\documentclass[11pt]{amsart}   	
\usepackage[margin=1in]{geometry}
\usepackage{subfigure}					
\usepackage{amsmath}
\usepackage{amssymb, amsfonts}
\usepackage[nobysame]{amsrefs}
\usepackage{amsthm}
\usepackage{hyperref}
\usepackage{mathtools}
\usepackage{tikz}
\usetikzlibrary{cd}
\usetikzlibrary{decorations.markings}
\usepackage[shortlabels]{enumitem}
\usepackage{comment}

\usepackage[noabbrev,capitalise,nameinlink]{cleveref}

\definecolor{lavender}{rgb}{0.4,0,1.0}

\hypersetup{colorlinks=true, citecolor=lavender, linkcolor=lavender,urlcolor=lavender}

\newtheorem{theorem}{Theorem}[section]
\newtheorem{proposition}[theorem]{Proposition}
\newtheorem{corollary}[theorem]{Corollary}

\newtheorem{lemma}[theorem]{Lemma}

\theoremstyle{definition}

\newtheorem{remark}[theorem]{Remark}
\newtheorem{example}[theorem]{Example}

\newcommand{\s}{\mathtt{s}}

\newcommand{\LL}{\mathcal{L}}

\newcommand{\Orn}{\mathrm{Orn}}
\newcommand{\PT}{\mathrm{PT}}
\newcommand{\MN}{\mathrm{MN}}
\newcommand{\TT}{\mathsf{T}}
\newcommand{\NN}{\mathcal{N}}
\newcommand{\VV}{\mathsf{V}}
\newcommand{\Broom}{\mathsf{Broom}}
\newcommand{\wo}{w_{\circ}}
\newcommand{\Weak}{\mathrm{Weak}}
\newcommand{\OO}{\mathcal{O}}
\newcommand{\Inv}{\mathrm{Inv}}

\newcommand{\dfn}[1]{\textcolor{blue}{\emph{#1}}}

\title{Operahedron Lattices}

\author[]{Colin Defant}
\address[]{Department of Mathematics, Harvard University, Cambridge, MA 02139, USA}
\email{{\href{mailto:colindefant@gmail.com}{colindefant@gmail.com}}}

\author[]{Andrew Sack}
\address[]{Department of Mathematics, University of California, Los Angeles, CA 90095, USA}
\email{{\href{mailto:andrewsack@math.ucla.edu}{andrewsack@math.ucla.edu}}}

%\usepackage[backend=bibtex]{biblatex}
%\addbibresource{main.bib}

\begin{document}

%\keywords{associahedron, permutohedron, operahedron, lattice, semidistributive} 

\begin{abstract}
Laplante-Anfossi associated to each rooted plane tree a polytope called an \emph{operahedron}. He also defined a partial order on the vertex set of an operahedron and asked if the resulting poset is a lattice. We answer this question in the affirmative, motivating us to name Laplante-Anfossi's posets \emph{operahedron lattices}. The operahedron lattice of a chain with $n+1$ vertices is isomorphic to the $n$-th Tamari lattice, while the operahedron lattice of a claw with $n+1$ vertices is isomorphic to $\mathrm{Weak}(\mathfrak S_n)$, the weak order on the symmetric group $\mathfrak S_n$. We characterize semidistributive operahedron lattices and trim operahedron lattices. Let $\Delta_{\mathrm{Weak}(\mathfrak S_n)}(w_\circ(k,n))$ be the principal order ideal of $\mathrm{Weak}(\mathfrak S_n)$ generated by the permutation ${w_\circ(k,n)=k(k-1)\cdots 1(k+1)(k+2)\cdots n}$. Our final result states that the operahedron lattice of a broom with $n+1$ vertices and $k$ leaves is isomorphic to the subposet of $\mathrm{Weak}(\mathfrak S_n)$ consisting of the preimages of $\Delta_{\mathrm{Weak}(\mathfrak S_n)}(w_\circ(k,n))$ under West's stack-sorting map; as a consequence, we deduce that this subposet is a semidistributive lattice.  
\end{abstract}

\maketitle

\section{Introduction}\label{sec:intro}

Let $\PT_n$ denote the set of rooted plane trees with $n+1$ vertices. We view a tree $\TT\in\PT_n$ with vertex set $\VV$ as a poset $(\VV,\leq_\TT)$, where the partial order $\leq_\TT$ is defined so that every non-root vertex covers exactly one element. Thus, the root vertex is the unique minimal element of $\TT$. A \dfn{tube} of $\TT$ is a set of vertices that induces a connected subgraph of $\TT$. Every tube $\tau$ has a unique minimal element under $\leq_\TT$ that we call the \dfn{root} of $\tau$. Two sets are said to be \dfn{nested} if one is contained in the other. A \dfn{nesting} of $\TT$ is a collection $\mathcal N$ of tubes of $\TT$ such that 
\begin{itemize}
\item $\VV\in\NN$;
\item each tube in $\NN$ has cardinality at least $2$; 
\item any two tubes in $\mathcal N$ are either nested or disjoint.
\end{itemize} Let ${\bf N}(\TT)$ be the poset of nestings of $\TT$, ordered by containment, and let $\widehat{\bf N}(\TT)$ be the poset obtained from ${\bf N(\TT)}$ by appending a new element that is greater than every element of ${\bf N(\TT)}$. 

Let $\TT\in\PT_n$. We say $\TT$ is a \dfn{chain} if $\leq_\TT$ is a total order. We say $\TT$ is a \dfn{claw} if every non-root vertex of $\TT$ covers the root of $\TT$. Let $\Broom_{k,n}\in\PT_n$ denote the rooted plane tree obtained by identifying the unique maximal element of the chain in $\PT_{n-k+1}$ with the root of the claw in $\PT_{k+1}$; we call $\Broom_{k,n}$ a \dfn{broom}. See \cref{fig:chain_claw_broom}.

\begin{figure}[ht]
  \begin{center}\includegraphics[height=3.854cm]{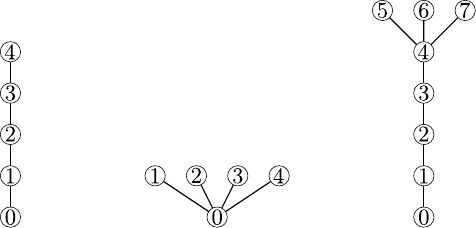}
  \end{center}
  \caption{On the left is the chain in $\PT_4$. In the middle is the claw in $\PT_4$. On the right is the broom $\Broom_{3,7}\in\PT_7$. We have identified the vertex set of each tree in $\PT_n$ with $\{0,1,\ldots,n\}$ in a manner such that $0,1,\ldots,n$ is the preorder traversal of the tree. }\label{fig:chain_claw_broom}
\end{figure}

In a recent breakthrough, Masuda, Thomas, Tonks, and Vallette \cite{Masuda} found coherent cellular approximations of the diagonals of associahedra. This allowed them to define a topological cellular operad structure on the Loday realizations of associahedra. Motivated by these results, Laplante-Anfossi \cite{laplante2022diagonal} defined an \dfn{operahedron} of a rooted plane tree $\TT$ to be a polytope whose face lattice is isomorphic to the dual of $\widehat {\bf N}(\TT)$. Operahedra are special examples of \emph{poset associahedra}, which Galashin recently introduced using tubings of an arbitrary poset \cite{galashin2021poset}. 
Operahedra simultaneously generalize associahedra and permutohedra; indeed, associahedra are just operahedra of chains, while permutohedra are operahedra of claws. Laplante-Anfossi constructed Loday realizations of operahedra and found coherent cellular approximations of their diagonals. He then defined a topological cellular operad structure on these Loday realizations. 

\begin{figure}
\begin{center}\includegraphics[height=8.933cm]{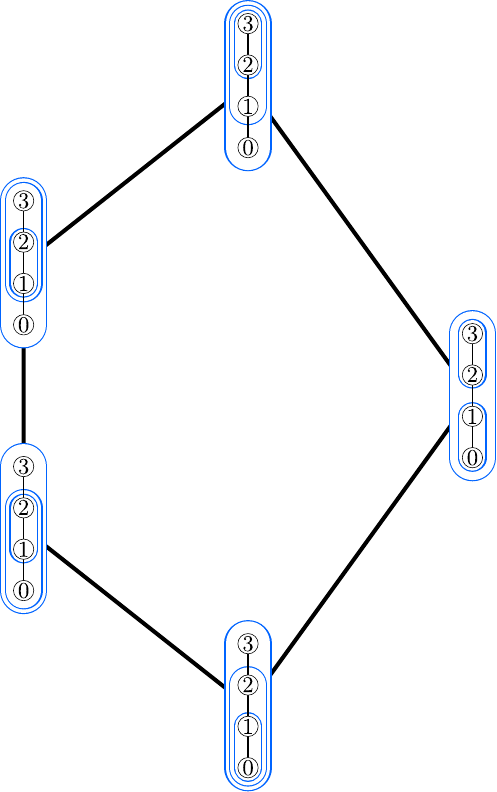}\qquad\qquad\qquad\qquad\includegraphics[height=8cm]{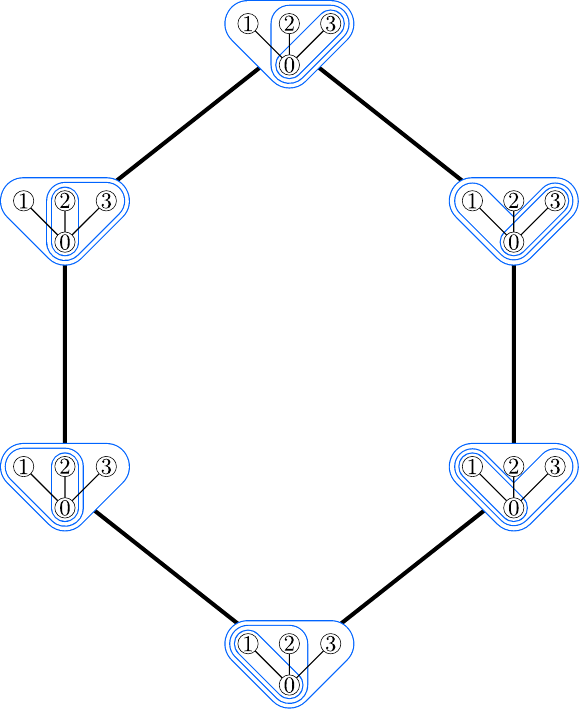}
  \end{center}
  \caption{On the left is the operahedron lattice of the chain in $\PT_3$, which is isomorphic to the third Tamari lattice. On the right is the operahedron lattice of the claw in $\PT_3$, which is isomorphic to the weak order on $\mathfrak S_3$. Each tube is circled in blue. }\label{fig:Tamari_weak} 
\end{figure}

The \dfn{preorder traversal} of a rooted plane tree $\TT$ is the ordering of the vertices of $\TT$ obtained by reading the root first and then reading the subtrees of the root, each in preorder, from left to right. For example, every tree appearing in \cref{fig:chain_claw_broom,fig:Tamari_weak,fig:operahedron1} has its vertex set identified with $\{0,1,\ldots,n\}$ (for the appropriate $n$) so that the preorder traversal is $0,1,\ldots,n$. 

Let $\TT\in\PT_n$, and let us identify the vertex set of $\TT$ with $\{0,1,\ldots,n\}$ so that $0,1,\ldots,n$ is the preorder traversal of $\TT$. A \dfn{maximal nesting} of $\TT$ is a nesting of $\TT$ that has $n$ tubes; in other words, it is a maximal element of ${\bf N}(\TT)$. Maximal nestings correspond to the vertices of the operahedron of $\TT$. Let $\MN(\TT)$ denote the set of maximal nestings of $\TT$. Say two maximal nestings of $\TT$ are \dfn{adjacent} if they correspond to vertices that are adjacent in the 1-skeleton of the operahedron of $\TT$. Suppose $\NN$ and $\NN'$ are adjacent maximal nestings of $\TT$. Then there exist $\tau\in\NN\setminus\NN'$ and $\tau'\in\NN'\setminus\NN$ such that $\NN\setminus\{\tau\}=\NN'\setminus\{\tau'\}$. Moreover, the following are equivalent:
\begin{enumerate}[(i)]
\item Every element of $\tau\setminus\tau'$ is less than every element of $\tau'\setminus\tau$ in $\mathbb Z$.
\item Some element of $\tau\setminus\tau'$ is less than some element of $\tau'\setminus\tau$ in $\mathbb Z$.
\end{enumerate} 
If these equivalent conditions hold, then we write $\NN\lessdot \NN'$. This defines the cover relations of a partial order $\leq$ on $\MN(\TT)$, which allows us to view $\MN(\TT)$ as a poset. If $\TT$ is a chain, then $\MN(\TT)$ is isomorphic to the $n$-th Tamari lattice; if $\TT$ is a claw, then $\MN(\TT)$ is isomorphic to the weak order on the symmetric group $\mathfrak S_n$ (see \cref{fig:Tamari_weak,fig:operahedron1}). 

\begin{figure}
\begin{center}\includegraphics[width=\linewidth]{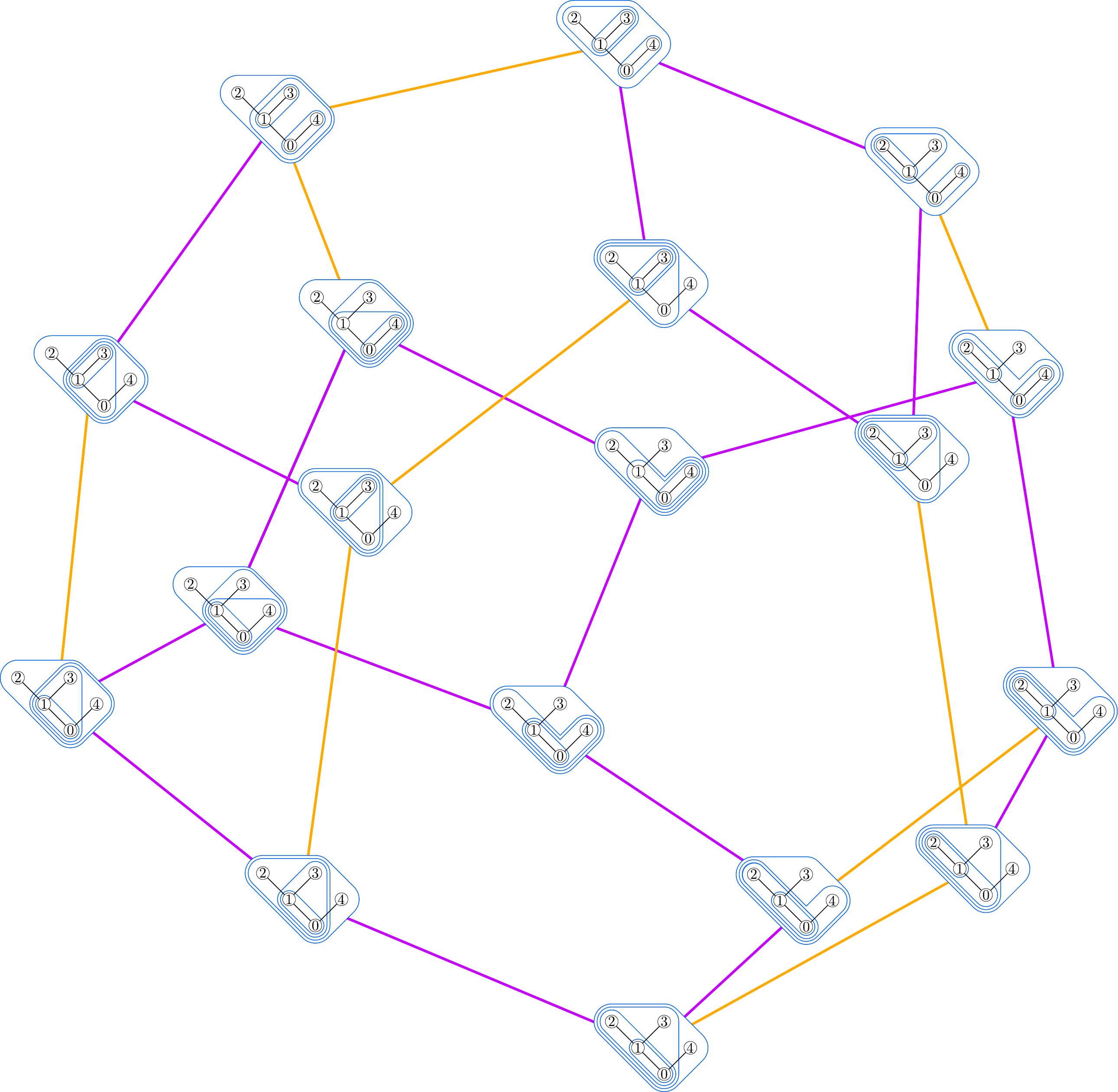}
  \end{center}
  \caption{The operahedron lattice of the tree $\begin{array}{l}\includegraphics[height=0.5cm]{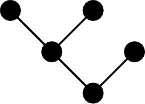}\end{array}$. We have identified the vertex set of the tree with the set $\{0,1,2,3,4\}$ so that $0,1,2,3,4$ is the preorder traversal. Each tube is circled in blue. Edges of the lattice corresponding to permutohedron moves are purple, while edges corresponding to associahedron moves are orange.}\label{fig:operahedron1}
\end{figure}

Laplante-Anfossi introduced the posets $\MN(\TT)$ and posed the problem of determining whether they are always lattices. Our first main result answers this question in the affirmative. 

\begin{theorem}\label{thm:lattice}
For every rooted plane tree $\TT$, the poset $\MN(\TT)$ is a lattice. 
\end{theorem}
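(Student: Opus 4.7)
The plan is to prove that $\MN(\TT)$ is a lattice by constructing an explicit meet for every pair of maximal nestings.

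I would begin by showing that $\MN(\TT)$ has a unique minimum $\widehat{0}$, identified as the ``preorder-leftmost'' maximal nesting, built by a greedy recursive rule on $\TT$. The cover-relation condition---every element of $\tau \setminus \tau'$ is less than every element of $\tau' \setminus \tau$ in $\mathbb{Z}$ (equivalently, in preorder)---immediately rules out any cover relation descending from $\widehat{0}$, so it is minimal; uniqueness follows from the greedy characterization. A symmetric argument yields a unique maximum $\widehat{1}$.

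Next, I would construct $\NN_1 \wedge \NN_2$ for arbitrary $\NN_1, \NN_2 \in \MN(\TT)$ by recursion on $\TT$. At the root $0$, the tubes of $\NN_i$ containing $0$ form a chain of connected subgraphs whose successive differences encode an ordered grouping of the children-subtrees of $0$ into blocks. These two orderings (for $i=1,2$) can be combined via a meet operation in a smaller, known lattice---specifically, a weak order on ordered set partitions of the children of $0$. Then, for each subtree $\TT'$ rooted at a child of $0$ whose structure has been ``separated off'' in the root-level meet, I would recursively compute the meet inside $\MN(\TT')$ using the induction hypothesis. The output is a candidate meet $\NN_1 \wedge \NN_2$. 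I would then verify by induction on the number of vertices of $\TT$ that this construction produces a valid maximal nesting and satisfies the universal property of the meet: it is a common lower bound of $\NN_1$ and $\NN_2$, and any common lower bound $\NN^\ast$ is below it. The latter follows by observing that $\NN^\ast$ projects to a common lower bound at the root level (in the smaller ordered-set-partition lattice) and to common lower bounds in each subtree lattice.

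The main obstacle will be handling the coupling between the root-level meet and the recursive subtree meets. The choice of how to group children-subtrees at the root level can constrain what nestings are valid within each subtree; two children-subtrees grouped into a common root-containing tube can create dependencies that are not purely internal to either subtree. Carefully reconciling these constraints---verifying that the recursive construction respects both the nesting condition and the universal property---will require a delicate case analysis of cover relations and, most likely, a specialized lemma governing how tubes cross between the root level and the subtree level. A plausible alternative, should the recursive approach prove too entangled, is to instead exhibit a surjective order-preserving map from $\Weak(\mathfrak S_n)$ to $\MN(\TT)$ whose fibers form a lattice congruence, generalizing the sylvester-congruence proof of the Tamari case; the main difficulty would then shift to identifying the correct congruence on $\Weak(\mathfrak S_n)$ for a general plane tree.
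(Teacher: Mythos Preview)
Your proposed recursive decomposition has a genuine gap. You claim that the tubes of a maximal nesting $\NN$ containing the root $0$ form a chain whose successive differences ``encode an ordered grouping of the children-subtrees of $0$ into blocks,'' and that the remainder of $\NN$ then factors through the lattices $\MN(\TT')$ for subtrees $\TT'$ rooted at children of $0$. This is false already when $\TT$ is a chain. If $\TT$ is the path $0\text{--}1\text{--}2\text{--}3$, then $0$ has a single child $1$, so your root-level ordered set partition is trivial, and your recursion would force $\MN(\TT)\cong\MN(\TT')$ where $\TT'$ is the chain on $\{1,2,3\}$; but $|\MN(\TT)|=5$ while $|\MN(\TT')|=2$. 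The point is that the successive differences of the root-containing tubes are not unions of full children-subtrees: in the nesting $\{\{0,1\},\{0,1,2\},\{0,1,2,3\}\}$ the differences are $\{1\},\{2\},\{3\}$, which cut across the single child-subtree $\{1,2,3\}$. In general each difference is an arbitrary tube hung somewhere in $\TT^\times$, not a block of children. So neither the root-level data nor the subtree data is what you describe, and there is no clean product decomposition along children of the root from which to assemble a meet.

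The paper takes a different route. It introduces an auxiliary poset $\Theta(\TT^\times)$ whose elements are pairs $(\lambda,\varrho)$ consisting of a linear extension $\lambda$ of $\TT^\times$ and an \emph{ornamentation} $\varrho$ (one tube hung at each non-root vertex, pairwise nested or disjoint), subject to the compatibility that each ornament occurs as a consecutive factor of $\lambda$. It proves $\MN(\TT)\cong\Theta(\TT^\times)$ via an explicit bijection, and then shows $\Theta(\TT^\times)$ is a lattice using the Bj\"orner--Edelman--Ziegler criterion: it suffices to check that any two elements covering a common element have a join. This reduces to a three-case analysis (two permutohedron moves, two associahedron moves, or one of each), in each of which the join is written down explicitly in the $(\lambda,\varrho)$ coordinates. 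The embedding into $\LL(\TT^\times)\times\OO(\TT^\times)$ is what replaces your attempted recursion, and it is precisely the device that untangles the coupling you identified as the main obstacle. Your alternative congruence idea is closer in spirit to this, but note that $\Theta(\TT^\times)$ is not a quotient of $\Weak(\mathfrak S_n)$: the first coordinate alone does not determine the element, so a straightforward sylvester-type congruence will not suffice.
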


In light of \cref{thm:lattice}, we call $\MN(\TT)$ an \dfn{operahedron lattice}. 

Let us say a rooted plane tree $\TT$ \dfn{contains} a rooted plane tree $\TT'$ if $\TT'$ can be obtained from $\TT$ by contracting edges. The following result is a useful tool for understanding the more refined structural properties of operahedron lattices.

\begin{proposition}\label{prop:interval}
Let $\TT$ and $\TT'$ be rooted plane trees. If $\TT$ contains $\TT'$, then $\MN(\TT')$ is isomorphic to an interval of $\MN(\TT)$. 
\end{proposition}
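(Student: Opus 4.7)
My plan is to construct an explicit order embedding $\Phi \colon \MN(\TT') \to \MN(\TT)$ and show that its image is an interval. By induction on the number of contracted edges (since an interval of an interval is itself an interval), it suffices to treat the case where $\TT'$ is obtained from $\TT$ by contracting a single edge $\{u,v\}$ with $v$ a child of $u$. Set $\tau := \{u,v\}$, a tube of $\TT$, and identify $\VV(\TT')$ with $\VV(\TT) \setminus \{v\}$ so that the merged vertex of $\TT'$ plays the role of $u$. For each tube $\sigma$ of $\TT'$, let $\tilde\sigma$ denote its \emph{lift}: adjoin $v$ to $\sigma$ if $u \in \sigma$, and leave $\sigma$ unchanged otherwise. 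Each $\tilde\sigma$ is a tube of $\TT$, and I define
\[
\Phi(\NN') := \{\tilde\sigma : \sigma \in \NN'\} \cup \{\tau\}.
\]
A direct count gives $|\Phi(\NN')| = |\VV(\TT)|-1$, and the tubes in $\Phi(\NN')$ are pairwise nested or disjoint, so $\Phi(\NN') \in \MN(\TT)$. Moreover, $\Phi$ is a bijection onto $S := \{\NN \in \MN(\TT) : \tau \in \NN\}$, with inverse obtained by deleting $\tau$ and identifying $u$ with $v$ in each remaining tube; the nested-or-disjoint requirement with $\tau$ ensures that any other tube containing $v$ also contains $u$, so this identification is valid.

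Next I would verify that $\Phi$ is an order embedding. The preorder labelings of $\TT$ and $\TT'$ are related by a predictable shift: a vertex $w \neq v$ has the same label in $\TT$ and $\TT'$ when its $\TT$-label does not exceed that of $u$, and a $\TT'$-label smaller by one otherwise. This shift preserves the relative order in $\mathbb{Z}$ of any two vertices of $\VV(\TT')$. Given a cover $\NN' \lessdot \NN''$ in $\MN(\TT')$ obtained by swapping $\sigma$ for $\sigma'$, the images $\Phi(\NN')$ and $\Phi(\NN'')$ differ only by the swap $\tilde\sigma \leftrightarrow \tilde{\sigma'}$ since $\tau$ lies in both. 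Condition (ii) of the excerpt then transfers between $\TT'$ and $\TT$ via the shift, with a brief case analysis handling the subcase in which $u$ lies in the symmetric difference of $\sigma$ and $\sigma'$ (so that $v$ also contributes to the symmetric difference of the lifts). The reverse direction, deducing a cover in $\MN(\TT')$ from a cover in $\MN(\TT)$ whose endpoints lie in $S$, is analogous.

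The main obstacle is to show that $S$ is actually an interval of $\MN(\TT)$, not merely a subposet isomorphic to $\MN(\TT')$. Because $\MN(\TT')$ has a unique minimum $\hat 0$ and maximum $\hat 1$ by \cref{thm:lattice}, $S$ has unique minimum $\Phi(\hat 0)$ and maximum $\Phi(\hat 1)$, so the task reduces to convexity: any $\NN \in \MN(\TT)$ with $\Phi(\hat 0) \leq \NN \leq \Phi(\hat 1)$ lies in $S$. I would argue this via saturated chains: supposing $\tau \notin \NN$, the first cover along a saturated chain $\Phi(\hat 0) = K_0 \lessdot \cdots \lessdot K_r = \NN$ that omits $\tau$ must swap $\tau$ for some tube $\tau^{*}$ with every element of $\tau \setminus \tau^{*}$ less than every element of $\tau^{*} \setminus \tau$, while the first cover along a saturated chain from $\NN$ up to $\Phi(\hat 1)$ that reinstates $\tau$ swaps some tube $\tau^{**}$ for $\tau$ with the opposite strict inequality. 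Exploiting the explicit form of $\Phi(\hat 0)$ and $\Phi(\hat 1)$—their non-$\tau$ tubes are lifts of tubes from the fixed max nestings $\hat 0, \hat 1$—together with the compatibility constraints that $\tau^{*}$ and $\tau^{**}$ must each be pairwise nested or disjoint with the surrounding lifted tubes, one shows that these two inequalities cannot both be realized, yielding a contradiction. A cleaner conceptual alternative, which I expect to make the argument crisper, is to invoke the polytope structure of the operahedron: $S$ is the vertex set of the face dual to $\tau$, and this face is itself an operahedron whose induced order from $\MN(\TT)$ agrees with $\MN(\TT')$ under $\Phi$, so $S$ is automatically an interval by general properties of the face lattice.
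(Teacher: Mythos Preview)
Your reduction to a single edge contraction and the bijection $\Phi$ onto $S=\{\NN\in\MN(\TT):\tau\in\NN\}$ match the paper exactly (the paper writes $Q$ and $\tau^*$), and your order-embedding check via the preorder label shift is fine. The gap is in the convexity of $S$. In your saturated-chain argument, the tube $\tau^{*}$ that replaces $\tau$ at the removal step lives in some intermediate nesting $K_i$, not in $\Phi(\hat 0)$; there is no reason it must be nested-or-disjoint with the lifted tubes of $\hat 0$ or $\hat 1$, so the ``compatibility constraints with the surrounding lifted tubes'' you invoke are simply unavailable. The local inequalities you can extract---say, that every element of $\tau^{*}\setminus\tau$ exceeds something, and every element of $\tau^{**}\setminus\tau$ is below something---concern two unrelated tubes sitting in different nestings on different chains, and they do not contradict each other. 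Your polytope alternative does not close the gap either: that the vertex set of a face is an interval in a chosen orientation of the $1$-skeleton is not a ``general property of the face lattice''; it holds, for instance, when the orientation is induced by a generic linear functional, but you would then need to establish this for the Laplante-Anfossi order, which is essentially the content of what you are trying to prove.

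The paper's convexity argument instead passes through the isomorphism $\Psi\colon\MN(\TT)\to\Theta(\TT^\times)\subseteq\LL(\TT^\times)\times\OO(\TT^\times)$ of \cref{prop:isomorphism}. Under $\Psi$, the condition $\{u,u'\}\in\NN$ translates into three conditions on $(\lambda_\NN,\varrho_\NN)$: that $u'\in\varrho_\NN(u)$, that $\varrho_\NN(u')=\{u'\}$, and that $u$ appears immediately before $u'$ in $\lambda_\NN$. These hold at both $\Psi(\NN_{\min})$ and $\Psi(\NN_{\max})$, and each is preserved by the product-order sandwich $\Psi(\NN_{\min})\leq\Psi(\NN)\leq\Psi(\NN_{\max})$ (coordinatewise containment of ornaments for the first two, containment of inversion sets for the third). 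The recursive description of $\Psi^{-1}$ from \cref{lem:bijection_Psi} then forces $\{u,u'\}\in\NN$. The idea you are missing is to pass to coordinates in which membership in $S$ becomes a monotone condition.
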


Using \cref{prop:interval} and the fact that intervals of distributive lattices are distributive, it is not difficult to check by hand that $\MN(\TT)$ is distributive if and only if $n\leq 2$. This characterization of distributivity is not too interesting, so we are naturally led to consider the family of \emph{semidistributive} lattices, which contains a more eclectic array of examples. Upon inspecting \cref{fig:operahedron1}, one can check directly that the operahedron lattice of the tree $\begin{array}{l}\includegraphics[height=0.5cm]{OperahedronPIC3}\end{array}$ is not semidistributive. Because intervals of semidistributive lattices are semidistributive, it follows from \cref{prop:interval} that the operahedron lattice of a tree that contains $\begin{array}{l}\includegraphics[height=0.5cm]{OperahedronPIC3}\end{array}$ cannot be semidistributive; we will prove that this is actually the only obstruction to semidistributivity. 

\begin{theorem}\label{thm:semidistributive}
Let $\TT$ be a rooted plane tree. The following are equivalent. 
\begin{enumerate}[label={\upshape(\Roman*)}, align=left, widest=iii, leftmargin=*]
\item\label{S1} The operahedron lattice $\MN(\TT)$ is semidistributive. 
\item\label{S2} The operahedron lattice $\MN(\TT)$ is meet-semidistributive. 
\item\label{S3} The operahedron lattice $\MN(\TT)$ is join-semidistributive. 
\item\label{S4} The tree $\TT$ does not contain the tree $\begin{array}{l}\includegraphics[height=0.5cm]{OperahedronPIC3}\end{array}$. 
\item\label{S5} Every vertex of $\TT$ that is not in the rightmost branch of $\TT$ is covered by at most $1$ element of~$\TT$. 
\end{enumerate}
\end{theorem}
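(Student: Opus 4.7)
The strategy is to establish the easy implications first, then reduce the bulk of the work to the single ``bad'' tree $\TT_0 = \begin{array}{l}\includegraphics[height=0.5cm]{OperahedronPIC3}\end{array}$ and an inductive/structural argument.

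\textbf{Trivial and easy parts.} The implications \ref{S1}$\Rightarrow$\ref{S2} and \ref{S1}$\Rightarrow$\ref{S3} are definitional. The equivalence \ref{S4}$\Leftrightarrow$\ref{S5} I would prove combinatorially: one direction is to check that having a non-rightmost-branch vertex of out-degree $\geq 2$ is preserved under edge contraction (so a tree containing $\TT_0$ has this property, since $\TT_0$ does); the other direction is a small construction that, given a bad vertex $v$ of $\TT$ with at least two children not in the rightmost branch of $v$, contracts all remaining edges to exhibit $\TT_0$ as a contraction of $\TT$. For \ref{S2}$\Rightarrow$\ref{S4} and \ref{S3}$\Rightarrow$\ref{S4}, I take the contrapositive: if $\TT$ contains $\TT_0$, then by \cref{prop:interval} the lattice $\MN(\TT_0)$ sits as an interval in $\MN(\TT)$. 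Inspecting \cref{fig:operahedron1}, I would pinpoint three explicit maximal nestings $\NN,\NN',\NN''$ of $\TT_0$ witnessing $\NN\wedge\NN' = \NN\wedge\NN''\ne\NN\wedge(\NN'\vee\NN'')$ and a dual triple witnessing failure of join-semidistributivity. Since intervals of meet-semidistributive (resp.\ join-semidistributive) lattices are meet-semidistributive (resp.\ join-semidistributive), the failure propagates to $\MN(\TT)$.

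\textbf{Main implication \ref{S5}$\Rightarrow$\ref{S1}.} Here $\TT$ has the following shape: the rightmost branch $r_0<\cdots<r_m$ carries all branching, and off each $r_i$ there hang some (possibly empty) chain subtrees. The plan is to proceed by induction on $|\VV|$. If $\TT$ is a chain or a claw, \cref{fig:Tamari_weak} identifies $\MN(\TT)$ with the Tamari lattice or the weak order on $\mathfrak S_n$, both of which are well known to be semidistributive; more generally, the broom case is handled by the stack-sorting result described in the abstract, which realizes $\MN(\Broom_{k,n})$ as a subposet of $\Weak(\mathfrak S_n)$ inheriting semidistributivity. For the inductive step, I would choose a leaf $\ell$ of $\TT$ that lies at the tip of a non-rightmost hanging chain (if any) and let $\TT' = \TT\setminus\{\ell\}$, which still satisfies \ref{S5}. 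The idea is to express each maximal nesting $\NN$ of $\TT$ via the pair $(\NN\cap{\bf N}(\TT'),\,\tau_\ell)$, where $\tau_\ell$ records the unique tube of $\NN$ of the form ``ancestors of $\ell$ up to some cutoff,'' and to show that the inclusion and the forgetful map relating $\MN(\TT)$ and $\MN(\TT')$ are compatible with meets and joins. Concretely, I would describe $\NN\vee\NN'$ and $\NN\wedge\NN'$ by the standard ``tube-saturation'' procedure (intersect or union and close up under forced nestings) and check, using \ref{S5}, that this procedure commutes with deleting the leaf $\ell$; then the semidistributive laws for $\MN(\TT)$ reduce to the semidistributive laws for $\MN(\TT')$ together with a one-dimensional check on the coordinate $\tau_\ell$.

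\textbf{Main obstacle.} The delicate point is controlling the meet and join of two maximal nestings when the hanging chains interact with the rightmost-branch branching. Because tubes may combine a piece of a chain subtree with one or more $r_i$'s, the $\tau_\ell$-coordinate is not independent of the rest of $\NN$. Showing that condition \ref{S5} is exactly what guarantees enough independence — equivalently, showing that the obstruction detected in $\MN(\TT_0)$ is the \emph{only} local obstruction — is the heart of the argument, and I would expect to formalize it by identifying the canonical join representation of an arbitrary maximal nesting of $\TT$ in terms of ``minimal tube swaps'' and verifying directly that distinct canonical joinands remain incomparable, which by the Freese--Je{\v z}ek--Nation criterion yields join-semidistributivity (and, dually, meet-semidistributivity).
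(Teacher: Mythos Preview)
Your treatment of the easy implications is fine and matches the paper. The real problem is in your proposed proof of \ref{S5}$\Rightarrow$\ref{S1}.

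First, the base case is circular. You claim that for brooms ``the stack-sorting result described in the abstract $\ldots$ realizes $\MN(\Broom_{k,n})$ as a subposet of $\Weak(\mathfrak S_n)$ inheriting semidistributivity.'' But subposets of semidistributive lattices are not semidistributive in general (even sublattices need not be). In fact the paper runs this implication in the \emph{opposite} direction: it proves \cref{thm:semidistributive} first and then deduces, as a corollary of \cref{thm:semidistributive} and \cref{thm:stack-sorting}, that the stack-sorting preimage is semidistributive. So this cannot serve as a base case.

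Second, the inductive step is not an argument yet. Deleting a leaf $\ell$ does not give a clean product or interval decomposition of $\MN(\TT)$ over $\MN(\TT')$: the restriction $\NN\cap{\bf N}(\TT')$ of a maximal nesting $\NN$ need not be a maximal nesting of $\TT'$, the ``coordinate'' $\tau_\ell$ is not independent of the rest (as you yourself note), and you give no mechanism by which the semidistributive laws for $\MN(\TT)$ would reduce to those of $\MN(\TT')$ plus a one-dimensional check. The phrase ``check, using \ref{S5}, that this procedure commutes with deleting the leaf $\ell$'' hides the entire content of the theorem. Your fallback idea of exhibiting canonical join representations is a reasonable plan, but you have not said what the canonical joinands are or why \ref{S5} makes them work, so there is no argument there either.

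For comparison, the paper avoids induction altogether. It works in the isomorphic poset $\Theta(\TT^\times)\subseteq\LL(\TT^\times)\times\OO(\TT^\times)$ and uses Barnard's criterion: a finite lattice $L$ is meet-semidistributive iff for every cover $x\lessdot y$ the set $\nabla_L(x)\setminus\nabla_L(y)$ has a unique maximal element (and dually for join-semidistributivity). The proof then splits into four lemmas, one for each combination of (associahedron move / permutohedron move) and (meet / join), and in each case writes down an explicit pair $(\lambda^*,\varrho^*)$ or $(\lambda_*,\varrho_*)$ that serves as the required extremal element. Condition \ref{S5} is used only in the two permutohedron-move lemmas, precisely to guarantee that the relevant hanging subtree is a chain so that the construction goes through.
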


When $\MN(\TT)$ is semidistributive, our proof of \cref{thm:semidistributive} provides a description of its join-irreducible elements and its meet-irreducible elements (see \cref{rem:explicit_join-irr}).  

Another generalization of the family of distributive lattices is the family of \emph{trim} lattices, which was introduced by Thomas \cite{Thomas} (see also \cites{Ungarian,Semidistrim,RiSM} for several notable examples and remarkable properties of trim lattices). Our next result characterizes the rooted plane trees whose operahedron lattices are trim. 

\begin{theorem}\label{thm:trim}
Let $\TT$ be a rooted plane tree. The operahedron lattice $\MN(\TT)$ is trim if and only if the root of $\TT$ is covered by at most $2$ elements of $\TT$ and every non-root vertex in $\TT$ is covered by at most $1$ element of $\TT$. 
\end{theorem}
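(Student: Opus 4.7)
The plan is to prove the two directions of the equivalence separately. For necessity, I rely on \cref{prop:interval} and the result of Thomas that every interval of a trim lattice is trim; for sufficiency, I handle the chain case via the classical trimness of the Tamari lattice and treat the remaining case using \cref{thm:semidistributive} together with a direct trimness argument.

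\textbf{Necessity.} Assuming $\MN(\TT)$ is trim, I would identify two minimal ``forbidden'' trees whose operahedron lattices are not trim. The first is the claw in $\PT_3$, for which $\MN = \Weak(\mathfrak{S}_3)$ is the hexagon of length $3$ with $4$ join-irreducibles and $4$ meet-irreducibles, hence not extremal. The second is the tree in $\PT_3$ whose root has a unique child with two leaf children, for which a direct enumeration of maximal nestings shows $\MN$ is the $6$-element lattice consisting of two disjoint length-$3$ chains glued at both endpoints, again of length $3$ with $4$ join-irreducibles and $4$ meet-irreducibles. If the root of $\TT$ has $\geq 3$ children, then contracting all but three of the root's child-edges (and collapsing the chosen branches into single vertices) exhibits the first forbidden tree as a contraction of $\TT$; if some non-root vertex $v$ has $\geq 2$ children, then contracting everything except a root-to-$v$ path together with two of $v$'s child-edges produces the second. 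Either way, \cref{prop:interval} embeds a non-trim interval into $\MN(\TT)$, contradicting trimness.

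\textbf{Sufficiency.} If $\TT$ satisfies the conditions, then $\TT$ is either a chain or a ``$Y$-tree'' (root with two chain branches of lengths $a, b \geq 1$). In the chain case, $\MN(\TT)$ is the Tamari lattice $\mathrm{Tam}(n)$, which is trim by a classical result of Thomas. In the $Y$-tree case, I would proceed in three steps:
\begin{enumerate}[(a)]
\item Invoke \cref{thm:semidistributive} to deduce semidistributivity of $\MN(\TT)$, and extract explicit parametrizations of its join-irreducibles and meet-irreducibles from the proof of that theorem (see \cref{rem:explicit_join-irr}).
\item Establish extremality by verifying that the number of join-irreducibles, the number of meet-irreducibles, and the length of $\MN(\TT)$ all coincide.
\item Exhibit a left-modular maximal chain, for instance the chain obtained by inserting tubes in a canonical order driven by the preorder traversal of $\TT$.
\end{enumerate}

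The principal obstacle is step (c). Verifying that a proposed maximal chain consists of left-modular elements requires a delicate analysis of joins and meets of nestings in the $Y$-tree case, and \cref{prop:interval} only yields intervals rather than the full lattice, so induction on smaller $Y$-trees via edge contraction is not directly available. A potentially cleaner alternative is to construct an explicit lattice isomorphism between $\MN(\TT)$ for a $Y$-tree and a specific sublattice of a larger Tamari lattice, transferring trimness through a Catalan-style bijection between the maximal nestings of the $Y$-tree and suitable binary trees.
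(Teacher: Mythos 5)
Your treatment of necessity is essentially identical to the paper's: identify the two forbidden trees in $\PT_3$ (the claw and the tree whose root has a single child with two leaves), note that each yields a non-extremal hexagon, and then transfer via \cref{prop:interval} and Thomas's theorem that intervals of trim lattices are trim. This part is correct.

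The sufficiency argument has a genuine gap at step (c), which you yourself flag as the principal obstacle. You do not need to exhibit a left-modular maximal chain at all: the paper invokes a theorem of Thomas and Williams \cite{RiSM}*{Theorem~1.4} stating that every semidistributive extremal lattice is trim, together with the standard fact \cite{Free}*{Corollary~2.55} that a semidistributive lattice has $|\mathcal J_L| = |\mathcal M_L|$. Since \cref{thm:semidistributive} already gives semidistributivity in the $Y$-tree case, the entire problem reduces to your step (b): verifying extremality, i.e., $\mathrm{height}(\MN(\TT)) = |\mathcal J_{\MN(\TT)}|$. This the paper does concretely via the isomorphism with $\Theta(\TT^\times)$: using the explicit description of join-irreducibles from \cref{rem:explicit_join-irr}, the count comes out to $\binom{d}{2} + \binom{n-d}{2} + d(n-d) = \binom{n}{2}$ (with $d$ the length of the first branch), and a chain of length $\binom{n}{2}$ is built by concatenating a maximal chain of $\LL(\TT^\times)\times\{\varrho_{\min}\}$ with a maximal chain of $\{\lambda_{\max}\}\times\OO(\TT^\times)$. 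Without the semidistributive-extremal-implies-trim theorem, your proposal is incomplete, and your alternative route (a transfer bijection into a Tamari lattice) is speculative and not worked out. The missing idea, in short, is that left-modularity need not be checked by hand once semidistributivity is in hand.
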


\begin{remark}\label{rem:semidistrim}
Defant and Williams \cite{Semidistrim} introduced the family of \emph{semidistrim} lattices and proved that semidistributive lattices and trim lattices are semidistrim. We have checked that the operahedron lattice of the tree $\begin{array}{l}\includegraphics[height=0.5cm]{OperahedronPIC3}\end{array}$ (shown in \cref{fig:operahedron1}) is not semidistrim. Since intervals of semidistrim lattices are semidistrim \cite{Semidistrim}*{Theorem~7.8}, it follows that the operahedron lattice $\MN(\TT)$ of a tree $\TT$ is semidistrim if and only if the five equivalent conditions in \cref{thm:semidistributive} hold. 
\end{remark}

We denote by $\mathfrak S_n$ the $n$-th symmetric group, which consists of the permutations of the set ${[n]=\{1,\ldots,n\}}$. Let $\s\colon\mathfrak S_n\to\mathfrak S_n$ denote West's \emph{stack-sorting map} (see \cref{sec:stacks} for the definition of this map). West \cite{West} introduced this function as a deterministic analogue of Knuth's stack-sorting machine \cite{Knuth}. It has now been studied vigorously in combinatorics and computer science \cites{Bona_survey_2019,Branden,DefantCatalan,DefantCounting,DefantThesis,DefantEngenMiller} and has found striking connections with free probability theory \cite{DefantTroupes} and polyhedral geometry \cites{DefantFertilitopes,SackNguyen,LeeMitchellVindas}. 

Let $\Weak(\mathfrak S_n)$ denote the (right) weak order on $\mathfrak S_n$. For $w\in\mathfrak S_n$, let $\Delta_{\Weak(\mathfrak S_n)}(w)$ be the order ideal of $\Weak(\mathfrak S_n)$ generated by $w$, viewed as a subposet of $\Weak(\mathfrak S_n)$. Let \[\wo(k,n)=k(k-1)\cdots 1(k+1)(k+2)\cdots n\in \mathfrak S_n.\] Note that $\Delta_{\Weak(\mathfrak S_n)}(\wo(k,n))=\{u\in \mathfrak S_n:u(i)=i\text{ for all }k+1\leq i\leq n\}$. 

The final direction that we will explore connects West's stack-sorting map with operahedron lattices of brooms. This line of investigation was initiated by Nguyen and Sack \cite{SackNguyen}, who found that the $h$-vector of the operahedron of $\Broom_{k,n}$ counts permutations in $\s^{-1}(\Delta_{\Weak(\mathfrak S_n)}(\wo(k,n)))$ according to the descent statistic.  

\begin{theorem}\label{thm:stack-sorting}
Fix positive integers $k\leq n$. The operahedron lattice $\MN(\Broom_{k,n})$ is isomorphic to the subposet $\s^{-1}(\Delta_{\Weak(\mathfrak S_n)}(\wo(k,n)))$ of $\Weak(\mathfrak S_n)$. 
\end{theorem}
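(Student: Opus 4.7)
My plan is to construct an explicit bijection $\phi\colon\MN(\Broom_{k,n})\to\s^{-1}(\Delta_{\Weak(\mathfrak{S}_n)}(\wo(k,n)))$ and to show it is an isomorphism of posets. The broom $\Broom_{k,n}$ is built from a chain on vertices $0,1,\ldots,n-k$ to which a claw of $k$ leaves $n-k+1,\ldots,n$ is attached at the top vertex $n-k$. A maximal nesting $\NN$ of $\Broom_{k,n}$ encodes two pieces of information: a Tamari-type binary bracketing on the chain, and a description of the order in which the leaves peel off from the surrounding tubes. I would turn this data into a permutation by a recursive rule, for instance by repeatedly extracting an innermost tube of $\NN$ and recording a corresponding letter. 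When $\TT$ is the chain, this rule should specialize to the classical bijection between the Tamari lattice and $231$-avoiding permutations, and when $\TT$ is the claw, it should specialize to the bijection between orderings of the leaves and $\mathfrak{S}_n$.

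After $\phi$ is defined, I would verify that its image is precisely $\s^{-1}(\Delta_{\Weak(\mathfrak{S}_n)}(\wo(k,n)))=\{\pi\in\mathfrak{S}_n:\s(\pi)(i)=i\text{ for all }i>k\}$. The key observation is that the chain portion of $\NN$ forces the values $k+1,k+2,\ldots,n$ to appear in increasing order in the tail of $\s(\phi(\NN))$, whereas the leaf portion permits the first $k$ positions of $\s(\phi(\NN))$ to realize an arbitrary element of $\mathfrak{S}_k$. Injectivity can be read off from the recursive construction, and the cardinality equality proved by Nguyen and Sack \cite{SackNguyen} then upgrades $\phi$ to a bijection onto the claimed image.

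Finally, I would show that $\phi$ matches cover relations. A cover $\NN\lessdot\NN'$ in $\MN(\Broom_{k,n})$ exchanges a single tube $\tau\in\NN$ for a tube $\tau'\in\NN'$ satisfying $\max(\tau\setminus\tau')<\min(\tau'\setminus\tau)$. Splitting into cases according to whether $\tau,\tau'$ lie entirely in the chain, entirely in the claw, or straddle the vertex $n-k$, I would verify that every such flip corresponds to a cover relation in the induced subposet of $\Weak(\mathfrak{S}_n)$. The main obstacle is the converse: because $\s^{-1}(\Delta_{\Weak(\mathfrak{S}_n)}(\wo(k,n)))$ is only an induced subposet, it may in principle possess cover relations not realized by single adjacent transpositions in $\Weak(\mathfrak{S}_n)$, and one must rule out the appearance of such \emph{hidden} covers in the image of $\phi$. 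I expect this to be the most delicate step; it should follow from an explicit analysis of the fibers of $\s$ intersected with $\Delta_{\Weak(\mathfrak{S}_n)}(\wo(k,n))$, combined with the semidistributivity of $\MN(\Broom_{k,n})$ guaranteed by \cref{thm:semidistributive}, which places strong constraints on the possible cover structure.
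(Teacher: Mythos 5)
Your proposal correctly identifies the main difficulty---that $\s^{-1}(\Delta_{\Weak(\mathfrak S_n)}(\wo(k,n)))$ is an \emph{induced} subposet, so its cover relations need not be adjacent transpositions, and a cover-by-cover matching argument must rule out ``hidden'' covers---but it does not resolve that difficulty, and the appeal to \cref{thm:semidistributive} as a fix is unconvincing: semidistributivity of $\MN(\Broom_{k,n})$ constrains the lattice structure of the \emph{domain}, but says nothing directly about which pairs in the image subposet of $\Weak(\mathfrak S_n)$ form cover relations. Moreover, the bijection $\phi$ itself is left as a sketch (``a recursive rule extracting an innermost tube''), so there is nothing concrete to verify against.

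The paper sidesteps the cover-relation problem entirely. It first uses \cref{prop:isomorphism} to pass to the pair-poset $\Theta(\Broom_{k,n}^\times)$, where the order on the first coordinate (a linear extension) is inclusion of inversion sets, and the order on the second coordinate (an ornamentation) is coordinatewise set containment. It then defines an explicit map $\Omega$ and proves \cref{lem:Omega_inversions}: for each pair $1\leq i<j\leq n$, whether $(i,j)\in\Inv(\Omega(\lambda,\varrho))$ is equivalent either to $(n+1-j,n+1-i)\in\Inv(\lambda)$ (if $j\leq k$) or to $n+1-i\in\varrho(n+1-j)$ (if $j\geq k+1$). Since the target order is also inclusion of inversion sets, this one lemma makes both $\Omega$ and $\Omega^{-1}$ order-preserving \emph{globally}---the whole order relation, not just covers---and the hidden-cover issue never arises. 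This is the missing idea in your proposal: instead of matching Hasse diagrams locally, characterize the entire order relation on both sides in terms of a common inversion-type datum. If you want to push your approach through, you would need some analogue of \cref{lem:Omega_inversions}; without it, the final step of your outline would likely stall.
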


Our proof of \cref{thm:stack-sorting} constructs an explicit isomorphism. 

It is not obvious \emph{a priori} that the subposet $\s^{-1}(\Delta_{\Weak(\mathfrak S_n)}(\wo(k,n)))$ of $\Weak(\mathfrak S_n)$ is a lattice, yet combining \cref{thm:semidistributive,thm:stack-sorting} yields the following even stronger corollary. 

\begin{corollary}
Fix positive integers $k\leq n$. The subposet $\s^{-1}(\Delta_{\Weak(\mathfrak S_n)}(\wo(k,n)))$ of $\Weak(\mathfrak S_n)$ is a semidistributive lattice. 
\end{corollary}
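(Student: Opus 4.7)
The plan is to combine \cref{thm:lattice}, \cref{thm:semidistributive}, and \cref{thm:stack-sorting} directly. By \cref{thm:stack-sorting}, the subposet $\s^{-1}(\Delta_{\Weak(\mathfrak S_n)}(\wo(k,n)))$ of $\Weak(\mathfrak S_n)$ is isomorphic to the operahedron lattice $\MN(\Broom_{k,n})$, which is a lattice by \cref{thm:lattice}. It therefore remains only to check that this operahedron lattice is semidistributive, which I would do by verifying condition \ref{S5} of \cref{thm:semidistributive} for $\TT = \Broom_{k,n}$.

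By construction, $\Broom_{k,n}$ is a chain from the root up to a distinguished vertex $v$, with additional leaves attached at $v$. Every vertex other than $v$ is covered by at most one element (the internal chain vertices each cover the next chain vertex; the leaves cover nothing). The vertex $v$ is covered by several elements, but $v$ lies on the path from the root through the chain to the rightmost leaf of the broom, which is precisely the rightmost branch of $\Broom_{k,n}$. Hence every vertex of $\Broom_{k,n}$ not in the rightmost branch is covered by at most one element, so \ref{S5} holds. Combined with the observations in the preceding paragraph, this shows that $\s^{-1}(\Delta_{\Weak(\mathfrak S_n)}(\wo(k,n)))$ is a semidistributive lattice.

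I do not anticipate any substantial obstacle: the corollary is a direct consequence of the three theorems, with the only new content being the brief structural observation that the unique high-degree vertex of the broom sits on its rightmost branch, which is immediate from the definition of $\Broom_{k,n}$.
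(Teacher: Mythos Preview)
Your proposal is correct and takes essentially the same approach as the paper: the corollary is deduced immediately from \cref{thm:semidistributive} and \cref{thm:stack-sorting}, with the verification of condition \ref{S5} for $\Broom_{k,n}$ being the only small additional check. (Your separate invocation of \cref{thm:lattice} is harmless but unnecessary, since semidistributivity already presupposes the lattice property.)
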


The remainder of the paper is organized as follows. In \cref{sec:preliminaries}, we collect necessary notation and terminology pertaining to posets, lattices, permutations, and rooted plane trees. In \cref{sec:lattice}, we prove that $\MN(\TT)$ is isomorphic to a different poset $\Theta(\TT^\times)$, and we prove that this latter poset is a lattice, thereby establishing \cref{thm:lattice}. \cref{sec:intervals,sec:semidistributivity,sec:trim} are devoted to proving \cref{prop:interval}, \cref{thm:semidistributive}, and \cref{thm:trim}, respectively. In \cref{sec:stacks}, we discuss the stack-sorting map and prove \cref{thm:stack-sorting}. We conclude the paper with suggestions for future work in \cref{sec:conclusion}. 

\section{Preliminaries}\label{sec:preliminaries}
 
We assume basic familiarity with the theory of posets (partially ordered sets); a standard reference for this topic is \cite{Stanley}*{Chapter~3}. All posets in this article are assumed to be finite. 

Let $P$ be a poset with partial order $\leq$. For $x,y\in P$ with $x\leq y$, the \dfn{interval} from $x$ to $y$ is the set $[x,y]=\{z\in P:x\leq z\leq y\}$, which we view as a subposet of $P$. If $x<y$ and $[x,y]=\{x,y\}$, then we say $y$ \dfn{covers} $x$ and write $x\lessdot y$. For $x\in P$, let \[\Delta_P(x)=\{z\in P:z\leq x\}\quad\text{and}\quad\nabla_P(x)=\{z\in P:x\leq z\}.\] A \dfn{chain} of $P$ is a totally ordered subset of $P$. We often represent a chain of $P$ as a sequence ${x_0<x_1<\cdots< x_\ell}$; the \dfn{length} of this chain is the number $\ell$. The \dfn{height} of $P$, which we denote by $\mathrm{height}(P)$, is the maximum length of a chain of $P$. 

Suppose $P_1$ and $P_2$ are posets with partial orders $\leq_1$ and $\leq_2$, respectively. A map $\varphi\colon P_1\to P_2$ is said to be \dfn{order-preserving} if $\varphi(x)\leq_2\varphi(y)$ for all $x,y\in P_1$ satisfying $x\leq_1 y$. The \dfn{product} of $P_1$ and $P_2$ is the poset $P_1\times P_2$ whose underlying set is the cartesian product of $P_1$ and $P_2$ and whose order relation $\leq$ is such that $(x_1,x_2)\leq (y_1,y_2)$ if and only if $x_1\leq_1 y_1$ and $x_2\leq_2 y_2$. 

Let $P$ be a poset with $N$ elements. A \dfn{linear extension} of $P$ is a word $\sigma=\sigma(1)\cdots\sigma(N)$ such that ${P=\{\sigma(1),\ldots,\sigma(N)\}}$ and such that $\sigma(i)\leq\sigma(j)$ whenever $i\leq j$ in $\mathbb{Z}$. Let $\LL(P)$ denote the set of linear extensions of $P$. For $\sigma\in\LL(P)$, we consider the strict total order $\prec_\sigma$ on $P$ defined so that $x\prec_\sigma y$ if and only if $x$ precedes $y$ in $\sigma$. We also write $x\preceq_\sigma y$ to mean $x\prec_\sigma y$ or $x=y$. It will be helpful to extend this notation to subsets of $P$ as well. For $X,Y\subseteq P$, we write $X\prec_\sigma Y$ if $x\prec_\sigma y$ for all $x\in X$ and $y\in Y$. (Note that if $X\prec_\sigma Y$, then $X\cap Y=\emptyset$.) A \dfn{consecutive factor} of $\sigma$ is a word of the form $\sigma(a)\sigma(a+1)\cdots\sigma(b)$ for $1\leq a\leq b\leq N$. Given a set $X$, we write $\sigma\vert_X$ for the word obtained from $\sigma$ by deleting the entries from $P\setminus X$.   

We say a poset $L$ is a \dfn{lattice} if any two elements $x,y\in L$ have a greatest lower bound, which is called their \dfn{meet} and denoted by $x\wedge y$, and a least upper bound, which is called their \dfn{join} and denoted by $x\vee y$. Let $L$ be a lattice. An element of $L$ is \dfn{join-irreducible} if it covers exactly $1$ element of $L$. Dually, an element of $L$ is \dfn{meet-irreducible} if it is covered by exactly $1$ element of $L$. Let $\mathcal J_L$ and $\mathcal M_L$ be the set of join-irreducible elements of $L$ and the set of meet-irreducible elements of $L$, respectively. It is a basic fact that $\mathrm{height}(L)\leq|\mathcal J_L|$ and $\mathrm{height}(L)\leq|\mathcal M_L|$. We say $L$ is \dfn{extremal} if $\mathrm{height}(L)=|\mathcal J_L|=|\mathcal M_L|$. An element $u\in L$ is \dfn{left-modular} if for all $v,w\in L$ satisfying $v<w$, we have $(v\vee u)\wedge w=v\vee (u\wedge w)$. The lattice $L$ is called \dfn{left-modular} if it has a maximal chain whose elements are all left-modular. We say $L$ is \dfn{trim} if it is both extremal and left-modular. We say $L$ is \dfn{meet-semidistributive} if for all elements $x,y,z\in L$ satisfying $x\wedge y=x\wedge z$, we have $x\wedge y=x\wedge(y\vee z)$. We say $L$ is \dfn{join-semidistributive} if for all $x,y,z\in L$ satisfying $x\vee y=x\vee z$, we have $x\vee y=x\vee(y\wedge z)$. We say $L$ is \dfn{semidistributive} if it is both meet-semidistributive and join-semidistributive. 

If $L_1$ and $L_2$ are lattices, then their product $L_1\times L_2$ is a lattice, and 
\begin{equation}\label{eq:product_join-irr}
\mathcal J_{L_1\times L_2}=(\mathcal J_{L_1}\times\{\hat 0_2\})\sqcup(\{\hat 0_1\}\times \mathcal J_{L_2}),
\end{equation}
where $\hat 0_1$ and $\hat 0_2$ are the unique minimal elements of $L_1$ and $L_2$, respectively. 

We represent a permutation $w$ in the symmetric group $\mathfrak S_n$ via its \dfn{one-line notation} $w(1)\cdots w(n)$. An \dfn{inversion} of $w$ is a pair $(i,j)$ such that $1\leq i<j\leq n$ and $w^{-1}(j)<w^{-1}(i)$. Let $\Inv(w)$ denote the set of inversions of $w$. The \dfn{(right) weak order} on $\mathfrak S_n$ is the poset $\Weak(\mathfrak S_n)=(\mathfrak S_n,\leq)$, where for $w,w'\in\mathfrak S_n$, we have $w\leq w'$ if and only if $\Inv(w)\subseteq\Inv(w')$. It is known \cite{BjornerBrenti} that $\Weak(\mathfrak S_n)$ is a lattice. 

A \dfn{rooted plane tree} is a rooted tree in which the subtrees of each vertex are linearly ordered from left to right. As before, we let $\PT_n$ denote the set of rooted plane trees with $n+1$ vertices. We draw a rooted plane tree $\TT$ with vertex set $\VV$ as the Hasse diagram of the poset $(\VV,\leq_\TT)$ (so the root is at the bottom). Recall that for $v\in\VV^\times$, we write $\nabla_\TT(v)=\{v'\in\VV^\times:v\leq_\TT v'\}$. 

Let $\TT$ be a rooted plane tree with vertex set $\VV$ and root vertex ${\bf r}$. We write $\TT^\times$ for the poset obtained from $\TT$ by deleting ${\bf r}$. We can also view $\TT^\times$ as a forest graph (via its Hasse diagram). We let $\VV^\times=\VV\setminus\{{\bf r}\}$ denote the vertex set of $\TT^\times$. Define the \dfn{rightmost branch} of $\TT$ as follows. If $\TT$ has only $1$ vertex, then the rightmost branch of $\TT$ is $\{{\bf r}\}$. Now suppose $\TT$ has at least $2$ vertices, and let ${\bf r}'$ be the rightmost vertex that covers ${\bf r}$. Let $\TT_{{\bf r}'}$ be the subtree of $\TT$ with vertex set $\nabla_{\TT}({\bf r}')$. Then the rightmost branch of $\TT$ is $\{{\bf r}\}\cup\mathsf{B}_{{\bf r}'}$, where $\mathsf{B}_{{\bf r}'}$ is the rightmost branch of $\TT_{{\bf r}'}$. 

\section{The Lattice Property}\label{sec:lattice}

Let $\TT$ be a rooted plane tree with vertex set $\VV$ and root vertex ${\bf r}$. As above, let $\TT^\times$ be the forest poset with vertex set $\VV^\times=\VV\setminus\{{\bf r}\}$. 

It will be useful to distinguish two different types of cover relations in $\MN(\TT)$. Suppose $\NN\lessdot\NN'$ is a cover relation in $\MN(\TT)$, and let $\tau\in\NN\setminus\NN'$ and $\tau'\in\NN'\setminus\NN$ be such that $\NN\setminus\{\tau\}=\NN'\setminus\{\tau'\}$. If $\tau$ and $\tau'$ have the same root, then we say $\NN$ and $\NN'$ are related by a \dfn{permutohedron move}. If $\tau$ and $\tau'$ have different roots, then we say $\NN$ and $\NN'$ are related by an \dfn{associahedron move}. 

An \dfn{ornament} of $\TT^\times$ is a subset of $\VV^\times$ that induces a connected subgraph of $\TT^\times$. Every ornament $\mathfrak o$ has a unique minimal element $v_\mathfrak{o}$; we say $\mathfrak o$ is \dfn{hung} at $v_{\mathfrak{o}}$. Let $\Orn(\TT^\times)$ denote the set of ornaments of $\TT^\times$. An \dfn{ornamentation} of $\TT^\times$ is a function $\varrho\colon\VV^\times\to\Orn(\TT^\times)$ such that \begin{itemize}
\item for all $v\in \VV^\times$, the ornament $\varrho(v)$ is hung at $v$; 
\item for all $v,v'\in\VV^\times$, the ornaments $\varrho(v)$ and $\varrho(v')$ are either nested or disjoint.
\end{itemize} 
An ornament $\varrho(v)$ of an ornamentation $\varrho$ is \dfn{maximal} if there does not exist $v'\in\VV^\times\setminus\{v\}$ such that $\varrho(v)\subseteq\varrho(v')$.
Let $\OO(\TT^\times)$ denote the set of ornamentations of $\TT^\times$. There is a natural partial order $\leq$ on $\OO(\TT^\times)$ defined so that $\varrho\leq\varrho'$ if and only if $\varrho(v)\subseteq\varrho'(v)$ for all $v\in\VV^\times$. 

The poset $\OO(\TT^\times)$ has a unique minimal element $\varrho_{\min}$ and a unique maximal element $\varrho_{\max}$; they are defined so that \[\varrho_{\min}(v)=\{v\}\quad\text{and}\quad\varrho_{\max}(v)=\nabla_\TT(v)\] for all $v\in\VV^\times$.

\begin{proposition}\label{prop:ornamentation_lattice}
Let $\TT$ be a rooted plane tree. The poset $\OO(\TT^\times)$ is a lattice. 
\end{proposition}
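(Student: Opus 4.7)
The plan is to show that $\OO(\TT^\times)$ is a meet-semilattice with a maximum element, which for a finite poset is enough to conclude that it is a lattice. The maximum element $\varrho_{\max}$ has already been identified, so it suffices to construct meets explicitly.

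Given $\varrho_1,\varrho_2\in\OO(\TT^\times)$, I would define the candidate meet $\varrho$ by the componentwise intersection
\[\varrho(v)=\varrho_1(v)\cap\varrho_2(v)\qquad\text{for all }v\in\VV^\times.\]
Any lower bound of $\varrho_1$ and $\varrho_2$ must be contained componentwise in $\varrho$, so once I verify that $\varrho$ is itself an ornamentation, it will follow that $\varrho=\varrho_1\wedge\varrho_2$.

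To see that $\varrho(v)$ is an ornament hung at $v$, observe that $\varrho_1(v)$ and $\varrho_2(v)$ are both connected subgraphs of $\TT^\times$ containing $v$, and the intersection of two subtrees of a tree that share a vertex is itself a subtree; hence $\varrho(v)$ is a (nonempty) ornament, and $v$ remains its minimum under $\leq_\TT$ because $v$ is already the minimum of each $\varrho_i(v)$.

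The main obstacle is verifying that for distinct $v,v'\in\VV^\times$, the ornaments $\varrho(v)$ and $\varrho(v')$ are either nested or disjoint. I would split into cases based on how the pairs $\varrho_1(v),\varrho_1(v')$ and $\varrho_2(v),\varrho_2(v')$ relate. If either pair is disjoint, then $\varrho(v)$ and $\varrho(v')$ are immediately disjoint. Otherwise both pairs are nested; if they nest in the same direction, say $\varrho_1(v)\subseteq\varrho_1(v')$ and $\varrho_2(v)\subseteq\varrho_2(v')$, then $\varrho(v)\subseteq\varrho(v')$ and we are done. The delicate case is opposite nesting, say $\varrho_1(v)\subseteq\varrho_1(v')$ while $\varrho_2(v')\subseteq\varrho_2(v)$. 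I would rule this out using the ``hung at'' condition: the first containment gives $v\in\varrho_1(v')$, forcing $v'\leq_\TT v$, while the second gives $v'\in\varrho_2(v)$, forcing $v\leq_\TT v'$. Together these yield $v=v'$, contradicting our assumption, so this case does not arise. This completes the verification that $\varrho$ is an ornamentation, and hence that $\OO(\TT^\times)$ is a lattice.
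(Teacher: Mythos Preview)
Your proposal is correct and follows essentially the same approach as the paper: define the meet via componentwise intersection and then use the existence of $\varrho_{\max}$ to obtain joins. The paper's proof simply asserts that $(\varrho\wedge\varrho')(v)=\varrho(v)\cap\varrho'(v)$ defines the meet, whereas you supply the verification (connectedness of the intersection and the nesting condition) that the paper omits.
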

\begin{proof}
The meet operation on $\OO(\TT^\times)$ is given by \[(\varrho\wedge\varrho')(v)=\varrho(v)\cap\varrho'(v)\] for all $v,v'\in\VV^\times$. The join $\varrho\vee\varrho'$ is simply the meet of the set of upper bounds of $\rho$ and $\rho'$ (this set is nonempty because $\OO(\TT^\times)$ has a unique maximal element $\varrho_{\max}$). 
\end{proof} 

In light of \cref{prop:ornamentation_lattice}, we call $\OO(\TT^\times)$ an \dfn{ornamentation lattice}. 

Let us identify $\VV$ with $\{0,1,\ldots,n\}$ so that $0,1,\ldots,n$ is the preorder traversal of $\TT$. Then the vertex set of $\TT^\times$ is $[n]$, so the set $\LL(\TT^\times)$ of linear extensions of $\TT^\times$ can be viewed as a subset of $\mathfrak S_n$ (a linear extension is just the one-line notation of a permutation). It follows from \cite{BjornerWachs}*{Theorem~6.8} that $\LL(\TT^\times)$ is an interval in $\Weak(\mathfrak S_n)$. In particular, $\LL(\TT^\times)$ is a lattice. 

In order to prove that $\MN(\TT)$ is a lattice, we will first embed it as a subposet of the product lattice $\LL(\TT^\times)\times \OO(\TT^\times)$. Suppose $\NN\in\MN(\TT)$ is a maximal nesting of $\TT$. For each $v\in\VV^\times$, let $\varrho_{\NN}(v)$ be the largest tube in $\NN$ whose root is $v$; if no such tube exists, let $\varrho_{\NN}(v)=\{v\}$. This defines an ornamentation $\varrho_\NN\in\OO(\TT^\times)$. There is a unique linear extension $\widetilde\lambda_\NN\in\LL(\TT)$ such that for every $\tau\in\NN$, the elements of $\tau$ form a consecutive factor of $\widetilde\lambda_\NN$. The first entry in the word $\widetilde\lambda_\NN$ is the root vertex ${\bf r}$. Let $\lambda_\NN\in\LL(\TT^\times)$ be the linear extension of $\TT^\times$ obtained from $\widetilde\lambda_\NN$ by deleting ${\bf r}$.

\begin{figure}[ht]
  \begin{center}
  \includegraphics[width=\linewidth]{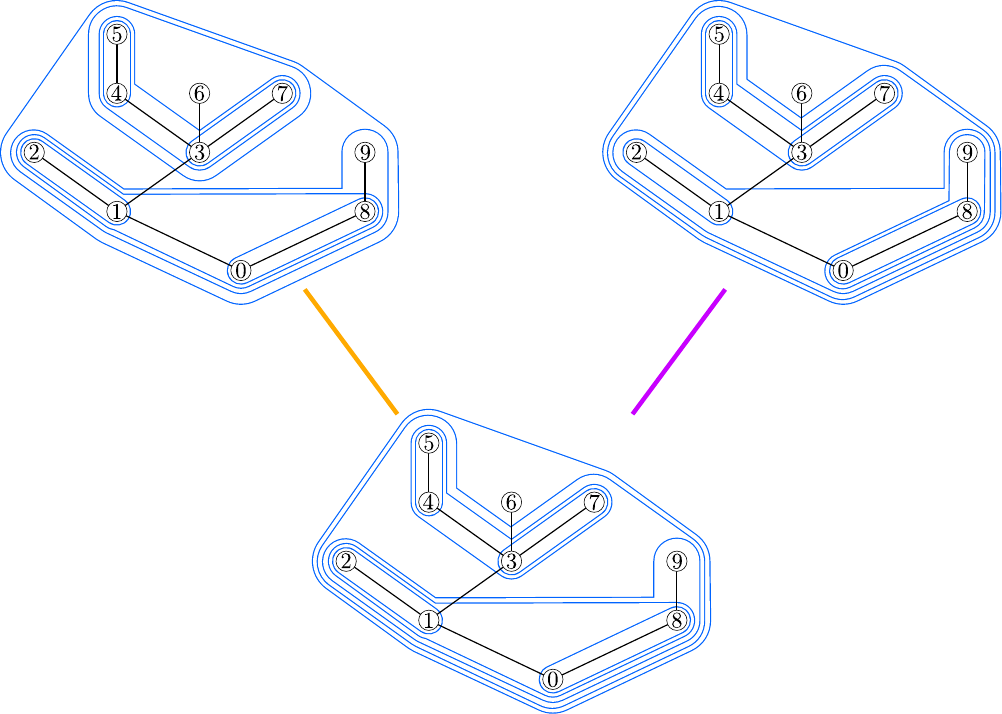}
  \end{center}
  \caption{Three maximal nestings of a tree in $\PT_9$. The top two maximal nestings cover the bottom maximal nesting. The cover relation on the left corresponds to an associahedron move, while the cover relation on the right corresponds to a permutohedron move.}\label{fig:Psi1}
\end{figure}

\begin{figure}[ht]
  \begin{center}
  \includegraphics[width=\linewidth]{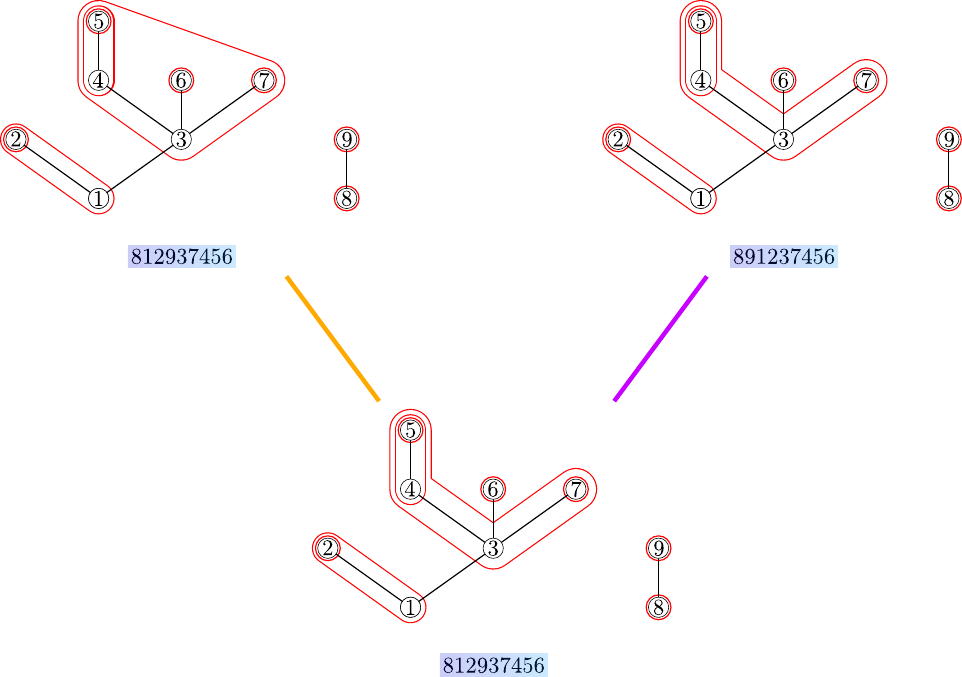}
  \end{center}
  \caption{Applying $\Psi$ to the maximal nestings in \cref{fig:Psi1} yields these three pairs, each of which consists of an ornamentation (which we represent by circling the ornaments in red) and a linear extension.}\label{fig:Psi2}
\end{figure}

\begin{remark}\label{rem:covers}
Suppose $\NN\lessdot\NN'$ is a cover relation in $\MN(\TT)$. Let $\tau\in\NN\setminus\NN'$ and $\tau'\in\NN'\setminus\NN$ be such that $\NN\setminus\{\tau\}=\NN'\setminus\{\tau'\}$. Let $v$ and $v'$ be the roots of $\tau$ and $\tau'$, respectively. If $\NN$ and $\NN'$ are related by a permutohedron move (meaning $v=v'$), then \begin{itemize}
\item $\varrho_\NN=\varrho_{\NN'}$; \item $\lambda_\NN\vert_{\tau\cap\tau'}\lambda_\NN\vert_{\tau\setminus\tau'}\lambda_\NN\vert_{\tau'\setminus\tau}$ is a consecutive factor of $\lambda_\NN$; 
\item $\lambda_{\NN'}$ is obtained from $\lambda_\NN$ by swapping $\lambda_\NN\vert_{\tau\setminus\tau'}$ and $\lambda_\NN\vert_{\tau'\setminus \tau}$. 
\end{itemize}
On the other hand, if $\NN$ and $\NN'$ are related by an associahedron move (meaning $v\neq v'$), then \begin{itemize}
\item $\varrho_\NN(u)=\varrho_{\NN'}(u)$ for all $u\in\VV^\times\setminus\{v'\}$; 
\item $\varrho_\NN(v')=\tau\cap\tau'$;
\item $\varrho_{\NN'}(v')=\tau'$;
\item $\lambda_\NN$ and $\lambda_{\NN'}$ are equal and have $\lambda_\NN\vert_{\tau\setminus\tau'}\lambda_\NN\vert_{\tau\cap\tau'}\lambda_\NN\vert_{\tau'\setminus\tau}$ as a consecutive factor. 
\end{itemize}
See \cref{fig:Psi1,fig:Psi2}. 
\end{remark}

Let $\Theta(\TT^\times)$ be the set of pairs $(\lambda,\varrho)\in\LL(\TT^\times)\times\OO(\TT^\times)$ such that for every $v\in\VV^\times$, the elements of $\varrho(v)$ form a consecutive factor of $\lambda$. We will view $\Theta(\TT^\times)$ as a subposet of $\LL(\TT^\times)\times \OO(\TT^\times)$. Note that $(\lambda_\NN,\varrho_\NN)\in\Theta(\TT^\times)$ for all $\NN\in\MN(\TT)$. Thus, we can define a map $\Psi\colon\MN(\TT)\to\Theta(\TT^\times)$ by 
\[\Psi(\NN)=(\lambda_\NN,\varrho_\NN).\] For example, if we apply $\Psi$ to the maximal nestings in \cref{fig:Psi1}, we obtain the pairs shown in \cref{fig:Psi2}.  

\begin{lemma}\label{lem:bijection_Psi}
For $\TT\in\PT_n$, the map $\Psi\colon\MN(\TT)\to\Theta(\TT^\times)$ is a bijection. 
\end{lemma}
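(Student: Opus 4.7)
The plan is to prove the lemma by constructing an explicit two-sided inverse $\Phi \colon \Theta(\TT^\times) \to \MN(\TT)$. Given a pair $(\lambda,\varrho) \in \Theta(\TT^\times)$, write $\tilde\lambda \in \LL(\TT)$ for the linear extension of $\TT$ obtained by prepending the root ${\bf r}$ to $\lambda$. I would define $\Phi(\lambda,\varrho)$ to consist of all subsets $\tau \subseteq \VV$ with $|\tau|\geq 2$ such that $\tau$ is a tube of $\TT$, $\tau$ is a consecutive factor of $\tilde\lambda$, and the following two compatibility conditions hold: (a) if the root of $\tau$ is a vertex $v \in \VV^\times$, then $\tau \subseteq \varrho(v)$; and (b) for every $w\in\tau$ that is not the root of $\tau$, we have $\varrho(w) \subseteq \tau$.

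First I would verify that $\Phi(\lambda,\varrho)$ is a nesting of $\TT$. Membership of $\VV$ is immediate. For the nested-or-disjoint property, suppose $\tau = \tilde\lambda[p..q]$ and $\tau' = \tilde\lambda[p'..q']$ lie in $\Phi(\lambda,\varrho)$ and ``cross'' as intervals, so $p\leq p'\leq q<q'$. Then $v=\tilde\lambda(p')$ is the root of $\tau'$ and a non-root element of $\tau$, whence conditions (a) and (b) force $\tau'\subseteq\varrho(v)\subseteq\tau$, contradicting $q<q'$. The same crossing argument is the engine of the rest of the proof.

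Next I would show $\Phi\circ\Psi=\mathrm{id}_{\MN(\TT)}$. The inclusion $\NN\subseteq \Phi(\Psi(\NN))$ is routine: each $\tau\in\NN$ is consecutive in $\tilde\lambda_\NN$ by definition, $\tau\subseteq\varrho_\NN(v)$ by definition of $\varrho_\NN$, and for $w\in\tau$ not the root of $\tau$, the tube $\varrho_\NN(w)\in\NN$ shares $w$ with $\tau$ and must be nested with it; since $w$ is the $\leq_\TT$-minimum of $\varrho_\NN(w)$ but not of $\tau$, only $\varrho_\NN(w)\subseteq\tau$ is possible. The reverse inclusion uses maximality of $\NN$: the crossing argument shows any $\tau\in\Phi(\Psi(\NN))$ is nested or disjoint with every tube of $\NN$, so $\tau\notin\NN$ would yield a strictly larger nesting $\NN\cup\{\tau\}$. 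This already establishes the injectivity of $\Psi$.

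For surjectivity I would verify $\Phi(\lambda,\varrho)\in\MN(\TT)$ and $\Psi(\Phi(\lambda,\varrho))=(\lambda,\varrho)$. The equalities $\tilde\lambda_{\Phi(\lambda,\varrho)}=\tilde\lambda$ and $\varrho_{\Phi(\lambda,\varrho)}=\varrho$ follow quickly; the second holds because $\varrho(v)$ itself satisfies (a) and (b) (hence lies in $\Phi(\lambda,\varrho)$ when $|\varrho(v)|\geq 2$) and is the largest member rooted at $v$ by (a). The main obstacle is showing that $\Phi(\lambda,\varrho)$ has exactly $n$ tubes. I would prove this by strong induction on $n$. The vertex $u=\lambda(n)$ must be a $\leq_\TT$-maximal element of $\TT^\times$, hence a leaf of $\TT$ satisfying $\varrho(u)=\{u\}$. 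Setting $\TT'=\TT\setminus\{u\}$ and letting $(\lambda',\varrho')$ be the evident restrictions, one checks $(\lambda',\varrho')\in\Theta(\TT'^\times)$, and the inductive hypothesis gives $|\Phi_{\TT'}(\lambda',\varrho')|=n-1$. A case analysis distinguishing tubes of $\Phi_\TT(\lambda,\varrho)$ that contain $u$ from those that do not should match the latter bijectively with tubes of $\Phi_{\TT'}(\lambda',\varrho')$ and produce exactly one new tube containing $u$, yielding the desired count $|\Phi_\TT(\lambda,\varrho)|=n$.
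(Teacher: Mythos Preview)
Your construction of the explicit inverse $\Phi$ is sound, and the arguments showing that $\Phi(\lambda,\varrho)$ is a nesting and that $\Phi\circ\Psi=\mathrm{id}_{\MN(\TT)}$ are correct. This is a genuinely different route from the paper's proof, which instead inducts on $n$ by peeling off the \emph{maximal ornaments} of $\varrho$: writing $M=\{v_1,\ldots,v_m\}$ for their roots (in $\lambda$-order), the paper restricts $(\lambda,\varrho)$ to each subtree $\TT_i$ with vertex set $\varrho(v_i)$, applies induction to get unique $\NN_i\in\MN(\TT_i)$, and sets $\NN=\{\tau_1,\ldots,\tau_m\}\cup\NN_1\cup\cdots\cup\NN_m$ where $\tau_j=\{{\bf r}\}\cup\varrho(v_1)\cup\cdots\cup\varrho(v_j)$. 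Your approach has the advantage of giving a closed-form description of $\Psi^{-1}$.

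However, the inductive step you propose for the count $|\Phi_\TT(\lambda,\varrho)|=n$ is incorrect as stated. It is not true that the tubes of $\Phi_\TT(\lambda,\varrho)$ not containing $u=\lambda(n)$ are in bijection with $\Phi_{\TT'}(\lambda',\varrho')$, nor that exactly one tube contains $u$. Take $\TT$ the chain $0<_\TT 1<_\TT 2<_\TT 3$, $\lambda=123$, and $\varrho=\varrho_{\max}$ (so $\varrho(1)=\{1,2,3\}$, $\varrho(2)=\{2,3\}$, $\varrho(3)=\{3\}$). Then $\Phi_\TT(\lambda,\varrho)=\{\{0,1,2,3\},\{1,2,3\},\{2,3\}\}$: \emph{every} tube contains $u=3$, and there are three of them, while $\Phi_{\TT'}(\lambda',\varrho')=\{\{0,1,2\},\{1,2\}\}$ has two tubes. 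The correct correspondence is $\tau'\mapsto\tau'$ if $\tau'\in\Phi_\TT$ and $\tau'\mapsto\tau'\cup\{u\}$ otherwise; this is an injection $\Phi_{\TT'}\hookrightarrow\Phi_\TT$, but showing its image misses exactly one element (equivalently, that among the nested chain of tubes in $\Phi_\TT$ containing $u$ there is exactly one whose deletion of $u$ either drops below size $2$ or remains in $\Phi_\TT$) requires a further argument you have not supplied. The paper's decomposition by maximal ornaments avoids this difficulty because the recursive pieces are dictated by $\varrho$ itself, making the tube count additive in an obvious way.
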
 

\begin{proof}
Let $(\lambda,\varrho)\in\Theta(\TT^\times)$. We will show that there is a unique $\NN\in\MN(\TT)$ such that $\lambda_\NN=\lambda$ and $\varrho_\NN=\varrho$. This is trivial if $n\leq 1$, so we may assume $n\geq 2$ and proceed by induction on~$n$. 

Let $M$ be the set of vertices $v\in\VV^\times$ such that $\varrho(v)$ is a maximal ornament of $\varrho$. Let $v_1,\ldots,v_m$ be the elements of $M$, listed in the order they appear in $\lambda$. For $i\in[m]$, let $\TT_i$ be the subtree of $\TT$ with vertex set $\varrho(v_i)$.

Fix $i\in[m]$. Let $\TT_i^\times$ be the forest poset obtained from $\TT_i$ by deleting its root vertex $v_i$. Let ${\VV_i^\times=\varrho(v_i)\setminus\{v_i\}}$ be the vertex set of $\TT_i^\times$. Consider the ornamentation $\varrho_{i}\in\OO(\TT_i^\times)$ obtained by restricting $\varrho$ to $\VV_i^\times$. Let $\lambda_i=\lambda\vert_{\VV_i^\times}$. Note that $(\lambda_i,\varrho_i)\in\Theta(\TT_i^\times)$. By induction, we know that there is a unique $\NN_i\in\MN(\TT_i)$ such that $\lambda_{\NN_i}=\lambda_i$ and $\varrho_{\NN_i}=\varrho_i$. For each $1\leq j\leq m$, it follows from the definition of $\Theta(\TT^\times)$ that the set $\tau_j=\{0\}\cup\varrho(v_1)\cup\cdots\cup\varrho(v_j)$ is a tube of $\TT$. Let \[\NN=\{\tau_1,\ldots,\tau_m\}\cup\NN_1\cup\cdots\cup\NN_m.\] Then $\Psi(\NN)=(\lambda,\varrho)$, and $\NN$ is the unique maximal nesting of $\TT$ with this property. 
\end{proof}

Consider a cover relation $(\lambda,\varrho)\lessdot(\lambda',\varrho')$ in $\Theta(\TT^\times)$. We say $(\lambda,\varrho)$ and $(\lambda',\varrho')$ are related by a \dfn{permutohedron move} if $\varrho=\varrho'$ and there exist vertices $p,q\in\VV^\times$ such that 
\begin{itemize}
\item $p$ and $q$ are incomparable in $\TT$; 
\item every number in $\varrho(p)$ is less than every number in $\varrho(q)$ in $\mathbb Z$; 
\item $\lambda\vert_{\varrho(p)}\lambda\vert_{\varrho(q)}$ is a consecutive factor of $\lambda$; 
\item $\lambda'$ is obtained from $\lambda$ by swapping $\lambda\vert_{\varrho(p)}$ and $\lambda\vert_{\varrho(q)}$. 
\end{itemize}
For example, if $(\lambda,\varrho)$ and $(\lambda',\varrho')$ are the pairs on the bottom and the top right of \cref{fig:Psi2}, then $(\lambda,\varrho)$ and $(\lambda',\varrho')$ are related by a permutohedron move. In this example, the linear extension $\lambda'=891237456$ is obtained from the linear extension $\lambda=812937456$ by swapping the consecutive factors $\lambda\vert_{\varrho(1)}=12$ and $\lambda\vert_{\varrho(9)}=9$. On the other hand, we say $(\lambda,\varrho)$ and $(\lambda',\varrho')$ are related by an \dfn{associahedron move} if $\lambda=\lambda'$ and there exists $t\in\VV^\times$ such that $\varrho(v)=\varrho'(v)$ for all $v\in\VV^\times\setminus\{t\}$ and $\varrho'(t)=\varrho(t)\cup\varrho(t^{\to})$, where $t^\to$ is the vertex in $\VV^\times$ appearing immediately after $\lambda\vert_{\varrho(v)}$ in $\lambda$. For example, if $(\lambda,\varrho)$ and $(\lambda',\varrho')$ are the pairs on the bottom and the top left of \cref{fig:Psi2}, then $(\lambda,\varrho)$ and $(\lambda',\varrho')$ are related by an associahedron move. In this example, we have $\varrho'(3)=\varrho(3)\cup\varrho(6)$ because $6$ is the vertex appearing immediately after the consecutive factor $\lambda\vert_{\varrho(3)}=3745$ in $\lambda$.  

\begin{lemma}\label{lem:Theta_covers}
Let $(\lambda,\varrho)\lessdot(\lambda',\varrho')$ be a cover relation in $\Theta(\TT^\times)$. Either $(\lambda,\varrho)$ and $(\lambda',\varrho')$ are related by a permutohedron move, or they are related by an associahedron move.  
\end{lemma}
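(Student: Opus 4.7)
My plan is to prove the lemma by a case analysis on which coordinate actually moves. Write $(\lambda,\varrho)\lessdot(\lambda',\varrho')$ for the given cover relation in $\Theta(\TT^\times)$, so $\lambda\leq\lambda'$ in $\LL(\TT^\times)$ and $\varrho\leq\varrho'$ in $\OO(\TT^\times)$. I would split into three cases: (i) $\varrho=\varrho'$; (ii) $\lambda=\lambda'$; (iii) both inequalities are strict. The goal in case (i) is to recognize the cover as a permutohedron move, in case (ii) as an associahedron move, and in case (iii) to derive a contradiction by exhibiting a strict intermediate in $\Theta$.

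For case (i), the cover lies entirely inside the $\varrho$-fiber $F_\varrho=\{(\mu,\varrho)\in\Theta\}$, which I view as a subposet of $\LL(\TT^\times)$. The laminar family of ornaments of $\varrho$ endows $F_\varrho$ with a recursive block structure: an element of $F_\varrho$ is determined by, at each node of the laminar tree, a choice of ordering for that node's child sub-ornaments, compatible with $\leq_\TT$ on their roots. Every cover inside $F_\varrho$ therefore arises from swapping two adjacent sibling sub-ornaments $\varrho(p),\varrho(q)$ in $\mu$, and I would verify directly that this swap fulfills the three conditions of a permutohedron move; the integer-ordering condition on elements of $\varrho(p)$ and $\varrho(q)$ is forced by the swap being increasing in weak order (which in turn follows from the cover going upward).

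For case (ii), the cover lies in the $\lambda$-fiber $G_\lambda=\{(\lambda,\tau)\in\Theta\}$. The atomic upward moves in $G_\lambda$ are exactly the associahedron moves: picking some vertex $t$ and replacing $\varrho(t)$ by $\varrho(t)\cup\varrho(t^\to)$, where $t^\to$ is the vertex immediately after the block $\varrho(t)$ in $\lambda$. Given $\varrho<\varrho'$ in $G_\lambda$, I would choose a vertex $v$ at which $\varrho(v)\subsetneq\varrho'(v)$ (minimal in the laminar order of $\varrho'$ among such vertices) and show that the associahedron move at $t=v$ produces a valid $\varrho''\in G_\lambda$ with $\varrho\leq\varrho''\leq\varrho'$. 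The cover hypothesis then forces $\varrho''=\varrho'$, so the cover is precisely this associahedron move.

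For case (iii), I would aim for a strict intermediate of $\Theta$ lying in the open interval $\bigl((\lambda,\varrho),(\lambda',\varrho')\bigr)$, contradicting the cover. The first candidates are $(\lambda,\varrho')$ and $(\lambda',\varrho)$: whenever either lies in $\Theta$, it gives the desired intermediate. When neither is in $\Theta$, I would exhibit instead either a permutohedron move from $(\lambda,\varrho)$ to some $(\lambda'',\varrho)\in\Theta$ with $\lambda<\lambda''\leq\lambda'$, or an associahedron move from $(\lambda,\varrho)$ to some $(\lambda,\varrho'')\in\Theta$ with $\varrho<\varrho''\leq\varrho'$; either produces a strict intermediate. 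Existence of such a move can be argued by tracking how the maximal ornaments of $\varrho$ group into the maximal ornaments of $\varrho'$ and how this refinement interacts with the block structure of $\lambda$ and $\lambda'$: if the $\varrho$-blocks that belong to a common $\varrho'$-block are already adjacent in $\lambda$, an associahedron move merges them inside $\varrho'(v)$, and otherwise a permutohedron swap of two sibling $\varrho$-blocks moves $\lambda$ partway toward $\lambda'$. The main obstacle of the whole proof is precisely this third case, and in particular the need to verify that such a move always exists without overshooting $(\lambda',\varrho')$.
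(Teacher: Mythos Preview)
Your overall strategy matches the paper's: reduce to showing $\varrho=\varrho'$ or $\lambda=\lambda'$, and in the remaining case exhibit a strict intermediate. Your sketches for cases (i) and (ii) are fine; the paper also dismisses these as straightforward.

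The gap is in case (iii). Your dichotomy ``if two adjacent maximal $\varrho$-blocks lie in a common $\varrho'$-block, merge; otherwise, swap'' is not exhaustive. Consider the situation where $\varrho$ and $\varrho'$ have \emph{the same} maximal ornaments $\varrho(v_1),\ldots,\varrho(v_m)$. Then no two adjacent top-level blocks share a $\varrho'$-parent, so your merge clause does not apply; but one can also show (as the paper does) that the $v_i$ must appear in the same order in $\lambda'$ as in $\lambda$, so no top-level swap of adjacent blocks moves $\lambda$ toward $\lambda'$ either. Your argument stalls here.

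The paper's missing ingredient is induction on $|\TT|$. When the maximal ornaments of $\varrho$ and $\varrho'$ coincide, one restricts to each subtree $\TT_i$ with vertex set $\varrho(v_i)$ and obtains pairs $(\lambda_i,\varrho_i)\leq(\lambda_i',\varrho_i')$ in $\Theta(\TT_i^\times)$. Either some restriction has both coordinates strictly increasing (and induction produces an intermediate there, which lifts back), or the strict increases in $\lambda$ and $\varrho$ occur in different subtrees $\TT_i,\TT_{i'}$, in which case reordering only the entries of $\varrho(v_i)$ in $\lambda$ according to $\lambda'$ already gives a strict intermediate $(\lambda''',\varrho)$. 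When the maximal ornaments differ, the paper does roughly what you sketch, but with a careful argument (using witnesses $\ell_{i-1}\prec_{\lambda'} r_i$ for each $i$) to force the first $\varrho(v_k)\subseteq\varrho'(v_j)$ with $k>j$ to satisfy $k=j+1$, so that the merge is legal. You should incorporate this recursion and the witness argument to close the gap.
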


\begin{proof}
It suffices to show that $\varrho=\varrho'$ or $\lambda=\lambda'$. Indeed, if $\varrho=\varrho'$, then it is straightforward to show that $(\lambda,\varrho)$ and $(\lambda',\varrho')$ are related by a permutohedron move, and if $\lambda=\lambda'$, then it is straightforward to show that $(\lambda,\varrho)$ and $(\lambda',\varrho')$ are related by an associahedron move. Assume by way of contradiction that $\varrho<\varrho'$ and $\lambda<\lambda'$. 

Let $M$ be the set of vertices $v\in\VV^\times$ such that $\varrho(v)$ is a maximal ornament of $\varrho$. Let $v_1,\ldots,v_m$ be the elements of $M$, listed in the order they appear in $\lambda$. Then $\lambda=\lambda\vert_{\varrho(v_1)}\cdots\lambda\vert_{\varrho(v_m)}$. Suppose by way of contradiction that there is an index $i\in\{2,\ldots,m\}$ such that $\varrho(v_{i})\prec_{\lambda'}\varrho(v_{i-1})$. Then there is a linear extension $\lambda''\in\LL(\TT^\times)$ obtained from $\lambda$ by swapping the factors $\lambda\vert_{\varrho(v_{i-1})}$ and $\lambda\vert_{\varrho(v_i)}$. We have $(\lambda'',\varrho)\in\Theta(\TT^\times)$ and $(\lambda,\varrho)<(\lambda'',\varrho)<(\lambda',\varrho')$, which contradicts the assumption that $(\lambda,\varrho)$ is covered by $(\lambda',\varrho')$ in $\Theta(\TT^\times)$. From this contradiction, we deduce that for each $i\in\{2,\ldots,m\}$, there exist $\ell_{i-1}\in\varrho(v_{i-1})$ and $r_i\in\varrho(v_i)$ such that $\ell_{i-1}\prec_{\lambda'} r_i$. 

Assume for the moment that $\varrho(v_1),\ldots,\varrho(v_m)$ are also the maximal ornaments of $\varrho'$. It follows from the preceding paragraph that the elements of $M$ appear in the order $v_1,\ldots,v_m$ in $\lambda'$. For $i\in[m]$, let $\TT_i$ be the subtree of $\TT$ with vertex set $\varrho(v_i)$. Let $\VV_i^\times=\varrho(v_i)\setminus\{v_i\}$ be the vertex set of the forest $\TT_i^\times$. Let $\lambda_i=\lambda\vert_{\VV_i^\times}$ and $\lambda_i'=\lambda'\vert_{\VV_i^\times}$, and note that $\lambda_i,\lambda_i'\in\LL(\TT_i^\times)$. Let $\varrho_i$ and $\varrho_i'$ be the ornamentations in $\OO(\TT^\times)$ obtained by restricting $\varrho$ and $\varrho'$, respectively, to $\VV_i^\times$. If there exists $i\in[m]$ such that $\varrho_i<\varrho_i'$ and $\lambda_i<\lambda_i'$, then we can use induction to find that $(\lambda_i,\varrho_i)$ is not covered by $(\lambda_i',\varrho_i')$ in $\Theta(\TT_i^\times)$; this then contradicts the assumption that $(\lambda,\varrho)$ is covered by $(\lambda',\varrho')$ in $\Theta(\TT^\times)$. Otherwise, there exist distinct indices $i,i'\in[m]$ such that $\lambda_i<\lambda_i'$, $\varrho_i=\varrho_i'$, $\lambda_{i'}=\lambda_{i'}'$, and $\varrho_{i'}<\varrho_{i'}'$. In this case, we can consider the linear extension $\lambda'''\in\LL(\TT^\times)$ obtained from $\lambda$ by reordering the elements of $\varrho(v_i)$ into the order they appear in $\lambda'$; we have $(\lambda''',\varrho)\in\Theta(\TT^\times)$ and $(\lambda,\varrho)<(\lambda''',\varrho)<(\lambda',\varrho')$, which is again a contradiction.

We may now assume that $\{\varrho(v_1),\ldots,\varrho(v_m)\}$ is not the set of maximal ornaments of $\varrho'$. Thus, there exists $j\in[m]$ such that $\varrho'(v_j)$ is a maximal ornament of $\varrho'$ and such that $\varrho(v_j)\subsetneq\varrho'(v_j)$. Let $k$ be the smallest index in $[m]\setminus\{j\}$ such that $\varrho(v_k)\subseteq\varrho'(v_j)$. Because $\lambda$ is a linear extension, every number in $\varrho'(v_j)\setminus\{v_j\}$ appears to the right of $v_j$ in $\lambda$. Therefore, $j<k$. 

Suppose $k=j+1$. Let $\varrho''$ be the ornamentation of $\TT^\times$ such that $\varrho''(v_j)=\varrho(v_j)\cup\varrho(v_k)$ and $\varrho''(v)=\varrho(v)$ for all $v\in\VV^\times\setminus\{v_j\}$. Then $(\lambda,\varrho'')\in\Theta(\TT^\times)$, and $\varrho<\varrho''\leq\varrho'$. This implies that $(\lambda,\varrho)<(\lambda,\varrho'')<(\lambda',\varrho')$, which is a contradiction. From this, we deduce that $k>j+1$. 

For each $i\in\{j+1,\ldots,k\}$, the set $\varrho(v_i)$ is contained in a maximal ornament of $\varrho'$, so the assumption that $\varrho'(v_j)$ is a maximal ornament of $\varrho'$ guarantees that either ${\varrho(v_i)\prec_{\lambda'}\{v_j\}}$ or ${\{v_j\}\prec_{\lambda'}\varrho(v_i)}$. If there exists $i\in\{j+1,\ldots,k\}$ such that $\varrho(v_i)\prec_{\lambda'}\{v_j\}$, then we can choose this index $i$ minimally to find that $r_i\prec_{\lambda'}\ell_{i-1}$, which is impossible. Therefore, $\{v_j\}\prec_{\lambda'}(\varrho(v_{j+1})\cup\cdots\cup\varrho(v_k))$. This implies that $v_j\prec_{\lambda'}\ell_{k-1}\prec_{\lambda'} r_k$. Because $v_j,r_k\in\varrho(v_j)\subseteq\varrho'(v_j)$ and the numbers in $\varrho'(v_j)$ form a consecutive factor of $\lambda'$, this implies that $\ell_{k-1}\in\varrho'(v_j)$. It follows that $\varrho(v_{k-1})\subseteq\varrho'(v_j)$, which contradicts the minimality in the definition of $k$. 
\end{proof}

It follows from \cref{rem:covers,lem:Theta_covers} that $\NN\lessdot\NN'$ is a cover relation in $\MN(\TT)$ corresponding to a permutohedron (respectively, associahedron) move if and only if $\Psi(\NN)\lessdot\Psi(\NN')$ is a cover relation in $\Theta(\TT^\times)$ corresponding to a permutohedron (respectively, associahedron) move. This proves the following proposition. 

\begin{proposition}\label{prop:isomorphism}
For $\TT\in\PT_n$, the map $\Psi\colon\MN(\TT)\to\Theta(\TT^\times)$ is a poset isomorphism. 
\end{proposition}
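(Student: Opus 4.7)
The plan is to prove that $\Psi$ is a poset isomorphism by combining the bijectivity established in \cref{lem:bijection_Psi} with the structural description of cover relations. Since both $\MN(\TT)$ and $\Theta(\TT^\times)$ are finite posets and $\Psi$ is already a bijection, it suffices to verify that $\Psi$ induces a bijection on cover relations: for all $\NN,\NN'\in\MN(\TT)$, we have $\NN\lessdot\NN'$ if and only if $\Psi(\NN)\lessdot\Psi(\NN')$. Once covers match up in both directions, the orders (obtained as transitive closures of Hasse diagrams) necessarily agree.

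For the forward direction, I would take a cover relation $\NN\lessdot\NN'$ in $\MN(\TT)$ and split into the two cases defined before this proposition (permutohedron move vs.\ associahedron move). In each case, \cref{rem:covers} gives an explicit description of what $\lambda_\NN,\lambda_{\NN'},\varrho_\NN,\varrho_{\NN'}$ look like. I would then compare this description entry-by-entry with the definition of the corresponding move in $\Theta(\TT^\times)$ given immediately before \cref{lem:Theta_covers}: in the permutohedron case one checks that the common root $v=v'$ of $\tau,\tau'$ plays the role of both $p$ and the parent determining the swap, noting that every element of $\tau\setminus\tau'$ precedes every element of $\tau'\setminus\tau$ in $\mathbb{Z}$ exactly when the corresponding factors of $\lambda_\NN$ can be swapped; in the associahedron case one checks that the root $v'$ of $\tau'$ is the vertex $t$ whose ornament $\varrho(t)=\tau\cap\tau'$ grows by $\varrho(t^{\to})=\varrho_{\NN}(v)$ (with $v$ the root of $\tau$ being precisely $t^{\to}$ because $\lambda_\NN\vert_{\tau\cap\tau'}$ and $\lambda_\NN\vert_{\tau\setminus\tau'}$ are consecutive in $\lambda_\NN$). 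Finally, each such $\Psi(\NN)<\Psi(\NN')$ is in fact a cover in $\Theta(\TT^\times)$ because the change is atomic: any strictly intermediate pair would force a nontrivial second swap of consecutive factors (in the permutohedron case) or an intermediate merge of ornaments (in the associahedron case), both of which are ruled out by the form of the modification.

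For the reverse direction, suppose $\Psi(\NN)\lessdot\Psi(\NN')$ is a cover in $\Theta(\TT^\times)$. By \cref{lem:Theta_covers}, this cover is a permutohedron or associahedron move in $\Theta(\TT^\times)$. I would then exhibit a maximal nesting $\NN^*$ of $\TT$ obtained from $\NN$ by the mirror modification on the tubing side: for a permutohedron move at vertices $p,q$, replace the tube of $\NN$ whose root is the common parent of $p$ and $q$ and whose content corresponds to $\varrho(p)\cup\varrho(q)$, by the tube with the factors interchanged; for an associahedron move at the vertex $t$, replace the maximal ornament of $\NN$ hung at $t^{\to}$ together with the ornament hung at $t$ by the merged tube $\varrho(t)\cup\varrho(t^{\to})$ promoted to sit immediately above the tube structure inherited from $\NN$. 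A direct check shows $\NN^*\in\MN(\TT)$ and $\NN\lessdot\NN^*$ with the same move type. Applying the already-established forward direction yields $\Psi(\NN)\lessdot\Psi(\NN^*)$, and since covers in the finite poset $\Theta(\TT^\times)$ determine their upper endpoints uniquely given the lower endpoint and the specific move data, we conclude $\Psi(\NN^*)=\Psi(\NN')$, whence $\NN^*=\NN'$ by injectivity of $\Psi$.

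The main obstacle is the bookkeeping of matching \cref{rem:covers} with the definitions of the two move types in $\Theta(\TT^\times)$, and more delicately, verifying in the reverse direction that the prescribed modification of $\NN$ really yields a valid maximal nesting rather than a set of tubes that fails nestedness or maximality. The subtlety is concentrated in the associahedron case, where one must confirm that the vertex $t^{\to}$ produced by the linear extension is in fact the root of the adjacent ornament whose maximal tube needs to be merged with the tube rooted at $t$—this is ensured by the definition of $\Theta(\TT^\times)$, which forces the elements of each ornament to appear as a consecutive factor in $\lambda$, so that $t^{\to}$ is necessarily the root of the next maximal ornament after $\varrho(t)$ among the children of whichever ornament contains both.
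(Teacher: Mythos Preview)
Your proposal is correct and takes essentially the same approach as the paper: use the bijectivity from \cref{lem:bijection_Psi}, then match cover relations in both directions via \cref{rem:covers} (forward) and \cref{lem:Theta_covers} (reverse). The paper's proof is even terser---it simply asserts that \cref{rem:covers} and \cref{lem:Theta_covers} together show that $\NN\lessdot\NN'$ is a permutohedron (resp.\ associahedron) cover in $\MN(\TT)$ if and only if $\Psi(\NN)\lessdot\Psi(\NN')$ is one in $\Theta(\TT^\times)$---so your more explicit bookkeeping is fine, though a few of the role identifications in your sketch (e.g.\ which vertex plays the part of $p$, $q$, $t$, or $t^{\to}$) need minor correction when you write out the details.
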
 

The remainder of this section is devoted to proving that $\Theta(\TT^\times)$ is a lattice; our proof relies on the following result due to Bj\"orner, Edelman, and Ziegler. 

\begin{proposition}[{\cite{BEZ}}]\label{lem:BEZ}
Let $P$ be a finite poset with a unique minimal element and a unique maximal element. Suppose that for all distinct $x_0,x_1,x_2\in P$ satisfying $x_0\lessdot x_1$ and $x_0\lessdot x_2$, the elements $x_1$ and $x_2$ have a least upper bound in $P$. Then $P$ is a lattice. 
\end{proposition}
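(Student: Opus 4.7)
The plan is to reduce the lattice property of $P$ to the covering hypothesis via induction. First, the problem reduces to showing that joins exist for every pair $x, y \in P$: meets then follow automatically, since $x \wedge y$ can be defined as the join of the nonempty set $\{z \in P : z \leq x,\, z \leq y\}$ (nonempty because it contains $\hat 0$), and joins of arbitrary finite subsets reduce inductively to joins of pairs.

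I would prove join existence by induction on $\mathrm{corank}(x)$, the length of the longest chain from $x$ to $\hat 1$, treating $y$ as a parameter. The base case $x = \hat 1$ is trivial. For the inductive step, note that $U(x,y) := \{z \in P : z \geq x \text{ and } z \geq y\}$ is nonempty since $\hat 1 \in U(x,y)$; let $M(x,y)$ be its set of minimal elements. For any $u \in M(x,y)$, choose a cover $x \lessdot x'$ with $x' \leq u$ (which exists because $u > x$). Since $\mathrm{corank}(x') < \mathrm{corank}(x)$, the inductive hypothesis provides the join $v := x' \vee y$, and $v \in U(x,y)$ with $v \leq u$. Minimality of $u$ then forces $v = u$, so every element of $M(x,y)$ has the form $x' \vee y$ for some cover $x \lessdot x'$ below it.

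The main obstacle is showing that the value $x' \vee y$ does not depend on the choice of cover $x'$, so that $|M(x,y)| = 1$. Suppose two covers $x'_1, x'_2 \gtrdot x$ yield distinct minimal upper bounds $u_1 = x'_1 \vee y$ and $u_2 = x'_2 \vee y$. If $x'_1 = x'_2$, we already obtain $u_1 = u_2$ by uniqueness of joins. Otherwise, the covering hypothesis applied to $x \lessdot x'_1$ and $x \lessdot x'_2$ produces $w := x'_1 \vee x'_2$, and the inductive hypothesis yields $w \vee y$, which dominates both $u_1$ and $u_2$. Collapsing $u_1$ and $u_2$ into a single element from this data is the delicate step: one expects to propagate the equality $u_1 = u_2$ either by iterating the hypothesis along a chain from $w$ downward toward $x$, or via an extremal argument on a minimal counterexample (choosing $(x,y)$ with $|M(x,y)| \geq 2$ and $\mathrm{corank}(x)$ maximal) to force a structural contradiction with the covering hypothesis. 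This is the technical heart of the argument.
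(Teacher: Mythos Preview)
The paper does not prove this proposition; it is cited from \cite{BEZ} and invoked as a black box in the proof of \cref{thm:lattice}, so there is no in-paper argument to compare against.

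Your outline has a genuine gap, and it is precisely the one you flag. With your inductive hypothesis (``$x'\vee z$ exists for every $z$ once $\mathrm{corank}(x')<\mathrm{corank}(x)$''), forming $w=x_1'\vee x_2'$ and then $w\vee y$ only produces a common upper bound of $u_1=x_1'\vee y$ and $u_2=x_2'\vee y$; it does not force $u_1=u_2$. Indeed one can check $u_1\vee u_2=w\vee y$, but nothing collapses the two. The obstruction is that $y$ need not lie in $[x,\hat 1]$, so lifting to $w$ gives you no grip on $y$ relative to the covers of $x$; neither ``iterate down a chain from $w$'' nor a minimal-counterexample argument (your ``$\mathrm{corank}(x)$ maximal'' is also the wrong direction for your induction) yields the missing inequality.

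The standard repair is to strengthen the induction: prove instead, by induction on $\mathrm{corank}(x)$, that $[x,\hat 1]$ is a join-semilattice. For $y_1,y_2>x$ choose covers $x\lessdot x_i'\leq y_i$, set $w=x_1'\vee x_2'$ by the covering hypothesis, form $y_i\vee w$ inside $[x_i',\hat 1]$ by induction, and then form $(y_1\vee w)\vee(y_2\vee w)$ inside $[w,\hat 1]$ by induction. Any common upper bound $u$ of $y_1,y_2$ satisfies $u\geq x_1',x_2'$, hence $u\geq w$, hence $u\geq y_i\vee w$ for each $i$, hence $u$ dominates the result; so this really is $y_1\vee y_2$. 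Taking $x=\hat 0$ gives all pairwise joins in $P$, and meets then follow exactly as you described.
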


\begin{proof}[Proof of \cref{thm:lattice}]
By \cref{prop:isomorphism}, it suffices to prove that $\Theta(\TT^\times)$ is a lattice. Let $(\lambda_0,\varrho_0)$, $(\lambda_1,\varrho_1)$, $(\lambda_2,\varrho_2)$ be distinct elements of $\Theta(\TT^\times)$ such that $(\lambda_0,\varrho_0)\lessdot(\lambda_1,\varrho_1)$ and $(\lambda_0,\varrho_0)\lessdot (\lambda_2,\varrho_2)$. We will prove that $(\lambda_1,\varrho_1)$ and $(\lambda_2,\varrho_2)$ have a least upper bound in $\Theta(\TT^\times)$; in light of \cref{lem:BEZ}, this will complete the proof. We consider three cases based on whether the cover relations under consideration correspond to permutohedron or associahedron moves. By swapping the roles of $(\lambda_1,\varrho_1)$ and $(\lambda_2,\varrho_2)$ if necessary, we may assume that either the cover relation $(\lambda_0,\varrho_0)\lessdot (\lambda_1,\varrho_1)$ corresponds to a permutohedron move or both $(\lambda_0,\varrho_0)\lessdot (\lambda_1,\varrho_1)$ and $(\lambda_0,\varrho_0)\lessdot(\lambda_2,\varrho_2)$ correspond to associahedron moves. 

\medskip 

\noindent{\bf Case 1.} Assume that both of the cover relations $(\lambda_0,\varrho_0)\lessdot (\lambda_1,\varrho_1)$ and $(\lambda_0,\varrho_0)\lessdot(\lambda_2,\varrho_2)$ correspond to permutohedron moves. Then $\varrho_0=\varrho_1=\varrho_2$, and there are vertices $p_1,q_1,p_2,q_2\in\VV^\times$ such that for each $i\in\{1,2\}$, 
\begin{itemize}
\item $p_i$ and $q_i$ are incomparable in $\TT$; 
\item every number in $\varrho_0(p_i)$ is less than every number in $\varrho_0(q_i)$ in $\mathbb Z$; 
\item $\lambda_0\vert_{\varrho_0(p_i)}\lambda_0\vert_{\varrho_0(q_i)}$ is a consecutive factor of $\lambda_0$; 
\item $\lambda_i$ is obtained from $\lambda_0$ by swapping $\lambda_0\vert_{\varrho(p_i)}$ and $\lambda_0\vert_{\varrho(q_i)}$. 
\end{itemize}
We will show that $(\lambda_1\vee\lambda_2,\varrho_0)\in\Theta(\TT^\times)$, from which it will follow that $(\lambda_1\vee\lambda_2,\varrho_0)$ is the least upper bound of $(\lambda_1,\varrho_1)$ and $(\lambda_2,\varrho_2)$. If $p_1,q_1,p_2,q_2$ are pairwise distinct, then $\lambda_1\vee\lambda_2$ is obtained from $\lambda_0$ by swapping $\lambda_0\vert_{\varrho(p_1)}$ and $\lambda_0\vert_{\varrho(q_1)}$ and also swapping $\lambda_0\vert_{\varrho(p_2)}$ and $\lambda_0\vert_{\varrho(q_2)}$ (these two swaps commute with each other). In this case, it is straightforward to see that $(\lambda_1\vee\lambda_2,\varrho_0)\in\Theta(\TT^\times)$. Now suppose $p_1,q_1,p_2,q_2$ are not pairwise distinct. Then either $p_2=q_1$ or $p_1=q_2$; we may assume without loss of generality that $p_2=q_1$. The linear extension $\lambda_0$ has $\lambda_0\vert_{\varrho_0(p_1)}\lambda_0\vert_{\varrho_0(q_1)}\lambda_0\vert_{\varrho_0(q_2)}$ as a consecutive factor, and $\lambda_1\vee\lambda_2$ is obtained from $\lambda_0$ by replacing this consecutive factor with $\lambda_0\vert_{\varrho_0(q_2)}\lambda_0\vert_{\varrho_0(q_1)}\lambda_0\vert_{\varrho_0(p_1)}$. In this case, we once again see that $(\lambda_1\vee\lambda_2,\varrho_0)\in\Theta(\TT^\times)$. 

\medskip 

\noindent{\bf Case 2.} Assume that both of the cover relations $(\lambda_0,\varrho_0)\lessdot (\lambda_1,\varrho_1)$ and $(\lambda_0,\varrho_0)\lessdot(\lambda_2,\varrho_2)$ correspond to associahedron moves. Then $\lambda_0=\lambda_1=\lambda_2$, and for each $i\in\{1,2\}$, there is a vertex $t_i\in\VV^\times$ such that $\varrho_0(v)=\varrho_i(v)$ for all $v\in\VV^\times\setminus\{t_i\}$ and $\varrho_i(t_i)=\varrho_0(t_i)\cup\varrho_0(t_i^\to)$, where $t_i^\to$ is the vertex in $\VV^\times$ appearing immediately after $\lambda_0\vert_{\varrho_0(t_i)}$ in $\lambda_0$. We will show that $(\lambda_0,\varrho_1\vee\varrho_2)\in\Theta(\TT^\times)$, from which it will follow that $(\lambda_0,\varrho_1\vee\varrho_2)$ is the least upper bound of $(\lambda_1,\varrho_1)$ and $(\lambda_2,\varrho_2)$. 
If $t_1,t_1^\to,t_2,t_2^\to$ are pairwise distinct, then the ornamentation $\varrho_1\vee\varrho_2$ sends $t_1$ to $\varrho_0(t_1)\cup\varrho_0(t_1^\to)$, sends $t_2$ to $\varrho_0(t_2)\cup\varrho_0(t_2^\to)$, and sends each vertex $v\in\VV^\times\setminus\{t_1,t_2\}$ to $\varrho_0(v)$. In this case, it is straightforward to see that $(\lambda_0,\varrho_1\vee\varrho_2)\in\Theta(\TT^\times)$. Now suppose $t_1,t_1^\to,t_2,t_2^\to$ are not pairwise distinct. Then either $t_2=t_1^\to$ or $t_1=t_2^\to$; we may assume without loss of generality that $t_2=t_1^\to$. The ornamentation $\varrho_1\vee\varrho_2$ sends $t_1$ to $\varrho_0(t_1)\cup\varrho_0(t_2)\cup\varrho_0(t_2^\to)$, sends $t_2$ to $\varrho_0(t_2)\cup\varrho_0(t_2^\to)$, and sends each vertex $v\in\VV^\times\setminus\{t_1,t_2\}$ to $\varrho_0(v)$. In this case, we once again see that $(\lambda_0,\varrho_1\vee\varrho_2)\in\Theta(\TT^\times)$. 

\medskip 

\noindent {\bf Case 3.} Assume that the cover relation $(\lambda_0,\varrho_0)\lessdot(\lambda_1,\varrho_1)$ corresponds to a permutohedron move while $(\lambda_0,\varrho_0)\lessdot(\lambda_2,\varrho_2)$ corresponds to an associahedron move. Then $\varrho_0=\varrho_1$, and there are vertices $p_1,q_1\in\VV^\times$ such that 
\begin{itemize}
\item $p_1$ and $q_1$ are incomparable in $\TT$; 
\item every number in $\varrho_0(p_1)$ is less than every number in $\varrho_0(q_1)$ in $\mathbb Z$; 
\item $\lambda_0\vert_{\varrho_0(p_1)}\lambda_0\vert_{\varrho_0(q_1)}$ is a consecutive factor of $\lambda_0$; 
\item $\lambda_1$ is obtained from $\lambda_0$ by swapping $\lambda_0\vert_{\varrho(p_1)}$ and $\lambda_0\vert_{\varrho(q_1)}$. 
\end{itemize}
Furthermore, $\lambda_0=\lambda_2$, and there is a vertex $t_2$ such that $\varrho_0(v)=\varrho_2(v)$ for all $v\in\VV^\times\setminus\{t_2\}$ and $\varrho_2(t_2)=\varrho_0(t_2)\cup\varrho_0(t_2^\to)$, where $t_2^\to$ is the vertex in $\VV^\times$ appearing immediately after $\lambda_0\vert_{\varrho_0(t_2)}$ in $\lambda_0$. If $p_1,q_1,t_2,t_2^\to$ are pairwise distinct, then $(\lambda_1,\varrho_2)$ is the least upper bound of $(\lambda_1,\varrho_1)$ and $(\lambda_2,\varrho_2)$. Now assume $p_1,q_1,t_2,t_2^\to$ are not pairwise distinct. Then either $q_1=t_2$ or $p_1=t_2^\to$. 

Suppose first that $q_1=t_2$. We have $p_1<q_2$, and $p_1$ and $q_2$ are incomparable in $\TT$. It follows that $p_1$ is less than every number in $\varrho_2(t_2)$ in $\mathbb Z$. Note that $\lambda_0\vert_{\varrho_0(p_1)}\lambda_0\vert_{\varrho_2(t_2)}$ is a consecutive factor of $\lambda_0$. Let $\lambda'$ be the linear extension of $\TT^\times$ obtained from $\lambda_0$ by swapping $\lambda_0\vert_{\varrho_0(p_1)}$ and $\lambda_0\vert_{\varrho_2(t_2)}$. Noting that $\lambda_1\leq\lambda'$, we find that $(\lambda',\varrho_2)$ is in $\Theta(\TT^\times)$ and is an upper bound of $(\lambda_1,\varrho_1)$ and $(\lambda_2,\varrho_2)$. We claim that $(\lambda',\varrho_2)$ is actually the least upper bound of $(\lambda_1,\varrho_1)$ and $(\lambda_2,\varrho_2)$. To see this, suppose $(\widehat\lambda,\widehat\varrho)\in\Theta(\TT^\times)$ is an upper bound of $(\lambda_1,\varrho_1)$ and $(\lambda_2,\varrho_2)$; we will show that $\lambda'\leq\widehat\lambda$ so that $(\lambda',\varrho_2)\leq(\widehat\lambda,\widehat\varrho)$. Because $\lambda_1\leq \widehat\lambda$, we must have $\varrho_0(t_2)\prec_{\widehat\lambda}\varrho_0(p_1)$. Because $p_1$ and $t_2$ are incomparable in $\TT$, none of the elements of $\varrho_0(p_1)$ can belong to $\widehat \varrho(t_2)$. On the other hand, since $\varrho_2(t_2)\subseteq\widehat\varrho(t_2)$ and $(\widehat\lambda,\widehat\varrho)\in\Theta(\TT^\times)$, we must have $\varrho_2(t_2)\prec_{\widehat\lambda}\varrho_0(p_1)$. It follows that $\lambda'\leq\widehat\lambda$, as desired. 

Now suppose $p_1=t_2^\to$ and $t_2\not\leq_\TT q_1$. Because $t_2^\to\in\varrho_2(t_2)$, we have $t_2<t_2^\to=p_1<q_1$. Note that $\lambda_0\vert_{\varrho_2(t_2)}\lambda_0\vert_{\varrho_0(q_1)}$ is a consecutive factor of $\lambda_0$. All of the elements of $\varrho_0(t_2)$ are incomparable to all of the elements of $\varrho_0(q_1)$ in $\TT$, and an argument similar to the one used in the preceding paragraph shows that $(\lambda',\varrho_2)$ is the least upper bound of $(\lambda_1,\varrho_1)$ and $(\lambda_2,\varrho_2)$, where $\lambda'$ is the linear extension of $\TT^\times$ obtained from $\lambda_0$ by swapping $\lambda_0\vert_{\varrho_2(t_2)}$ and $\lambda_0\vert_{\varrho_0(q_1)}$.  

Finally, suppose $p_1=t_2^\to$ and $t_2\leq_\TT q_1$. Let $\varrho'$ be the ornamentation of $\TT^\times$ such that $\varrho'(v)=\varrho_0(v)$ for all $v\in\VV^\times\setminus\{t_2\}$ and $\varrho'(t_2)=\varrho_0(t_2)\cup\varrho_0(p_1)\cup\varrho_0(q_1)$. Noting that $\varrho_2\leq\varrho'$, we find that $(\lambda_1,\varrho')$ is in $\Theta(\TT^\times)$ and is an upper bound of $(\lambda_1,\varrho_1)$ and $(\lambda_2,\varrho_2)$. We claim that $(\lambda_1,\varrho')$ is actually the least upper bound of $(\lambda_1,\varrho_1)$ and $(\lambda_2,\varrho_2)$. To see this, suppose $(\widehat\lambda,\widehat\varrho)\in\Theta(\TT^\times)$ is an upper bound of $(\lambda_1,\varrho_1)$ and $(\lambda_2,\varrho_2)$; we will show that $\varrho'\leq\widehat\varrho$ so that $(\lambda_1,\varrho')\leq(\widehat\lambda,\widehat\varrho)$. To prove that $\varrho'\leq\widehat\varrho$, we just need to demonstrate that $\varrho'(t_2)\subseteq\widehat\varrho(t_2)$. The assumption that $\varrho_2\leq\widehat\varrho$ already tells us that $\varrho_0(t_2)\cup\varrho_0(p_1)=\varrho_2(t_2)\subseteq\widehat\varrho(t_2)$. Hence, we just need to show that $\varrho_0(q_1)\subseteq\widehat\varrho(t_2)$. Because $t_2\leq_\TT q_1$, we have $\{t_2\}\prec_{\widehat\lambda}\varrho_0(q_1)$. Since $\lambda_1\leq\widehat\lambda$, we also know that $\varrho_0(q_1)\prec_{\widehat\lambda}\{p_1\}$. The elements of $\widehat\varrho(t_2)$ form a consecutive factor of $\widehat\lambda$, and $t_2,p_1\in\widehat\varrho(t_2)$. This shows that $\varrho_0(q_1)\subseteq\widehat\varrho(t_2)$, as desired. 
\end{proof}

\section{Intervals}\label{sec:intervals}
The purpose of this brief section is to prove \cref{prop:interval}, which states that if $\TT$ contains $\TT'$, then $\MN(\TT')$ is isomorphic to an interval of $\MN(\TT)$. 

\begin{proof}[Proof of \cref{prop:interval}]
It suffices to prove that $\MN(\TT')$ is isomorphic to an interval of $\MN(\TT)$ when $\TT'$ is obtained from $\TT$ by contracting a single edge $e$. Let $u$ and $u'$ be the bottom and top vertices of $e$, respectively. Let $Q$ be the subposet of $\MN(\TT)$ consisting of the maximal nestings of $\TT$ that contain the tube $\tau^*=\{u,u'\}$. The map $\NN\mapsto\NN\cup\{\tau^*\}$ is a poset isomorphism from $\MN(\TT')$ to $Q$, so $Q$ has a unique minimal element $\NN_{\min}$ and a unique maximal element $\NN_{\max}$. Suppose $\NN\in\MN(\TT)$ is such that $\NN_{\min}<\NN<\NN_{\max}$; we will prove that $\NN\in Q$. 

Let $\TT^\times$ be the forest poset obtained by deleting the root from $\TT$, and recall the poset isomorphism $\Psi\colon\MN(\TT)\to\Theta(\TT^\times)$ from \cref{prop:isomorphism}. Note that $u$ appears immediately before $u'$ in both $\lambda_{\NN_{\min}}$ and $\lambda_{\NN_{\max}}$. Note also that $u'$ belongs to both $\varrho_{\NN_{\min}}(u)$ and $\varrho_{\NN_{\max}}(u)$ and that ${\varrho_{\NN_{\min}}(u') = \varrho_{\NN_{\max}}(u') = \{u'\}}$. Because \[\Psi(\NN_{\min})<\Psi(\NN)<\Psi(\NN_{\max}),\] we have \[\lambda_{\NN_{\min}}<\lambda_\NN<\lambda_{\NN_{\max}}\quad\text{and}\quad\varrho_{\NN_{\min}}<\varrho_\NN<\varrho_{\NN_{\max}}.\] This implies that $u$ appears immediately before $u'$ in $\lambda_\NN$, that $u'\in\varrho_\NN(u)$, and that $\varrho_{\NN}(u')=\{u'\}$. It follows from the recursive description of $\Psi^{-1}$ in the proof of \cref{lem:bijection_Psi} that $\tau^*\in\NN$; that is, $\NN\in Q$. 
\end{proof}

\section{Semidistributivity}\label{sec:semidistributivity}

We now proceed to characterize the rooted plane trees $\TT$ such that $\MN(\TT)$ is semidistributive. Our proof will make use of the following result due to Barnard. 

\begin{proposition}[{\cite{Barnard}*{Proposition~22}}]\label{prop:Barnard}\
\begin{itemize}
\item A finite lattice $L$ is meet-semidistributive if and only if for every cover relation $x\lessdot y$ in $L$, the set $\nabla_L(x)\setminus\nabla_L(y)$ has a unique maximal element. 
\item A finite lattice $L$ is join-semidistributive if and only if for every cover relation $x\lessdot y$ in $L$, the set $\Delta_L(y)\setminus\Delta_L(x)$ has a unique minimal element. 
\end{itemize}
\end{proposition}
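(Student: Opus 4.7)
The plan is to establish the meet-semidistributive equivalence directly and then deduce the join-semidistributive equivalence by order-theoretic duality. Throughout, $x\lessdot y$ denotes a cover relation in $L$.

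For the forward direction of the meet case, I would first rewrite the target set in a more usable form. Given $z\geq x$, the condition $y\not\leq z$ forces $x\leq z\wedge y\lneq y$, which together with $x\lessdot y$ yields $z\wedge y=x$; conversely, $z\wedge y=x$ with $z\geq x$ implies $y\not\leq z$. Hence
\[\nabla_L(x)\setminus\nabla_L(y)=\{z\in L:z\geq x\text{ and }z\wedge y=x\}.\]
If $z_1,z_2$ lie in this set, then applying meet-semidistributivity to the common meet $y\wedge z_1=y\wedge z_2=x$ gives $y\wedge(z_1\vee z_2)=x$, so $z_1\vee z_2$ lies in the set. Because the set is finite, nonempty (it contains $x$), and closed under pairwise joins, it has a unique maximum.

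For the backward direction, I would argue by contradiction. Suppose the set condition holds at every cover relation, yet there exist $a,b,c\in L$ with $a\wedge b=a\wedge c=d$ and $a\wedge(b\vee c)>d$. Choose a cover $d\lessdot f$ with $f\leq a\wedge(b\vee c)$, so that $f\leq a$ and $f\leq b\vee c$. Let $M$ be the unique maximum of $\nabla_L(d)\setminus\nabla_L(f)$. Note $d\leq b$, and $f\not\leq b$ (else $f\leq a\wedge b=d<f$), so $b\leq M$; symmetrically $c\leq M$, hence $b\vee c\leq M$. But then $f\leq a\wedge(b\vee c)\leq M$, contradicting $f\not\leq M$.

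The join-semidistributive equivalence then follows immediately by passing to the opposite lattice $L^{\mathrm{op}}$: join-semidistributivity of $L$ is meet-semidistributivity of $L^{\mathrm{op}}$, cover relations $x\lessdot y$ of $L$ are cover relations $y\lessdot x$ of $L^{\mathrm{op}}$, the set $\Delta_L(y)\setminus\Delta_L(x)$ equals $\nabla_{L^{\mathrm{op}}}(y)\setminus\nabla_{L^{\mathrm{op}}}(x)$, and "unique minimum in $L$" is "unique maximum in $L^{\mathrm{op}}$". The only delicate point is the opening rewriting of $\nabla_L(x)\setminus\nabla_L(y)$ via $z\wedge y=x$, which crucially uses that $x\lessdot y$ is a cover; after that, both directions reduce to a single application of the semidistributive axiom, so I do not anticipate a serious obstacle.
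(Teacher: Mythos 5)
The paper does not prove this statement---it is quoted from Barnard---so there is no internal proof to compare against. Your argument is correct and is essentially the standard one: the identification of $\nabla_L(x)\setminus\nabla_L(y)$ with $\{z\geq x: z\wedge y=x\}$ (valid precisely because $x\lessdot y$ is a cover), closure of that set under joins via the semidistributive law, the contradiction argument through a cover $d\lessdot f$ below $a\wedge(b\vee c)$ for the converse, and duality for the join statement all go through, using only finiteness to pass between ``unique maximal element'' and ``maximum.'' No gaps.
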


Throughout this section, we will aim to apply \cref{prop:Barnard} with $L=\Theta(\TT^\times)$, where $\TT\in\PT_n$. We identify the vertex set $\VV$ of $\TT$ with $\{0,1,\ldots,n\}$ so that $0,1,\ldots,n$ is the preorder traversal of $\TT$. Recall that we write $\VV^\times$ for the vertex set of $\TT^\times$ (so $\VV^\times=[n]$). For $(\lambda,\varrho)\in\Theta(\TT^\times)$, we will ease notation by writing $\Delta(\lambda,\varrho)$ and $\nabla(\lambda,\varrho)$ instead of $\Delta_{\Theta(\TT^\times)}(\lambda,\varrho)$ and $\nabla_{\Theta(\TT^\times)}(\lambda,\varrho)$. 
We will separate the main pieces of our argument into four lemmas. The reader may find it helpful to consult \cref{exam:semidistributivity1} while reading the proofs of \cref{lem:piece_1,lem:piece_2} and to consult \cref{exam:semidistributivity2} while reading the proofs of \cref{lem:piece_3,lem:piece_4}.

\begin{lemma}\label{lem:piece_1}
Let $(\lambda_1,\varrho_1)\lessdot(\lambda_2,\varrho_2)$ be a cover relation in $\Theta(\TT^\times)$. If $(\lambda_1,\varrho_1)$ and $(\lambda_2,\varrho_2)$ are related by an associahedron move, then the set $\nabla(\lambda_1,\varrho_1)\setminus\nabla(\lambda_2,\varrho_2)$ has a unique maximal element. 
\end{lemma}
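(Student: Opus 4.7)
Since the cover $(\lambda_1,\varrho_1)\lessdot(\lambda_2,\varrho_2)$ is an associahedron move, we have $\lambda_1=\lambda_2$ and $\varrho_2$ agrees with $\varrho_1$ away from a vertex $t$, with $\varrho_2(t)=\varrho_1(t)\cup\varrho_1(t^\to)$. By a short connectedness argument---if $t^\to\in\varrho(t)$ then the nested ornaments $\varrho(t)$ and $\varrho(t^\to)$ force $\varrho(t^\to)\subseteq\varrho(t)$ and hence $\varrho_1(t^\to)\subseteq\varrho(t)$---the condition $(\lambda,\varrho)\in\nabla(\lambda_1,\varrho_1)\setminus\nabla(\lambda_2,\varrho_2)$ is equivalent to $(\lambda,\varrho)\geq(\lambda_1,\varrho_1)$ together with $t^\to\notin\varrho(t)$. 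Write $U$ for this set; the plan is to construct an explicit maximum of $U$.

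The associahedron-move structure forces $t<_\TT t^\to$ and places the $\TT^\times$-parent of $t^\to$ inside $\varrho_1(t)$. Let $P$ be the set of vertices on the $\TT^\times$-path from $t$ to $t^\to$ excluding $t^\to$ itself; by connectedness of $\varrho_1(t)$, we have $P\subseteq\varrho_1(t)$. Define
\[
\varrho^*(v)=\begin{cases}\nabla_\TT(v)\setminus\nabla_\TT(t^\to),& v\in P,\\ \nabla_\TT(v),& v\in\VV^\times\setminus P.\end{cases}
\]
A direct case analysis based on the tree structure confirms that $\varrho^*$ is a valid ornamentation, that $\varrho^*\geq\varrho_1$, and that $t^\to\notin\varrho^*(t)$. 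Let $\lambda^*$ be the maximum linear extension in $\LL(\TT^\times)$ such that $(\lambda^*,\varrho^*)\in\Theta(\TT^\times)$ and $\lambda^*\geq\lambda_1$. I claim $(\lambda^*,\varrho^*)$ is the unique maximum of $U$.

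For dominance in the ornamentation, let $(\lambda,\varrho)\in U$. When $v=t$, connectedness of $\varrho(t)$ combined with $t^\to\notin\varrho(t)$ gives $\varrho(t)\cap\nabla_\TT(t^\to)=\emptyset$, hence $\varrho(t)\subseteq\varrho^*(t)$. When $v\in P\setminus\{t\}$, the containment $v\in\varrho_1(t)\subseteq\varrho(t)$ forces $\varrho(v)\subseteq\varrho(t)$ by nestedness (since $\varrho(v)$ and $\varrho(t)$ share $v$ and have different roots), and intersecting with $\nabla_\TT(v)$ yields $\varrho(v)\subseteq\varrho^*(v)$. When $v\notin P$, the inclusion $\varrho(v)\subseteq\nabla_\TT(v)=\varrho^*(v)$ is immediate.

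The main obstacle is establishing $\lambda\leq\lambda^*$ for every $(\lambda,\varrho)\in U$; the subtlety is that a linear extension compatible with $\varrho$ need not be compatible with $\varrho^*$, so the maximality defining $\lambda^*$ does not apply directly. My plan is to induct on the length of a saturated chain from $(\lambda_1,\varrho_1)$ to $(\lambda,\varrho)$ in $\Theta(\TT^\times)$, invoking \cref{lem:Theta_covers} at each step. For an associahedron cover step inside $U$, the linear extension is unchanged and the ornamentation grows within the bound $\varrho^*$ by the dominance above. For a permutohedron cover step $(\bar\lambda,\bar\varrho)\lessdot(\bar\lambda',\bar\varrho)$ inside $U$, the swap exchanges consecutive factors $\bar\lambda|_{\bar\varrho(p)}$ and $\bar\lambda|_{\bar\varrho(q)}$ for vertices $p,q$ incomparable in $\TT$; the inclusions $\bar\varrho(p)\subseteq\varrho^*(p)$ and $\bar\varrho(q)\subseteq\varrho^*(q)$ together with the disjointness of $\varrho^*(p)$ and $\varrho^*(q)$ (forced by the incomparability of $p$ and $q$) place these factors inside disjoint consecutive factors of $\lambda^*$, and an inversion comparison---using that the maximality of $\lambda^*$ places the larger-valued factor $\varrho^*(q)$ before $\varrho^*(p)$ in $\lambda^*$ whenever the tree order permits---yields $\bar\lambda'\leq\lambda^*$. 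Combining these cases inductively completes the proof.
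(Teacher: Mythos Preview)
Your construction of $\varrho^*$ is too large, and as a result the element $(\lambda^*,\varrho^*)$ you claim to be the maximum of $U$ need not exist. The point you miss is that the constraint $\lambda\geq\lambda_1$ restricts which ornaments can occur at $t$: if $a\in\nabla_\TT(t)\setminus\nabla_\TT(t^\to)$ satisfies $a<t^\to$ in $\mathbb Z$ but $t^\to\prec_{\lambda_1}a$, then every $\lambda\geq\lambda_1$ has $t^\to\prec_\lambda a$, while $t\prec_\lambda t^\to$; so $t^\to$ lies strictly between $t$ and $a$ in $\lambda$, and hence $a$ cannot belong to any consecutive factor $\widehat\varrho(t)$ that contains $t$ but omits $t^\to$. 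Such vertices $a$ can certainly exist (any side-branch off the path $P$ can contribute one), and they must be excluded from $\varrho^*(t)$.

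Here is a small explicit failure. Take $\TT^\times$ with $1$ covered by $2$ and $5$, and $2$ covered by $3$ and $4$ (so the preorder on $\{1,\dots,5\}$ is $1,2,3,4,5$). Let $\lambda_1=12435$, $\varrho_1(1)=\{1,2\}$, and $\varrho_1(v)=\{v\}$ otherwise; then $t=1$, $t^\to=4$, and $P=\{1,2\}$. Your $\varrho^*$ has $\varrho^*(1)=\{1,2,3,5\}$ and $\varrho^*(2)=\{2,3\}$, and the only linear extensions compatible with this ornamentation are $12354$ and $15234$; neither contains the inversion $(3,4)$ of $\lambda_1$, so the set $\{\lambda\in\LL(\TT^\times):(\lambda,\varrho^*)\in\Theta(\TT^\times),\ \lambda\geq\lambda_1\}$ is empty and your $\lambda^*$ is undefined. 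The paper's proof handles exactly this by defining
\[
A=\{a\in\nabla_\TT(t)\setminus\nabla_\TT(t^\to):(a,t^\to)\notin\Inv(\lambda_1)\}
\]
and setting $\varrho^*(a)=\nabla_\TT(a)\cap A$ for $a\in A$ (so in the example $\varrho^*(1)=\{1,2,5\}$ and $\varrho^*(2)=\{2\}$); it then describes $\lambda^*$ explicitly via its inversion set and verifies directly that $\widehat\varrho(t)\subseteq A$ and $\widehat\lambda\leq\lambda^*$ for every $(\widehat\lambda,\widehat\varrho)\in U$, avoiding the inductive argument you sketch.
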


\begin{proof}
Because $(\lambda_1,\varrho_1)$ and $(\lambda_2,\varrho_2)$ are related by an associahedron move, we have $\lambda_1=\lambda_2$, and there is a vertex $t\in \VV^\times$ such that $\varrho_1(v)=\varrho_2(v)$ for all $v\in\VV^\times\setminus\{t\}$ and $\varrho_2(t)=\varrho_1(t)\cup\varrho_1(t^\to)$, where $t^\to$ is the vertex in $\VV^\times$ appearing immediately after the elements of $\varrho_1(t)$ in $\lambda_1$. Let \[A=\{a\in\nabla_{\TT}(t)\setminus\nabla_{\TT}(t^\to):(a,t^\to)\not\in\Inv(\lambda_1)\}.\] Define an ornamentation $\varrho^*\in\OO(\TT^\times)$ by letting $\varrho^*(a)=\nabla_\TT(a)\cap A$ for all $a\in A$ and letting $\varrho^*(v)=\nabla_\TT(v)$ for all $v\in\VV^\times\setminus A$. 

Let $K$ be the set of pairs $(i,j)$ with $1\leq i<j\leq n$ such that either $i\in A$ and $j\in\nabla_\TT(t^\to)$ or $i\leq_\TT j$. There is a unique linear extension $\lambda^*\in\LL(\TT^\times)$ whose inversions are the pairs $(i,j)$ such that $1\leq i<j\leq n$ and $(i,j)\not\in K$. Note that $\lambda^*$ is the unique maximal element (in the weak order) of the set $\{\sigma\in\LL(\TT^\times):(\sigma,\varrho^*)\in\Theta(\TT^\times)\}$. We will prove that $(\lambda^*,\varrho^*)$ is the unique maximal element of $\nabla(\lambda_1,\varrho_1)\setminus\nabla(\lambda_2,\varrho_2)$. Choose an arbitrary element $(\widehat\lambda,\widehat\varrho)\in\nabla(\lambda_1,\varrho_1)\setminus\nabla(\lambda_2,\varrho_2)$; we will prove that $\widehat\lambda\leq\lambda^*$ and $\widehat\varrho\leq\varrho^*$. 

Let us start by proving that $\widehat\varrho\leq\varrho^*$; to do so, it suffices to show that $\widehat\varrho(a)\subseteq A$ for all $a\in A$. Thus, fix $a\in A$. By the definitions of $t^\to$ and $A$, we must have either $a<t^\to$ and $a\in\varrho_1(t)$ or $a>t^\to$ and $t^\to\prec_{\lambda_1} a$. If $a>t^\to$ and $t^\to\prec_{\lambda_1} a$, then we must have $b>t^\to$ and $t^\to\prec_{\lambda_1} b$ for all $b\in\nabla_\TT(a)$, so $\nabla_\TT(a)\subseteq A$. In this case, we certainly have $\widehat\varrho(a)\subseteq\nabla_\TT(a)\subseteq A$, as desired. Thus, we may assume that $a<t^\to$ and $a\in\varrho_1(t)$. Since $\varrho_1\leq\widehat\varrho$, we have $a\in\varrho_1(t)\subseteq\widehat\varrho(t)$, so $\widehat\varrho(a)\subseteq\widehat\varrho(t)$. Thus, in order to show that $\widehat\varrho(a)\subseteq A$, it suffices to prove that $\widehat\varrho(t)\subseteq A$. As $\lambda_2=\lambda_1\leq\widehat\lambda$ and $(\widehat\lambda,\widehat\varrho)\not\in\nabla(\lambda_2,\varrho_2)$, we know that $\varrho_2\not\leq\widehat\varrho$. We also know that $\varrho_2(v)=\varrho_1(v)\subseteq\widehat\varrho(v)$ for all $v\in\VV^\times\setminus\{t\}$, so we must have $\varrho_1(t)\cup\varrho_1(t^\to)=\varrho_2(t)\not\subseteq\widehat\varrho(t)$. Since $\varrho_1(t)\subseteq\widehat\varrho(t)$, we deduce that $\varrho_1(t^\to)\not\subseteq\widehat\varrho(t)$, so $\widehat\varrho(t^\to)\not\subseteq\widehat\varrho(t)$. This implies that $\widehat\varrho(t^\to)$ and $\widehat\varrho(t)$ are disjoint (since they cannot be nested), so $t^\to\not\in\widehat\varrho(t)$. Hence, $\widehat\varrho(t)\subseteq\nabla_\TT(t)\setminus\nabla_\TT(t^\to)$. Consider an arbitrary vertex $x\in\widehat\varrho(t)$; we will show that $x\in A$. We already know that $x\in\nabla_\TT(t)\setminus\nabla_\TT(t^\to)$, so we just need to prove that $(x,t^\to)\not\in\Inv(\lambda_1)$. Because the elements of $\widehat\varrho(t)$ form a consecutive factor of $\widehat\lambda$ (since $(\widehat\lambda,\widehat\varrho)\in\Theta(\TT^\times)$), we cannot have $t\prec_{\widehat\lambda}t^\to\prec_{\widehat\lambda} x$. Therefore, $(x,t^\to)$ cannot be an inversion of $\widehat\lambda$. Since $\lambda_1\leq\widehat\lambda$, this implies that $(x,t^\to)\not\in\Inv(\lambda_1)$, as desired. 

We now show that $\widehat\lambda\leq\lambda^*$. Suppose instead that $(i,j)\in\Inv(\widehat\lambda)\setminus\Inv(\lambda^*)$. Then $(i,j)\in K$, and we cannot have $i\leq_\TT j$ (since $\widehat\lambda\in\LL(\TT^\times)$), so we must have $i\in A$ and $j\in\nabla_\TT(t^\to)$. This implies that $i<t^\to$ and $(i,t^\to)\not\in\Inv(\lambda_1)$. Since $t<_\TT i$, we deduce that $t\prec_{\lambda_1}i\prec_{\lambda_1}t^\to$. Because $t^\to$ is the vertex appearing immediately after the elements of $\varrho_1(t)$ in $\lambda_1$, this implies that $i\in\varrho_1(t)\subseteq\widehat\varrho(t)$. Because $(i,j)\in\Inv(\widehat\lambda)$ and $t<_\TT j$, we must have $t\prec_{\widehat\lambda}j\prec_{\widehat\lambda}i$. Because $(\widehat\lambda,\widehat\varrho)\in\Theta(\TT^\times)$, the elements of $\widehat\varrho(t)$ form a consecutive factor of $\widehat\lambda$; this consecutive factor includes $t$ and $i$, so $j\in\widehat\varrho(t)$. We saw in the preceding paragraph that $\widehat\varrho(t)\subseteq A$, so $j\in A$. This is a contradiction because $A\subseteq\nabla_\TT(t)\setminus\nabla_\TT(t^\to)$ and $j\in\nabla_\TT(t^\to)$. 
\end{proof}

\begin{lemma}\label{lem:piece_2}
Let $(\lambda_1,\varrho_1)\lessdot(\lambda_2,\varrho_2)$ be a cover relation in $\Theta(\TT^\times)$. If $(\lambda_1,\varrho_1)$ and $(\lambda_2,\varrho_2)$ are related by an associahedron move, then the set $\Delta(\lambda_2,\varrho_2)\setminus\Delta(\lambda_1,\varrho_1)$ has a unique minimal element. 
\end{lemma}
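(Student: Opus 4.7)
The plan is to construct an explicit candidate $(\lambda^{**},\varrho^{**})$ for the unique minimum of $\Delta(\lambda_2,\varrho_2)\setminus\Delta(\lambda_1,\varrho_1)$, dualizing the argument of \cref{lem:piece_1}. Because the cover corresponds to an associahedron move, we have $\lambda_1=\lambda_2$ and $\varrho_2(t)=\varrho_1(t)\cup\varrho_1(t^\to)$; the requirement that $\varrho_2(t)$ be an ornament hung at $t$ forces $t<_\TT t^\to$. Let $P=\{v_0=t,v_1,\ldots,v_m=t^\to\}$ be the unique path from $t$ to $t^\to$ in $\TT^\times$, so that $v_0,\ldots,v_{m-1}\in\varrho_1(t)$ and $v_m=t^\to\in\varrho_1(t^\to)$.

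I would first show that every $(\widehat\lambda,\widehat\varrho)\in\Delta(\lambda_2,\varrho_2)\setminus\Delta(\lambda_1,\varrho_1)$ satisfies $P\subseteq\widehat\varrho(t)$. The hypothesis $\widehat\varrho\not\leq\varrho_1$ combined with $\widehat\varrho(v)\subseteq\varrho_1(v)=\varrho_2(v)$ for every $v\neq t$ forces $\widehat\varrho(t)\cap\varrho_1(t^\to)\neq\emptyset$; picking $x$ in this intersection and analyzing the nesting relations among $\widehat\varrho(x)$, $\widehat\varrho(t)$, and $\widehat\varrho(t^\to)$ (each is hung at its base and so contained in $\nabla_\TT$ of that base) yields $t^\to\in\widehat\varrho(t)$, and connectedness of $\widehat\varrho(t)$ in $\TT^\times$ then forces $P\subseteq\widehat\varrho(t)$. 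Next, define $B\subseteq\varrho_1(t)\setminus P$ to be the set of vertices $b$ wedged between consecutive $P$-vertices in $\lambda_1$ in both a positional and a numerical sense: $b\in B$ iff there exists an index $0\leq i<m$ with $v_i\prec_{\lambda_1}b\prec_{\lambda_1}v_{i+1}$ and $v_i<b<v_{i+1}$ in $\mathbb{Z}$. Set $\varrho^{**}(t)=P\cup B$ and $\varrho^{**}(v)=\{v\}$ for $v\neq t$; one checks that this defines a valid ornamentation satisfying $\varrho^{**}\leq\varrho_2$ and $\varrho^{**}\not\leq\varrho_1$. Take $\lambda^{**}$ to be the unique minimum linear extension of $\TT^\times$ satisfying $\lambda^{**}\leq\lambda_1$ and $(\lambda^{**},\varrho^{**})\in\Theta(\TT^\times)$; existence and uniqueness follow from the fact that $\LL(\TT^\times)$ is a lattice.

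Minimality is proved by showing, for arbitrary $(\widehat\lambda,\widehat\varrho)$ in the difference set, that $\varrho^{**}\leq\widehat\varrho$ and $\lambda^{**}\leq\widehat\lambda$. The first reduces to $P\cup B\subseteq\widehat\varrho(t)$: the $P$ part was established above, and each $b\in B$ must lie in $\widehat\varrho(t)$ because $\widehat\lambda\leq\lambda_1$ cannot remove the inversions that trap $b$ between $v_i$ and $v_{i+1}$ in $\widehat\lambda$, forcing $b$ into the consecutive factor of $\widehat\varrho(t)$. The second inequality, $\Inv(\lambda^{**})\subseteq\Inv(\widehat\lambda)$, is the technical heart of the argument and is proved by a case analysis on inversions of $\lambda^{**}$ parallel to the one in \cref{lem:piece_1}. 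The main obstacle is this inversion-tracking analysis, made delicate by the fact that $\varrho^{**}(t)$ can range from exactly $P$ (when all elements of $\varrho_1(t)\setminus P$ can be swept outside $[v_0,v_m]$ by decreasing $\lambda_1$ in weak order) to all of $\varrho_2(t)=\varrho_1(t)\cup\varrho_1(t^\to)$ (when every such element is locked between two $P$-vertices by numerical constraints), and one needs the definition of $B$ to handle both extremes uniformly.
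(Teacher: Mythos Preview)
Your candidate ornament $\varrho^{**}(t)=P\cup B$ is too small, and this breaks the construction. Consider $\TT^\times$ with vertex set $\{1,2,3,4\}$, where $1$ is covered by $2$ and by $3$, and $3$ is covered by $4$ (preorder labels). Take $\lambda_1=\lambda_2=1324$, $\varrho_1(1)=\{1,2,3\}$, and $\varrho_1(v)=\{v\}$ otherwise; then $t=1$, $t^\to=4$, and $\varrho_2(1)=\{1,2,3,4\}$. The path is $P=\{v_0,v_1,v_2\}=\{1,3,4\}$. The only candidate for $B$ is $b=2$, which sits positionally between $v_1=3$ and $v_2=4$ in $\lambda_1$; but your numerical requirement $v_1<b<v_2$ fails since $3\not<2$, so $B=\emptyset$ and $\varrho^{**}(1)=\{1,3,4\}$. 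There is \emph{no} linear extension $\lambda^{**}\leq\lambda_1$ in which $\{1,3,4\}$ is a consecutive factor (the only options below $\lambda_1$ are $1234$ and $1324$, and $2$ separates $1,3$ from $4$ in both), so your $\lambda^{**}$ does not exist. Said differently, every $(\widehat\lambda,\widehat\varrho)$ in the difference set is forced to have $2\in\widehat\varrho(1)$, even though $2\notin P\cup B$.

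The underlying error is in the trapping argument: you trap $b$ between \emph{consecutive} path vertices $v_i,v_{i+1}$ and then impose $v_i<b<v_{i+1}$ so that the non-inversions $(v_i,b)$ and $(b,v_{i+1})$ persist in $\widehat\lambda$. But the positional index $i$ need not match the numerical one. The paper's proof avoids this by trapping only between the endpoints $t=v_0$ and $t^\to=v_m$: one sets
\[
\varrho_*(t)=\{t^\to\}\cup\{x\in\varrho_1(t):x<t^\to\},
\]
and for any such $x$ one has $t\leq_\TT x$ (forcing $t\preceq_{\widehat\lambda}x$) and $x<t^\to$ with $x\prec_{\lambda_1}t^\to$ (forcing $x\prec_{\widehat\lambda}t^\to$), so $x$ lies in the consecutive factor $\widehat\varrho(t)$. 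With this $\varrho_*$, the companion linear extension $\lambda_*=1\,2\cdots(t-1)\,\mu\,\nu$ (increasing on $\varrho_*(t)$, then increasing on the rest) exists and is minimal. Your overall strategy is sound; replacing $P\cup B$ by this simpler set repairs the proof.
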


\begin{proof}
Because $(\lambda_1,\varrho_1)$ and $(\lambda_2,\varrho_2)$ are related by an associahedron move, we have $\lambda_1=\lambda_2$, and there is a vertex $t\in \VV^\times$ such that $\varrho_1(v)=\varrho_2(v)$ for all $v\in\VV^\times\setminus\{t\}$ and $\varrho_2(t)=\varrho_1(t)\cup\varrho_1(t^\to)$, where $t^\to$ is the vertex in $\VV^\times$ appearing immediately after the elements of $\varrho_1(t)$ in $\lambda_1$. Define an ornamentation $\varrho_*$ of $\TT^\times$ by letting $\varrho_*(t)=\{t^\to\}\cup\{x\in\varrho_1(t):x<t^\to\}$ and letting $\varrho_*(v)=\{v\}$ for all $v\in\VV^\times\setminus\{t\}$. 

Let $\lambda_*$ be the linear extension $12\cdots (t-1)\mu\nu$ of $\TT^\times$, where $\mu$ is the word obtained by reading the elements of $\varrho_*(t)$ in increasing order and $\nu$ is the word obtained by reading the elements of $\{t,t+1,\ldots,n\}\setminus\varrho_*(t)$ in increasing order. It is straightforward to see that $\lambda_*$ is the unique minimal element (in the weak order) of the set $\{\sigma\in\LL(\TT^\times):(\sigma,\varrho^*)\in\Theta(\TT^\times)\}$. We will prove that $(\lambda_*,\varrho_*)$ is the unique minimal element of $\Delta(\lambda_2,\varrho_2)\setminus\Delta(\lambda_1,\varrho_1)$. Choose an arbitrary element $(\widehat\lambda,\widehat\varrho)\in\Delta(\lambda_2,\varrho_2)\setminus\Delta(\lambda_1,\varrho_1)$; we will prove that $\lambda_*\leq\widehat\lambda$ and $\varrho_*\leq\widehat\varrho$. 

Let us start by proving that $\varrho_*\leq\widehat\varrho$; to do so, it suffices to show that $\varrho_*(t)\subseteq\widehat\varrho(t)$. Thus, let $a\in\varrho_*(t)$.
Because $\widehat\lambda\leq\lambda_2=\lambda_1$, we must have $\widehat\varrho\not\leq\varrho_1$. We have $\widehat\varrho(v)\subseteq\varrho_2(v)=\varrho_1(v)$ for every $v\in\VV^\times\setminus\{t\}$, so $\widehat\varrho(t)\not\subseteq\varrho_1(t)$. But $\widehat\varrho(t)\subseteq\varrho_2(t)=\varrho_1(t)\cup\varrho_1(t^\to)$, so there exists an element $b\in\widehat\varrho(t)\cap\varrho_1(t^\to)$. Since $t<_\TT t^\to\leq_\TT b$ and $\widehat\varrho(t)$ induces a connected subgraph of $\TT^\times$, we must have $t^\to\in\widehat\varrho(t)$. If $a=t^\to$, then we have shown that $a\in\widehat\varrho(t)$, as desired. Now suppose $a\neq t^\to$. By the definition of $\varrho_*(t)$, we know that $a<t^\to$ and that $a\in\varrho_1(t)$. It follows that $(a,t^\to)$ is not an inversion of the linear extension $\lambda_1=\lambda_2$, so (since $\widehat\lambda\leq\lambda_2$) it is also not an inversion of $\widehat\lambda$. This means that $t\preceq_{\widehat\lambda}a\prec_{\widehat\lambda} t^\to$. We know that $t,t^\to\in\widehat\varrho(t)$ and that the elements of $\widehat\varrho(t)$ form a consecutive factor of $\widehat\lambda$, so $a$ must also be in $\widehat\varrho(t)$. 

Let us now prove that $\lambda_*\leq\widehat\lambda$. Suppose $(i,j)$ is an inversion of $\lambda_*$. Then $j$ must be in the word $\mu$, while $i$ must be in the word $\nu$. In other words, we have $j\in\varrho_*(t)$ and $i\in\{t,t+1,\ldots,n\}\setminus\varrho_*(t)$. We saw in the preceding paragraph that $\varrho_*(t)\subseteq\widehat\varrho(t)$, so $j\in\widehat\varrho(t)$. We have $i<j\leq t^\to$, so it follows from the fact that $i\not\in\varrho_*(t)$ that $i\not\in\varrho_1(t)$. Moreover, the fact that $i<t^\to$ implies that $i\not\in\varrho_1(t^\to)$. Therefore, $i\not\in\varrho_2(t)$. As $\widehat\varrho(t)\subseteq\varrho_2(t)$, we find that $i\not\in\widehat\varrho(t)$. Thus, we have shown that $\widehat\varrho(t)$ contains $t$ and $j$ but not $i$. We also know that $t<i<j$ and that $t<_\TT j$, so it follows from the definition of the preorder traversal that $t<_\TT i$. This implies that $t\prec_{\widehat\lambda}i$. However, $\widehat\varrho(t)$ is a consecutive factor of $\widehat\lambda$ that contains $t$ and $j$ but not $i$, so $j\prec_{\widehat\lambda}i$. This shows that $(i,j)$ is an inversion of $\widehat\lambda$. As $(i,j)$ was an arbitrary inversion of $\lambda_*$, we deduce that $\lambda_*\leq\widehat\lambda$. 

The preceding paragraphs show that every element of ${\Delta(\lambda_2,\varrho_2)\setminus\Delta(\lambda_1,\varrho_1)}$ is greater than or equal to $(\lambda_*,\varrho_*)$. In particular, $(\lambda_*,\varrho_*)\in\Delta(\lambda_2,\varrho_2)$. Because $t^\to\in\varrho_*(t)\setminus\varrho_1(t)$, we have $\varrho_*\not\leq\varrho_1$. It follows that $(\lambda_*,\varrho_*)\not\in\Delta(\lambda_1,\varrho_1)$. This shows that $(\lambda_*,\varrho_*)$ is actually in the set $\Delta(\lambda_2,\varrho_2)\setminus\Delta(\lambda_1,\varrho_1)$, so it must be the unique minimal element of this set. 
\end{proof}

\begin{example}\label{exam:semidistributivity1}
\cref{fig:exam:semidistributivity1} portrays a cover relation $(\lambda_1,\varrho_1)\lessdot(\lambda_2,\varrho_2)$ in $\Theta(\TT^\times)$, where $\TT^\times$ is as depicted. This cover relation corresponds to an associahedron move. In the notation of the proofs of \cref{lem:piece_1,lem:piece_2}, we have $t=4$ and $t^\to=7$. The pairs $(\lambda^*,\varrho^*)$ and $(\lambda_*,\varrho_*)$ constructed in the proofs of \cref{lem:piece_1,lem:piece_2} are shown on the left and right, respectively, in \cref{fig:exam:semidistributivity1_2}. 
\end{example}

\begin{figure}[ht]
\begin{center}
\includegraphics[height=15.157cm]{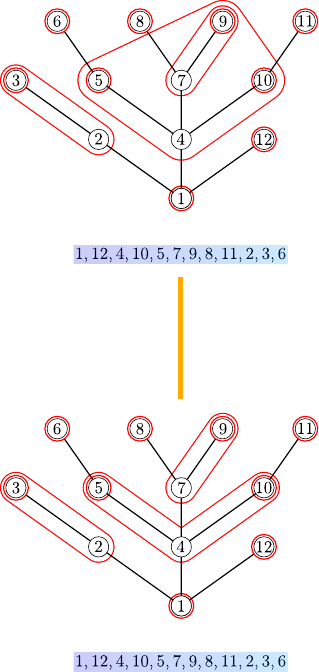}
\end{center}
  \caption{A cover relation $(\lambda_1,\varrho_1)\lessdot(\lambda_2,\varrho_2)$ corresponding to an associahedron move.}\label{fig:exam:semidistributivity1}
\end{figure}

\begin{figure}[ht]
\begin{center}
\includegraphics[height=5.958cm]{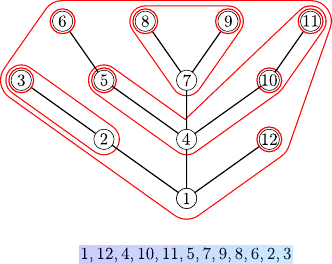}\qquad \includegraphics[height=5.771cm]{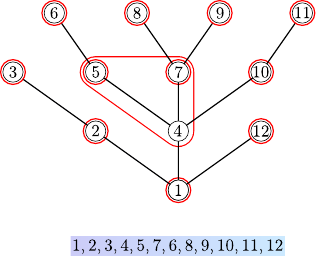}
\end{center}
  \caption{The pairs $(\lambda^*,\varrho^*)$ (left) and $(\lambda_*,\varrho_*)$ (right) constructed in the proofs of \cref{lem:piece_1,lem:piece_2}, where $(\lambda_1,\varrho_1)\lessdot(\lambda_2,\varrho_2)$ is the cover relation shown in \cref{fig:exam:semidistributivity1}. }\label{fig:exam:semidistributivity1_2}
\end{figure}

\begin{lemma}\label{lem:piece_3}
Suppose that every vertex of $\TT$ that is not in the rightmost branch of $\TT$ is covered by at most $1$ element of $\TT$. Let $(\lambda_1,\varrho_1)\lessdot(\lambda_2,\varrho_2)$ be a cover relation in $\Theta(\TT^\times)$. If $(\lambda_1,\varrho_1)$ and $(\lambda_2,\varrho_2)$ are related by a permutohedron move, then the set $\nabla(\lambda_1,\varrho_1)\setminus\nabla(\lambda_2,\varrho_2)$ has a unique maximal element. 
\end{lemma}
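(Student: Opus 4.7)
The plan is to mimic the strategy of Lemma 5.2 (the associahedron case), explicitly constructing a candidate $(\lambda^*,\varrho^*)$ and then verifying that it is the unique maximum of $\nabla(\lambda_1,\varrho_1)\setminus\nabla(\lambda_2,\varrho_2)$. Because the cover is a permutohedron move, we have $\varrho_1=\varrho_2$ together with incomparable vertices $p,q\in\VV^\times$ satisfying $\varrho_1(p)<\varrho_1(q)$ in $\mathbb{Z}$, the concatenation $\lambda_1|_{\varrho_1(p)}\lambda_1|_{\varrho_1(q)}$ is a consecutive factor of $\lambda_1$, and $\lambda_2$ swaps these two subfactors. Since $\varrho_2=\varrho_1\leq \widehat\varrho$ holds automatically for any $(\widehat\lambda,\widehat\varrho)\geq(\lambda_1,\varrho_1)$, membership of $(\widehat\lambda,\widehat\varrho)$ in $\nabla(\lambda_1,\varrho_1)\setminus\nabla(\lambda_2,\varrho_2)$ reduces to the single condition $\lambda_2\not\leq\widehat\lambda$; equivalently, there exists some pair $(i,j)\in\varrho_1(p)\times\varrho_1(q)$ with $i\prec_{\widehat\lambda}j$.

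The key structural observation is that the hypothesis constrains the tree near $p$. Let $m=p\wedge_{\TT}q$ and let $p'\leq_{\TT}p$ and $q'\leq_{\TT}q$ be the children of $m$ with $p'$ strictly to the left of $q'$. Since $m$ has at least two distinct children, the hypothesis forces $m$ to lie in the rightmost branch of $\TT$. Because $p'$ is not the rightmost child of $m$, $p'$ does not lie in the rightmost branch, and inductively every descendant of $p'$ also fails to lie in the rightmost branch and so has at most one child. Thus the subtree of $\TT$ rooted at $p'$ is a chain, and $\nabla_{\TT}(p')$ is a range of consecutive integers in the preorder labeling. This chain structure is exactly what is missing in the associahedron case and is what makes the permutohedron analysis tractable.

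I will define $\varrho^*$ and $\lambda^*$ in a manner analogous to the proof of Lemma 5.2. Choose an appropriate subset $A\subseteq\nabla_{\TT}(p')$ (the analog of the set $A$ in Lemma 5.2) so that defining $\varrho^*(a)=\nabla_{\TT}(a)\cap A$ for $a\in A$ and extending $\varrho^*(v)$ to the largest possible ornament at $v$ elsewhere yields a valid ornamentation with $\varrho^*\geq\varrho_1$. Then let $K$ be the set of pairs $(i,j)$ with $i<j$ such that either $i\leq_{\TT}j$ or making $(i,j)$ an inversion would violate compatibility with $\varrho^*$ or force $\lambda_2\leq\lambda^*$; the final linear extension $\lambda^*$ is the one whose inversions are exactly the pairs not in $K$. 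Membership of $(\lambda^*,\varrho^*)$ in the difference set is then immediate from the construction: the condition $\lambda_2\not\leq\lambda^*$ is witnessed by $p\prec_{\lambda^*}q$, which in turn is forced by including $(p,q)\in K$.

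The main work, and the step I expect to be hardest, is proving that every $(\widehat\lambda,\widehat\varrho)$ in the difference set satisfies $\widehat\varrho\leq\varrho^*$ and $\widehat\lambda\leq\lambda^*$. For the ornamentation bound, I would argue vertex by vertex, distinguishing vertices in $\nabla_{\TT}(p')$ (where the chain structure plus the consecutive-factor condition in $\Theta$ forces $\widehat\varrho(v)\subseteq\varrho^*(v)$) from those outside. For the linear-extension bound, assume some pair $(i,j)\in\Inv(\widehat\lambda)\setminus\Inv(\lambda^*)$ and use the consecutive-factor property of $\widehat\varrho(\cdot)$ in $\widehat\lambda$, together with the hypothesis that some pair in $\varrho_1(p)\times\varrho_1(q)$ is uninverted in $\widehat\lambda$, to derive a contradiction. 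The principal obstacle will be that the witnessing pair $(i,j)\in\varrho_1(p)\times\varrho_1(q)$ can vary across elements of the difference set, so the bound must be established uniformly; the chain structure of $\nabla_{\TT}(p')$ should allow us to localize the analysis to essentially the topmost pair, mirroring how the set $A$ in Lemma 5.2 localizes the obstruction around $t$ and $t^{\to}$.
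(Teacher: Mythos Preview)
Your high-level plan---construct an explicit candidate $(\lambda^*,\varrho^*)$ and then verify that every element of the difference set lies below it---is exactly the paper's strategy, and you have correctly isolated the crucial structural input: since $p$ lies off the rightmost branch, the entire subtree above the child $p'$ of $m=p\wedge_\TT q$ is a chain. The paper uses precisely this observation (phrased as: both $\varrho_1(p)$ and $C=\{v\in\VV^\times\setminus\mathsf B:v\leq_\TT p\}$ are chains).

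Where your sketch risks going wrong is in the scope of the set you restrict. You propose choosing $A\subseteq\nabla_\TT(p')$ and defining $\varrho^*(a)=\nabla_\TT(a)\cap A$ for $a\in A$, with $\varrho^*(v)=\nabla_\TT(v)$ elsewhere. The paper's analogue of $A$ is \emph{not} confined to $\nabla_\TT(p')$: with $z=p'$ (the minimum of $C$) and $u=\max\varrho_1(p)$, the paper partitions the full preorder interval $\{z,z+1,\ldots,q-1\}$ into
\[
X=\{x:z\le x<q,\ (x,q)\notin\Inv(\lambda_1)\}\quad\text{and}\quad Y=\{y:z\le y<q,\ (y,q)\in\Inv(\lambda_1)\},
\]
and the ornaments must be restricted at \emph{every} $x\in X$, namely $\varrho^*(x)=\nabla_\TT(x)\setminus(\nabla_\TT(q)\cup Y)$. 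The interval $[z,q-1]$ typically contains vertices outside $\nabla_\TT(p')$---for instance, other left-of-$q'$ subtrees of $m$, and the portion of $\nabla_\TT(q')$ preceding $q$ in preorder---and for such vertices the ornament bound is genuinely nontrivial (the case $x\leq_\TT q$ in the paper's proof handles exactly these). If you set $\varrho^*(v)=\nabla_\TT(v)$ there, your candidate will no longer lie in the difference set, or else will fail to dominate some $(\widehat\lambda,\widehat\varrho)$.

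A second point your outline underspecifies is the linear extension. The paper does not build $\lambda^*$ from a forbidden-pair set $K$ directly; instead it takes $\lambda^*$ to be the weak-order maximum of
\[
\Xi=\{\sigma\in\LL(\TT^\times):X\prec_\sigma\varrho_1(q)\prec_\sigma Y\},
\]
so the entire block $\varrho_1(q)$ (not merely the vertex $q$) must be sandwiched between $X$ and $Y$. The verification that $\widehat\lambda\leq\lambda^*$ then reduces to showing $\widehat\lambda\in\Xi$, and the paper's argument for this uses the uninverted pair in $\varrho_1(p)\times\varrho_1(q)$ to pin down $u$ relative to $\varrho_1(q)$ in $\widehat\lambda$, then propagates this to all of $X$ and $Y$ via the chain structure and the consecutive-factor property of $\widehat\varrho(q)$. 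Your final paragraph anticipates this difficulty correctly, but the localization is to the \emph{top} element $u$ of $\varrho_1(p)$ together with the block $\varrho_1(q)$, not to a single pair.
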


\begin{proof}
Let $\mathsf{B}$ be the rightmost branch of $\TT$. Because $(\lambda_1,\varrho_1)$ and $(\lambda_2,\varrho_2)$ are related by a permutohedron move, we have $\varrho_1=\varrho_2$, and there exist vertices $p,q\in\VV^\times$ such that 
\begin{itemize}
\item $p$ and $q$ are incomparable in $\TT$; 
\item every number in $\varrho_1(p)$ is less than every number in $\varrho_1(q)$ in $\mathbb Z$; 
\item $\lambda_1\vert_{\varrho_1(p)}\lambda_1\vert_{\varrho_1(q)}$ is a consecutive factor of $\lambda_1$; 
\item $\lambda_2$ is obtained from $\lambda_1$ by swapping $\lambda_1\vert_{\varrho_1(p)}$ and $\lambda_1\vert_{\varrho_1(q)}$. 
\end{itemize}
The vertex $p$ cannot be in $\mathsf{B}$, so the sets $\varrho_1(p)$ and $C=\{v\in\VV^\times\setminus\mathsf{B}:v\leq_\TT p\}$ are both chains in $\TT$. 
Let $u$ be the maximum element of $\varrho_1(p)$, and let $z$ be the minimum element of $C$. Let \[X=\{x\in\VV^\times:z\leq x<q\text{ and }(x,q)\not\in\Inv(\lambda_1)\}\] and \[Y=\{y\in\VV^\times:z\leq y<q\text{ and }(y,q)\in\Inv(\lambda_1)\}.\] Because the elements of $\varrho_1(p)$ occur immediately before the elements of $\varrho_1(q)$ in $\lambda_1$, we have  \begin{equation}\label{eq:zY}
\nabla_\TT(u)\setminus\{u\}\subseteq Y.
\end{equation} Let $\varrho^*\in\OO(\TT^\times)$ be the ornamentation such that $\varrho^*(x)=\nabla_\TT(x)\setminus(\nabla_\TT(q)\cup Y)$ for all $x\in X$ and $\varrho^*(w)=\nabla_\TT(w)$ for all $w\in \VV^\times\setminus X$. Let \[\Xi=\{\sigma\in\LL(\TT^\times):X\prec_{\sigma}\varrho_1(q)\prec_{\sigma}Y\}.\] In the weak order, the set $\Xi$ has a unique maximal element $\lambda^*$. Moreover, $(\lambda^*,\varrho^*)\in\Theta(\TT^\times)$. We will prove that $(\lambda^*,\varrho^*)$ is the unique maximal element of $\nabla(\lambda_1,\varrho_1)\setminus\nabla(\lambda_2,\varrho_2)$. Choose an arbitrary element $(\widehat\lambda,\widehat\varrho)\in\nabla(\lambda_1,\varrho_1)\setminus\nabla(\lambda_2,\varrho_2)$; we will prove that $\widehat\lambda\leq\lambda^*$ and $\widehat\varrho\leq\varrho^*$. 

Let us start by proving that $\widehat\lambda\leq\lambda^*$; to do so, it suffices to show that $\widehat\lambda\in\Xi$. Because $\varrho_1=\varrho_2$, we have $\lambda_1\leq\widehat\lambda$ and $\lambda_2\not\leq\widehat\lambda$, so there must be an element of $\varrho_1(p)$ that precedes an element of $\varrho_1(q)$ in $\widehat\lambda$. We have $\varrho_1(p)\subseteq\widehat\varrho(p)$ and $\varrho_1(q)\subseteq\widehat\varrho(q)$, and the sets $\widehat\varrho(p)$ and $\widehat\varrho(q)$ are disjoint because $p$ and $q$ are incomparable in $\TT$. The fact that $(\widehat\lambda,\widehat\varrho)\in\Theta(\TT^\times)$ implies that $\widehat\varrho(p)\prec_{\widehat\lambda}\widehat\varrho(q)$, so $\varrho_1(p)\prec_{\widehat\lambda}\varrho_1(q)$. Consequently, $\{u\}\prec_{\widehat\lambda}\varrho_1(q)$. Let $x\in X$. If $z\leq_\TT x$, then it follows from \eqref{eq:zY} that $x\leq_\TT u$, so $\{x\}\prec_{\widehat\lambda}\varrho_1(q)$. If $z\not\leq_\TT x$, then $(u,x)$ is an inversion of $\lambda_1$, so it is also an inversion of $\widehat\lambda$, and again $\{x\}\prec_{\widehat\lambda}\varrho_1(q)$. This shows that $X\prec_{\widehat\lambda}\varrho_1(q)$.  Now let $y\in Y$. Since $(y,q)$ is an inversion of $\lambda_1$, it is also an inversion of $\widehat\lambda$. Because the elements of $\widehat\varrho(q)$ form a consecutive factor of $\widehat\lambda$, we find that $\widehat\varrho(q)\prec_{\widehat\lambda}\{y\}$. As $\varrho_1(q)\subseteq\widehat\varrho(q)$ and $y$ was an arbitrary element of $Y$, this proves that $\varrho_1(q)\prec_{\widehat\lambda}Y$. Hence, $\widehat\lambda\in\Xi$. 

Let us now prove that $\widehat\varrho\leq\varrho^*$. We have $\widehat\varrho(w)\subseteq\nabla_\TT(w)=\varrho^*(w)$ for all $w\in\VV^\times\setminus X$. Now consider $x\in X$; we will prove that $\widehat\varrho(x)\subseteq\nabla_\TT(x)\setminus(\nabla_\TT(q)\cup Y)=\varrho^*(x)$. The elements of $\widehat\varrho(x)$ form a consecutive factor of $\widehat\lambda$, and we know from the preceding paragraph that $\{x\}\prec_{\widehat\lambda}\varrho_1(q)\prec_{\widehat\lambda} Y$, so it suffices to show that $q\not\in\widehat\varrho(x)$. This is immediate if $x\not\leq_\TT q$, so assume $x\leq_\TT q$. Then $x\prec_{\lambda_1}q$. By the definition of $X$, we must have $x\not\leq_\TT p$, so $p\not\in\widehat\varrho(x)$. Since $\lambda_{1}\vert_{\varrho_1(p)}\lambda_1\vert_{\varrho_1(q)}$ is a consecutive factor of $\lambda_1$ and $x\not\in\varrho_1(p)$, we must have $(p,x)\in\Inv(\lambda_1)\subseteq\Inv(\widehat\lambda)$. This implies that $\widehat\varrho(x)\prec_{\widehat\lambda} \{p\}\prec_{\widehat\lambda} \{q\}$, so $q\not\in\widehat\varrho(x)$. 
\end{proof}

\begin{lemma}\label{lem:piece_4}
Suppose that every vertex of $\TT$ that is not in the rightmost branch of $\TT$ is covered by at most $1$ element of $\TT$. Let $(\lambda_1,\varrho_1)\lessdot(\lambda_2,\varrho_2)$ be a cover relation in $\Theta(\TT^\times)$. If $(\lambda_1,\varrho_1)$ and $(\lambda_2,\varrho_2)$ are related by a permutohedron move, then the set $\Delta(\lambda_2,\varrho_2)\setminus\Delta(\lambda_1,\varrho_1)$ has a unique minimal element. 
\end{lemma}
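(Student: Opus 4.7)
The plan is to construct an explicit minimum $(\lambda_*, \varrho_*)$ of $\Delta(\lambda_2, \varrho_2) \setminus \Delta(\lambda_1, \varrho_1)$ and verify its properties, paralleling \cref{lem:piece_3} in a dual manner. As in that lemma, the incomparability of $p$ and $q$ in $\TT$ combined with hypothesis (S5) forces the subtree of $\TT$ rooted at $p$ to be a chain, so $\varrho_1(p) = \{p, p+1, \ldots, u\}$ consists of consecutive labels with maximum $u$. The critical combinatorial fact for the proof is that $\lambda_1\vert_{\varrho_1(p)}\lambda_1\vert_{\varrho_1(q)}$ places $u$ immediately before $q$ in $\lambda_1$, so no element of $\VV^\times$ lies strictly between $u$ and $q$ in $\lambda_1$.

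I take $\varrho_* = \varrho_{\min}$ (sending each $v$ to $\{v\}$) and define $\lambda_*$ to be the minimum element, in the weak order, of the set $S = \{\sigma \in \LL(\TT^\times) : \sigma \leq \lambda_2 \text{ and } (u, q) \in \Inv(\sigma)\}$. To justify existence and uniqueness of this minimum, I will show $S$ is closed under the weak-order meet. Given $\sigma, \sigma' \in S$, the meet has inversion set equal to the largest biclosed subset of $\Inv(\sigma) \cap \Inv(\sigma')$; to verify that $(u, q)$ survives, it suffices to check the co-closedness of this intersection at $(u, q)$, which requires that for each $v$ with $u < v < q$ at least one of $(u, v)$ or $(v, q)$ lies in $\Inv(\sigma) \cap \Inv(\sigma')$. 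Because $u$ and $q$ are adjacent in $\lambda_1$, every such $v$ appears in $\lambda_1$ either strictly before $\varrho_1(p)$---in which case $(u, v) \in \Inv(\lambda_1) \subseteq \Inv(\lambda_2)$ while $(v, q) \notin \Inv(\lambda_2)$---or strictly after $\varrho_1(q)$, in which case the reverse holds. The co-closedness of each individual $\Inv(\sigma)$ and $\Inv(\sigma')$ at $(u, q)$ then forces the corresponding pair of $(u, v), (v, q)$ (the one that is in $\Inv(\lambda_2)$) to lie in both, hence in their intersection.

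The verification then hinges on the reduction: $(\widehat\lambda, \widehat\varrho) \leq (\lambda_2, \varrho_2)$ lies in $\Delta(\lambda_2, \varrho_2) \setminus \Delta(\lambda_1, \varrho_1)$ if and only if $(u, q) \in \Inv(\widehat\lambda)$. The reverse direction is immediate from $(u, q) \notin \Inv(\lambda_1)$. For the forward direction, $\widehat\varrho \leq \varrho_1$ forces any witness of $\widehat\lambda \not\leq \lambda_1$ to lie in $\Inv(\lambda_2) \setminus \Inv(\lambda_1) = \{(i, j) : i \in \varrho_1(p),\, j \in \varrho_1(q)\}$; then $q \leq_\TT j$ and $i \leq_\TT u$, together with the linear extension property and $(i, j) \in \Inv(\widehat\lambda)$, yield $q \preceq_{\widehat\lambda} j \prec_{\widehat\lambda} i \preceq_{\widehat\lambda} u$, whence $(u, q) \in \Inv(\widehat\lambda)$. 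With this reduction in hand, $(\lambda_*, \varrho_*) \in \Theta(\TT^\times)$ is trivial from $\varrho_* = \varrho_{\min}$, membership of $(\lambda_*, \varrho_*)$ in the difference set is by construction, and for every $(\widehat\lambda, \widehat\varrho)$ in the difference set the inequality $\varrho_* \leq \widehat\varrho$ is automatic while $\lambda_* \leq \widehat\lambda$ holds because $\widehat\lambda \in S$. The main obstacle is the meet-closure step, which crucially exploits the adjacency of $u$ and $q$ in $\lambda_1$ to establish the complementary status of $(u, v)$ and $(v, q)$ in $\Inv(\lambda_2)$; without this dichotomy the biclosure argument would fail and $S$ might not have a unique minimum.
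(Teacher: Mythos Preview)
Your overall strategy matches the paper's: take $\varrho_*=\varrho_{\min}$, reduce membership in $\Delta(\lambda_2,\varrho_2)\setminus\Delta(\lambda_1,\varrho_1)$ to the single condition $(u,q)\in\Inv(\widehat\lambda)$, and show the set $S=\{\sigma\in\LL(\TT^\times):\sigma\le\lambda_2,\ (u,q)\in\Inv(\sigma)\}$ has a minimum. Your reduction argument is exactly the paper's, and your observation that $u$ and $q$ are adjacent in $\lambda_1$ (hence each $v$ with $u<v<q$ lies either entirely before $\varrho_1(p)$ or entirely after $\varrho_1(q)$) is precisely the dichotomy the paper encodes in the sets $Z_{\mathsf L}, Z_{\mathsf R}$.

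There is, however, a genuine gap in your meet-closure step. You assert that ``to verify that $(u,q)$ survives, it suffices to check the co-closedness of this intersection at $(u,q)$.'' This is not a valid general principle: for a closed set $I$, co-closedness at a single pair does \emph{not} guarantee that pair lies in the largest biclosed subset, because removing other elements (for their own co-closedness failures) can cascade back to $(u,q)$. For instance, $I=\{(1,4),(1,3),(2,4)\}$ is closed and co-closed at $(1,4)$, yet its largest biclosed subset is empty. What saves you here is extra structure you have already identified but not fully exploited: from $(u,q)\in\Inv(\sigma)$, the dichotomy, and $\Inv(\sigma)\subseteq\Inv(\lambda_2)$ you get $(u,j)\in\Inv(\sigma)$ for every $j\in Z_{\mathsf L}$ and $(i,q)\in\Inv(\sigma)$ for every $i\in Z_{\mathsf R}$; then for $i\in Z_{\mathsf R}$, $j\in Z_{\mathsf L}$ with $i<j$, the non-inversions $(u,i),(j,q)\notin\Inv(\lambda_2)\supseteq\Inv(\sigma)$ force $j\preceq_\sigma q\prec_\sigma u\preceq_\sigma i$, whence $(i,j)\in\Inv(\sigma)$. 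This shows the biclosed set $\{(i,j):i\in Z_{\mathsf R},\,j\in Z_{\mathsf L},\,i<j\}$ lies in $\Inv(\sigma)\cap\Inv(\sigma')$, so $(u,q)$ does survive in the meet. The paper does exactly this computation, packaging it as an explicit $\lambda_*=1\cdots(u-1)\,\zeta_{\mathsf L}\zeta_{\mathsf R}\,(q+1)\cdots n$ and verifying $\lambda_*\le\widehat\lambda$ directly; your abstract meet-closure route is equivalent once this missing step is supplied.
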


\begin{proof}
Because $(\lambda_1,\varrho_1)$ and $(\lambda_2,\varrho_2)$ are related by a permutohedron move, we have $\varrho_1=\varrho_2$, and there exist vertices $p,q\in\VV^\times$ such that 
\begin{itemize}
\item $p$ and $q$ are incomparable in $\TT$; 
\item every number in $\varrho_1(p)$ is less than every number in $\varrho_1(q)$ in $\mathbb Z$; 
\item $\lambda_1\vert_{\varrho_1(p)}\lambda_1\vert_{\varrho_1(q)}$ is a consecutive factor of $\lambda_1$; 
\item $\lambda_2$ is obtained from $\lambda_1$ by swapping $\lambda_1\vert_{\varrho_1(p)}$ and $\lambda_1\vert_{\varrho_1(q)}$. 
\end{itemize}
The vertex $p$ cannot belong to the rightmost branch of $\TT$, so $\varrho_1(p)$ (viewed as a subposet of $\TT$) is a chain. Let $u$ be the maximum element of $\varrho_1(p)$. Let \[Z_{\mathsf{L}}=\{i\in\VV^\times: u\leq i\leq q \text{ and }i\preceq_{\lambda_2}q\}\quad\text{and}\quad Z_{\mathsf{R}}=\{i\in\VV^\times: u\leq i\leq q \text{ and }u\preceq_{\lambda_2}i\}.\]
Let $\zeta_{\mathsf{L}}$ (respectively, $\zeta_{\mathsf{R}}$) be the word obtained by writing the elements of $Z_{\mathsf{L}}$ (respectively, $Z_{\mathsf{R}}$) in increasing order. Let $\lambda_*$ be the permutation \[12\cdots (u-1)\,\zeta_{\mathsf{L}}\,\zeta_{\mathsf{R}}\,(q+1)\cdots (n-1)n.\] It is straightforward to check that $\lambda_*\leq\lambda_2$. Because $\LL(\TT^\times)$ is an interval in the weak order whose minimum element is the identity permutation, it must contain $\lambda_*$. Recall that the unique minimal element $\varrho_{\min}$ of $\OO(\TT^\times)$ is defined so that $\varrho_{\min}(v)=\{v\}$ for all $v\in\VV^\times$. Note that $(\lambda_*,\varrho_{\min})\in\Theta(\TT^\times)$ and that $(\lambda_*,\varrho_{\min})\in\Delta(\lambda_2,\varrho_2)$. Because $(u,q)\in\Inv(\lambda_*)\setminus\Inv(\lambda_1)$, we have $(\lambda_*,\varrho_{\min})\in\Delta(\lambda_2,\varrho_2)\setminus\Delta(\lambda_1,\varrho_1)$. We will prove that $(\lambda_*,\varrho_{\min})$ is the unique minimal element of $\Delta(\lambda_2,\varrho_2)\setminus\Delta(\lambda_1,\varrho_1)$. Choose an arbitrary element $(\widehat\lambda,\widehat\varrho)\in\Delta(\lambda_2,\varrho_2)\setminus\Delta(\lambda_1,\varrho_1)$; we know already that $\varrho_{\min}\leq\widehat\varrho$, so we just need to prove that $\lambda_*\leq\widehat\lambda$. 

Let $(i,j)$ be an inversion of $\lambda_*$; our goal is to show that $(i,j)$ is also an inversion of $\widehat\lambda$. Because $\varrho_1=\varrho_2$, we must have $\widehat\lambda\leq\lambda_2$ and $\widehat\lambda\not\leq\lambda_1$. This means that there is an inversion $(a,b)$ of $\widehat\lambda$ that is also an inversion of $\lambda_2$ but not of $\lambda_1$. We must have $a\in\varrho_1(p)$ and $b\in\varrho_1(q)$. Then $a\leq_\TT u$ and $q\leq_\TT b$, so $a\preceq_{\widehat\lambda}u$ and $q\preceq_{\widehat\lambda}b$. Because $(i,j)\in\Inv(\lambda_*)$, we must have $i\in Z_{\mathsf{R}}$ and $j\in Z_{\mathsf{L}}$. This implies that $(u,i)$ and $(j,q)$ are not inversions of $\lambda_2$, so they are also not inversions of $\widehat\lambda$. Hence, $j\preceq_{\widehat\lambda}q\preceq_{\widehat\lambda}b\prec_{\widehat\lambda}a\preceq_{\widehat\lambda}u\preceq_{\widehat\lambda}i$. This shows that $(i,j)$ is an inversion of $\widehat\lambda$, as desired.  
\end{proof}

\begin{example}\label{exam:semidistributivity2}
Let $\varrho_1=\varrho_2\in\OO(\TT^\times)$ be the ornamentation shown in \cref{fig:exam:semidistributivity2}, where $\TT^\times$ is as depicted. In the tree $\TT$ (which is obtained by adding a new root vertex to $\TT^\times$), every vertex that is not in the rightmost branch is covered by at most $1$ element. Let \[\lambda_1=1,2,4,8,9,12,13,18,22,23,25,24,16,14,5,6,19,20,17,21,3,7,15,10,11\] and 
\[\lambda_2=1,2,4,8,9,12,13,18,22,23,25,24,16,14,19,20,5,6,17,21,3,7,15,10,11.\] Then $(\lambda_1,\varrho_1)\lessdot(\lambda_2,\varrho_2)$ is a cover relation in $\Theta(\TT^\times)$ that corresponds to a permutohedron move. In the notation of the proofs of \cref{lem:piece_3,lem:piece_4}, we have $p=5$, $q=19$, $u=6$, and $z=4$. The sets $X$ and $Y$ defined in the proof of \cref{lem:piece_3} are 
\[X=\{4,5,6,8,9,12,13,14,16,18\}\quad\text{and}\quad Y=\{7,10,11,15,17\}.\] The sets $Z_{\mathsf{L}}$ and $Z_{\mathsf{R}}$ defined in the proof of \cref{lem:piece_4} are 
\[Z_{\mathsf{L}}=\{8,9,12,13,14,16,18,19\}\quad\text{and}\quad Z_{\mathsf{R}}=\{6,7,10,11,15,17\}.\] The pairs $(\lambda^*,\varrho^*)$ and $(\lambda_*,\varrho_{\min})$ constructed in the proofs of \cref{lem:piece_3,lem:piece_4} are shown on the top and bottom, respectively, in \cref{fig:exam:semidistributivity2_2}. In the top image in \cref{fig:exam:semidistributivity2_2}, the sets $X$ and $Y$ are represented in green and purple, respectively. 
\end{example}

\begin{figure}[ht]
\begin{center}
\includegraphics[height=9.070cm]{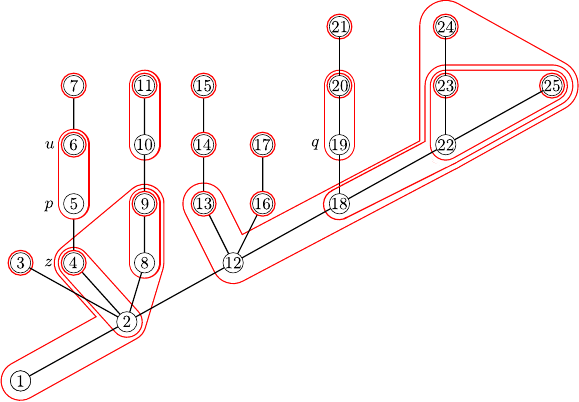}
\end{center}
  \caption{An ornamentation of a forest poset $\TT^\times$. Every vertex of $\TT$ that is not in the rightmost branch of $\TT$ is covered by at most $1$ element.  }\label{fig:exam:semidistributivity2}
\end{figure}

\begin{figure}[]
\begin{center}
\includegraphics[height=10.168cm]{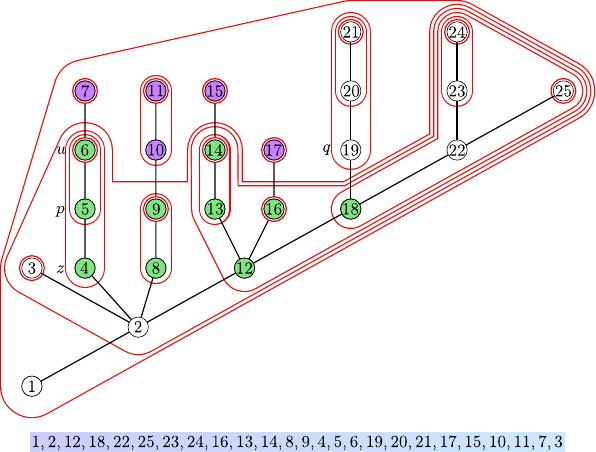}\end{center}
\vspace{0.4cm}
\begin{center}
\includegraphics[height=9.771cm]{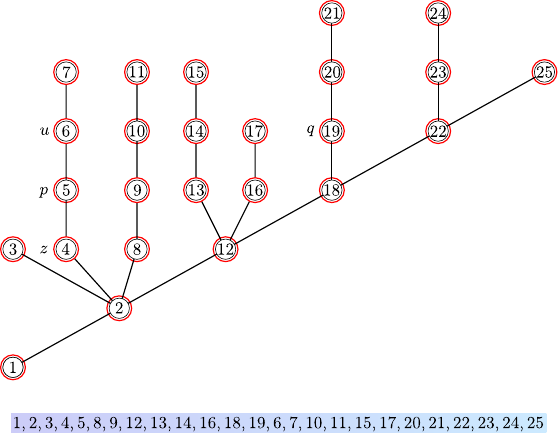}
\end{center}
  \caption{The pairs $(\lambda^*,\varrho^*)$ (top) and $(\lambda_*,\varrho_{\min})$ (bottom) constructed in the proofs of \cref{lem:piece_3,lem:piece_4}, where $(\lambda_1,\varrho_1)\lessdot(\lambda_2,\varrho_2)$ is the cover relation defined in \cref{exam:semidistributivity2} (with $\varrho_1=\varrho_2$ appearing in \cref{fig:exam:semidistributivity2}). In the top image, the elements of $X$ are colored green, while the elements of $Y$ are colored purple.}\label{fig:exam:semidistributivity2_2}
\end{figure}

We can now tie together all of the pieces established so far in this section to prove \cref{thm:semidistributive}. 

\begin{proof}[Proof of \cref{thm:semidistributive}]
Recall that our goal is to prove the equivalence of the five items \ref{S1}, \ref{S2}, \ref{S3}, \ref{S4}, \ref{S5} listed in the statement of the theorem. By the definition of semidistributivity, item \ref{S1} holds if and only if items \ref{S2} and \ref{S3} both hold. It is also straightforward to see that items \ref{S4} and \ref{S5} are equivalent. 

The Hasse diagram of the operahedron lattice of $\begin{array}{l}\includegraphics[height=0.5cm]{OperahedronPIC3}\end{array}$ is   
shown in \cref{fig:operahedron1}. Upon inspecting this figure, we find that the set of maximal nestings $\NN$ of $\begin{array}{l}\includegraphics[height=0.5cm]{OperahedronPIC3}\end{array}$ satisfying 
\[\begin{array}{l}\includegraphics[height=1.5cm]{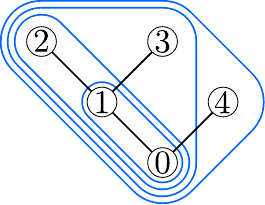}\end{array}\leq\NN\quad\text{and}\quad\begin{array}{l}\includegraphics[height=1.5cm]{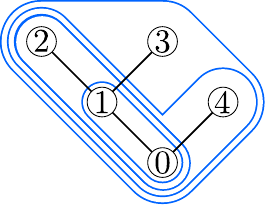}\end{array}\not\leq\NN\]
does not have a unique maximal element. Therefore, it follows from \cref{prop:Barnard} that the operahedron lattice of $\begin{array}{l}\includegraphics[height=0.5cm]{OperahedronPIC3}\end{array}$ is not meet-semidistributive. Similarly, the set of maximal nestings $\NN$ of $\begin{array}{l}\includegraphics[height=0.5cm]{OperahedronPIC3}\end{array}$ satisfying 
\[\NN\leq\begin{array}{l}\includegraphics[height=1.5cm]{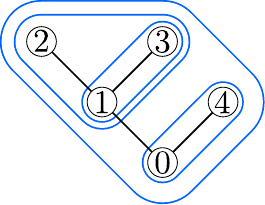}\end{array}\quad\text{and}\quad\NN\not\leq\begin{array}{l}\includegraphics[height=1.5cm]{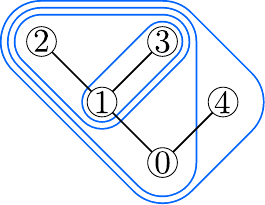}\end{array}\]
does not have a unique minimal element, so it follows from \cref{prop:Barnard} that the operahedron lattice of $\begin{array}{l}\includegraphics[height=0.5cm]{OperahedronPIC3}\end{array}$ is not join-semidistributive. Intervals of meet-semidistributive lattices are meet-semidistributive, so we can appeal to \cref{prop:interval} to see that item \ref{S2} implies item \ref{S4}. Likewise, intervals of join-semidistributive lattices are join-semidistributive, so item \ref{S3} implies item \ref{S4}. 

Finally, it follows from \cref{prop:isomorphism,prop:Barnard,lem:piece_1,lem:piece_2,lem:piece_3,lem:piece_4} that item \ref{S5} implies both items \ref{S2} and \ref{S3}. 
\end{proof} 

\begin{remark}\label{rem:explicit_join-irr}
Suppose $L$ is a semidistributive lattice. One can show that an element $j\in L$ is join-irreducible if and only if there exists a cover relation $x\lessdot y$ such that $j$ is the unique minimal element of $\Delta_L(y)\setminus\Delta_L(x)$. Likewise, an element $m\in L$ is meet-irreducible if and only if there exists a cover relation $x\lessdot y$ such that $m$ is the unique maximal element of $\nabla_L(x)\setminus\nabla_L(y)$.
Therefore, if $\TT$ is a rooted plane tree such that $\MN(\TT)$ (equivalently, $\Theta(\TT^\times)$) is semidistributive, then our proofs of \cref{lem:piece_1,lem:piece_2,lem:piece_3,lem:piece_4} provide explicit descriptions of the join-irreducble elements and the meet-irreducible elements of $\Theta(\TT^\times)$ (and, hence, also of $\MN(\TT)$). 
\end{remark}

\section{Trimness}\label{sec:trim}

We now prove \cref{thm:trim}, which characterizes the rooted plane trees whose operahedron lattices are trim. 

\begin{proof}[Proof of \cref{thm:trim}]
The operahedron lattice of the claw $\begin{array}{l}\includegraphics[height=0.3cm]{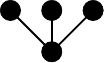}\end{array}$ (see the right side of \cref{fig:Tamari_weak}) is not trim because its height is $3$ and it has $4$ join-irreducible elements. One can check by hand that the operahedron lattices of the trees $\begin{array}{l}\includegraphics[height=0.3cm]{OperahedronPIC20}\end{array}$ and $\begin{array}{l}\includegraphics[height=0.5cm]{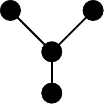}\end{array}$ are isomorphic to each other. It is known \cite{Thomas}*{Theorem~1} that intervals of trim lattices are trim. Therefore, it follows from \cref{prop:interval} that if $\MN(\TT)$ is trim, then $\TT$ does not contain $\begin{array}{l}\includegraphics[height=0.3cm]{OperahedronPIC20}\end{array}$ or $\begin{array}{l}\includegraphics[height=0.5cm]{OperahedronPIC21}\end{array}$. Consequently, if $\MN(\TT)$ is trim, then the root of $\TT$ is covered by at most $2$ elements of $\TT$ and every non-root vertex in $\TT$ is covered by at most $1$ element of $\TT$.  

Now assume that the root of $\TT$ is covered by at most $2$ elements of $\TT$ and that every non-root vertex in $\TT$ is covered by at most $1$ element of $\TT$; we will prove that $\MN(\TT)$ is trim. Let us identify the vertex set of $\TT$ with $\{0,1,\ldots,n\}$ so that $0,1,\ldots,n$ is the preorder traversal of $\TT$. Let $d$ be the largest element of $[n]$ such that $1\leq_\TT d$. If $d=n$, then $\TT$ is a chain, so $\MN(\TT)$ is a Tamari lattice, which is known to be trim. Therefore, we may assume $d\leq n-1$. Then the root of $\TT$ is covered by $1$ and $d+1$. Let $\TT_1$ and $\TT_{d+1}$ be the subtrees of $\TT$ with roots $1$ and $d+1$, respectively. Then $\TT_1$ is a chain with $d$ vertices, while $\TT_{d+1}$ is a chain with $n-d$ vertices. It is known \cite{RiSM}*{Theorem~1.4} that every semidistributive extremal lattice is trim. Since we already know by \cref{thm:semidistributive} that $\MN(\TT)$ is semidistributive, we just need to prove that $\MN(\TT)$ is extremal. It is also known \cite{Free}*{Corollary~2.55} that a semidistributive lattice has the same number of join-irreducible elements as meet-irreducible elements. Therefore, appealing to \cref{prop:isomorphism}, we see that it suffices to prove that $\mathrm{height}(\Theta(\TT^\times))=|\mathcal J_{\Theta(\TT^\times)}|$, where $\mathcal J_{\Theta(\TT^\times)}$ is the set of join-irreducible elements of $\Theta(\TT^\times)$. 

As mentioned in \cref{rem:explicit_join-irr}, an element $(\lambda_*,\varrho_*)\in\Theta(\TT^\times)$ is join-irreducible if and only if it is one of the elements constructed in the proof of \cref{lem:piece_2} or the proof of \ref{lem:piece_4}. Upon inspecting those proofs, we find that $(\lambda_*,\varrho_*)$ is join-irreducible if and only if one of the following (mutually exclusive) conditions holds: 
\begin{enumerate}[(i)]
\item\label{item:trim_proof1} $\varrho_*$ is a join-irreducible element of $\OO(\TT^\times)$, and $\lambda_*$ is the unique minimal element (in the weak order) of the set $\{\sigma\in\LL(\TT^\times):(\sigma,\varrho_*)\in\Theta(\TT^\times)\}$;
\item\label{item:trim_proof2} $\varrho_*=\varrho_{\min}$, and $\lambda_*$ is a join-irreducible element of $\LL(\TT^\times)$.
\end{enumerate}
Note that $\OO(\TT^\times)$ is isomorphic to $\OO(\TT_1)\times\OO(\TT_{d+1})$. Moreover, $\OO(\TT_1)$ (respectively, $\OO(\TT_{d+1})$) is isomorphic to the $d$-th (respectively, $(n-d)$-th) Tamari lattice. It is well known \cite{Geyer}*{Proposition~2.3} that the $m$-th Tamari lattice has $\binom{m}{2}$ join-irreducible elements. Hence, it follows from \eqref{eq:product_join-irr} that the number of pairs $(\lambda_*,\varrho_*)$ satisfying the condition \ref{item:trim_proof1} is $\binom{d}{2}+\binom{n-d}{2}$. The join-irreducible elements of $\LL(\TT^\times)$ are the permutations of the form \[12\cdots a\,(d+1)(d+2)\cdots b\,(a+1)(a+2)\cdots d\,(b+1)(b+2)\cdots n,\] where $0\leq a\leq d-1$ and $d+1\leq b\leq n$. The number of such permutations, which is also the number of pairs $(\lambda_*,\varrho_*)$ satisfying condition \ref{item:trim_proof2}, is $d(n-d)$. Therefore, \[|\mathcal J_{\Theta(\TT^\times)}|=\textstyle\binom{d}{2}+\binom{n-d}{2}+d(n-d)=\binom{n}{2}.\] 

We are left to show that $\mathrm{height}(\Theta(\TT^\times))=\binom{n}{2}$. Since the height of a lattice is always at most the number of join-irreducible elements of the lattice, it suffices to construct a chain in $\Theta(\TT^\times)$ of length $\binom{n}{2}$. The maximal element of $\LL(\TT^\times)$ is the permutation $\lambda_{\max}=(d+1)(d+2)\cdots n\,12\cdots d$, which has $d(n-d)$ inversions. Therefore, $\LL(\TT^\times)$ has a maximal chain $\lambda_0\lessdot\lambda_1\lessdot\cdots\lessdot\lambda_{d(n-d)}$ of length $d(n-d)$, where $\lambda_0=12\cdots n$ and $\lambda_{d(n-d)}=\lambda_{\max}$. Note that 
\begin{equation}\label{eq:chain1}(\lambda_0,\varrho_{\min})<(\lambda_1,\varrho_{\min})<\cdots<(\lambda_{d(n-d)},\varrho_{\min})
\end{equation} is a chain in $\Theta(\TT^\times)$. We have \[\mathrm{height}(\OO(\TT^\times))=\mathrm{height}(\OO(\TT_1)\times\OO(\TT_{d+1}))=\mathrm{height}(\OO(\TT_1))+\mathrm{height}(\OO(\TT_{d+1}))=\textstyle \binom{d}{2}+\binom{n-d}{2},\] where the last equality follows from the fact that the $m$-th Tamari lattice has height $\binom{m}{2}$. Hence, $\OO(\TT^\times)$ has a maximal chain $\varrho_0\lessdot\varrho_1\lessdot\cdots\lessdot\varrho_{M}$, where $\varrho_0=\varrho_{\min}$ and $M=\binom{d}{2}+\binom{n-d}{2}=\binom{n}{2}-d(n-d)$. For every $\varrho\in\OO(\TT^\times)$, the pair $(\lambda_{\max},\varrho)$ is in $\Theta(\TT^\times)$. 
Therefore,  
\begin{equation}\label{eq:chain2}(\lambda_{\max},\varrho_0)<(\lambda_{\max},\varrho_1)<\cdots<(\lambda_{\max},\varrho_{M})
\end{equation}
is a chain in $\Theta(\TT^\times)$. By concatenating the chains in \eqref{eq:chain1} and \eqref{eq:chain2}, we obtain a chain in $\Theta(\TT^\times)$ of length $\binom{n}{2}$. 
\end{proof}

\section{Stacks and Brooms}\label{sec:stacks}

Let $\mathcal W$ denote the set of finite words over the alphabet of positive integers in which no letter appears more than once. West's \dfn{stack-sorting map} is the function $\s\colon\mathcal W\to\mathcal W$ defined recursively as follows.\footnote{The stack-sorting map can also be defined via a procedure that sends a word through a \emph{stack} in a right-greedy manner. Alternatively, it can be defined using postorder and in-order traversals of decreasing binary plane trees. See, e.g., \cites{Bona_survey_2019,DefantThesis}.} As a base case, we define $\s(\epsilon)=\epsilon$, where $\epsilon$ is the empty word. Now, if $\sigma\in\mathcal W$ is nonempty, then we can write $\sigma=\mathsf{L}n\mathsf{R}$, where $n$ is the largest letter in $\sigma$. With this notation, we define $\s(\sigma)=\s(\mathsf{L})\s(\mathsf{R})n$. For example, \[\s(316452)=\s(31)\,\s(452)\,6=\s(1)\,3\,\s(4)\,\s(2)\,56=134256.\] We usually restrict the stack-sorting map to the symmetric group $\mathfrak S_n$ and view it as a function $\s\colon\mathfrak S_n\to\mathfrak S_n$. 

The following lemma is well known and follows readily from the definition of $\s$. 

\begin{lemma}\label{lem:stack_inversions}
Let $1\leq a<b\leq n$. For $\sigma\in\mathfrak S_n$, we have $(a,b)\in\Inv(\s(\sigma))$ if and only if there exists $c\in[n]$ such that $b<c$ and $b\prec_{\sigma} c\prec_{\sigma} a$.
\end{lemma}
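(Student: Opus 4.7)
The plan is to induct on $n$, exploiting the recursion $\s(\sigma) = \s(\mathsf{L})\,\s(\mathsf{R})\,n$, where $\sigma = \mathsf{L}\,n\,\mathsf{R}$ with $n$ the largest letter of $\sigma$. The base case $n\le 1$ is vacuous since no pair $a<b$ exists. For the inductive step, first I would dispose of the degenerate values of $a$ and $b$: the case $b=n$ fails on both sides of the biconditional (on the inversion side since $n$ is the last entry of $\s(\sigma)$ and so can never precede $a$; on the witness side since $b<c$ would force $c>n$), and $a=n$ is excluded by $a<b$. Thus both $a$ and $b$ lie in $\mathsf{L}\cup\mathsf{R}$.

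Next I would split into four cases according to which block ($\mathsf{L}$ or $\mathsf{R}$) contains each of $a$ and $b$. In the two ``cross'' cases both sides can be checked directly. If $b\in\mathsf{L}$ and $a\in\mathsf{R}$, then $b$ lies in $\s(\mathsf{L})$ and $a$ in $\s(\mathsf{R})$, so $(a,b)\in\Inv(\s(\sigma))$, and $c=n$ supplies a witness since $b\prec_\sigma n\prec_\sigma a$. Dually, if $b\in\mathsf{R}$ and $a\in\mathsf{L}$, then $a$ precedes $b$ in $\s(\sigma)$, and no candidate $c$ in $\mathsf{L}$, in $\mathsf{R}$, or equal to $n$ can simultaneously satisfy $b\prec_\sigma c$ and $c\prec_\sigma a$ (each of the three locations kills one of the two constraints).

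In the two ``same-block'' cases, the factorization $\s(\sigma)=\s(\mathsf{L})\,\s(\mathsf{R})\,n$ shows that the relative order of $a$ and $b$ in $\s(\sigma)$ coincides with their order in $\s(\mathsf{L})$ (respectively $\s(\mathsf{R})$). After an order-preserving relabeling of the sub-word, the inductive hypothesis produces the biconditional with the witness $c$ constrained to the same block. To close the loop, I would verify that no witness $c$ in the opposite block, and no witness $c=n$, can arise: for instance with $a,b\in\mathsf{L}$, any $c\in\mathsf{R}\cup\{n\}$ satisfies $a\prec_\sigma c$, contradicting the requirement $c\prec_\sigma a$; the same-block case $a,b\in\mathsf{R}$ is symmetric. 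I do not expect any genuine obstacle—this is a routine structural induction driven by the recursive definition of $\s$—and the only point demanding care is precisely this bookkeeping that the set of witnesses in $\sigma$ coincides with the set of witnesses inside the relevant sub-word.
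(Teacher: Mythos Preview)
Your induction on $n$ via the decomposition $\sigma=\mathsf{L}\,n\,\mathsf{R}$ is correct and complete: the base case and the degenerate case $b=n$ are handled properly, the two cross-block cases are checked directly, and in the same-block cases you correctly invoke the inductive hypothesis on the relevant sub-word and rule out witnesses from the other block (and $c=n$). The paper itself gives no proof of this lemma, simply asserting that it is well known and follows readily from the recursive definition of $\s$; your argument is exactly the routine structural induction one would expect, so there is nothing substantive to compare.
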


Recall the definition of the broom $\Broom_{k,n}$ from \cref{sec:intro}. Let \[\wo(k,n)=k(k-1)\cdots 1(k+1)(k+2)\cdots n\in \mathfrak S_n.\] Note that 
\begin{equation}\label{eq:Delta_inversions}
\Delta_{\Weak(\mathfrak S_n)}(\wo(k,n))=\{w\in\mathfrak S_n:j\leq k\text{ for all }(i,j)\in\Inv(w)\}.
\end{equation}
Our goal in this section is to prove \cref{thm:stack-sorting}, which states that $\MN(\Broom_{k,n})$ is isomorphic to the subposet $\s^{-1}(\Delta_{\Weak(\mathfrak S_n)}(\wo(k,n)))$ of $\Weak(\mathfrak S_n)$. 

As usual, let us identify the vertex set of $\Broom_{k,n}$ with $\{0,1,\ldots,n\}$ so that $0,1,\ldots,n$ is the preorder traversal. Consider a pair $(\lambda,\varrho)\in\Theta(\Broom_{k,n}^\times)$. Let us write $\lambda=w_1\cdots w_n$. For each $u\in[n]$, let $A_\varrho(u)=\{j\in[n]:u\in\varrho(n+1-j)\}$. Let $B_{(\lambda,\varrho)}(w_\ell)=A_{\varrho}(w_\ell)\setminus\bigcup_{i=\ell+1}^{n}A_{\varrho}(w_i)$. Let us write $\mu_{(\lambda,\varrho)}(w_\ell)$ for the word obtained by reading the elements of $B_{(\lambda,\varrho)}(w_\ell)$ in decreasing order. Finally, let \[\Omega(\lambda,\varrho)=\mu_{(\lambda,\varrho)}(w_n)\mu_{(\lambda,\varrho)}(w_{n-1})\cdots \mu_{(\lambda,\varrho)}(w_1)\in \mathfrak S_n.\] 

\begin{example}
Let $k=4$ and $n=9$. Let $\lambda=123459867$, and let $\varrho$ be the ornamentation of $\Broom_{4,9}^\times$ depicted in \cref{fig:ornamented_broom}. Note that $(\lambda,\varrho)\in\Theta(\Broom_{4,9}^\times)$. We have 
\begin{alignat*}{3}
&A_\varrho(1)=\{9\}, &&A_{\varrho}(2)=\{8,9\}, &&A_{\varrho}(3)=\{7\}, \\
&A_\varrho(4)=\{6,7\},\quad &&A_{\varrho}(5)=\{5,6,7\},\quad &&A_{\varrho}(6)=\{4,7\}, \\ 
&A_\varrho(7)=\{3\}, &&A_{\varrho}(8)=\{2,6,7\}, &&A_{\varrho}(9)=\{1,6,7\},
\end{alignat*} so 
\begin{alignat*}{3}
&B_{(\lambda,\varrho)}(w_9)=B_{(\lambda,\varrho)}(7)=\{3\},\quad &&B_{(\lambda,\varrho)}(w_8)=B_{(\lambda,\varrho)}(6)=\{4,7\},\quad &&B_{(\lambda,\varrho)}(w_7)=B_{(\lambda,\varrho)}(8)=\{2,6\}, \\ 
&B_{(\lambda,\varrho)}(w_6)=B_{(\lambda,\varrho)}(9)=\{1\}, &&B_{(\lambda,\varrho)}(w_5)=B_{(\lambda,\varrho)}(5)=\{5\}, &&B_{(\lambda,\varrho)}(w_4)=B_{(\lambda,\varrho)}(4)=\emptyset, \\
&B_{(\lambda,\varrho)}(w_3)=B_{(\lambda,\varrho)}(3)=\emptyset, &&B_{(\lambda,\varrho)}(w_2)=B_{(\lambda,\varrho)}(2)=\{8,9\}, &&B_{(\lambda,\varrho)}(w_1)=B_{(\lambda,\varrho)}(1)=\emptyset. 
\end{alignat*}
Therefore, $\Omega(\lambda,\varrho)=374621598$. 
\end{example}

\begin{figure}[ht]
\begin{center}\includegraphics[height=7.490cm]{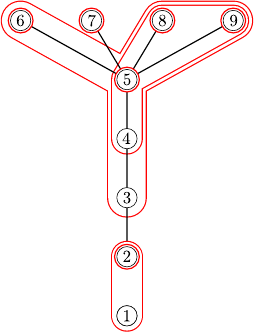}
  \end{center}
  \caption{An ornamentation of $\Broom_{4,9}^\times$.}\label{fig:ornamented_broom}
\end{figure}

\begin{lemma}\label{lem:Omega_inversions}
Fix positive integers $k\leq n$, and let $(i,j)$ be a pair such that $1\leq i<j\leq n$. Let $(\lambda,\varrho)\in\Theta(\Broom_{k,n}^\times)$. If $j\leq k$, then $(i,j)\in\Inv(\Omega(\lambda,\varrho))$ if and only if $(n+1-j,n+1-i)\in\Inv(\lambda)$. If $j\geq k+1$, then $(i,j)\in\Inv(\Omega(\lambda,\varrho))$ if and only if $n+1-i\in\varrho(n+1-j)$. 
\end{lemma}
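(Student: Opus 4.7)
The plan is to first reduce the question to a statement about positions in $\lambda$. For each $u\in[n]$, let $\ell_u$ denote the position in $\lambda$ of the rightmost element of the ornament $\varrho(n+1-u)$. Unpacking the definitions of $A_{\varrho}$ and $B_{(\lambda,\varrho)}$, one sees that $u$ lies in exactly one block $\mu_{(\lambda,\varrho)}(w_{\ell_u})$ of $\Omega(\lambda,\varrho)$. Since the blocks are concatenated in the order $\ell=n,n-1,\ldots,1$ and each block is read in decreasing order, for $1\le i<j\le n$ we have $(i,j)\in\Inv(\Omega(\lambda,\varrho))$ if and only if $\ell_j\ge\ell_i$. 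The lemma therefore reduces to characterizing this inequality in the two cases.

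For the case $j\le k$, both $v_i:=n+1-i$ and $v_j:=n+1-j$ strictly exceed $n-k$, so they are leaves of the claw in $\Broom_{k,n}$, and hence maximal in the tree order. Each such vertex admits only the singleton ornament, so $\varrho(v_i)=\{v_i\}$ and $\varrho(v_j)=\{v_j\}$; in particular $\ell_u$ is simply the position of $v_u$ in $\lambda$. Because $v_i\ne v_j$, the condition $\ell_j\ge\ell_i$ becomes $\ell_j>\ell_i$, which is equivalent to $v_i$ preceding $v_j$ in $\lambda$. As $v_j<v_i$, this is exactly the statement that $(n+1-j,n+1-i)\in\Inv(\lambda)$.

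For the case $j\ge k+1$, the vertex $v_j$ lies on the chain portion of $\Broom_{k,n}$, i.e., $v_j\le n-k$. The key structural fact is that $v_j$ is strictly below $v_i$ in the tree order: if $v_i$ is also a chain vertex, this follows from $v_i>v_j$ and the total order on the chain; if $v_i$ is a claw leaf, it follows because every chain vertex is below every claw leaf in the tree order. Using that ornaments of $\varrho$ are pairwise nested or disjoint, I will argue that $v_i\in\varrho(v_j)$ forces $\varrho(v_i)\subseteq\varrho(v_j)$: the two ornaments share $v_i$, so they are not disjoint, and the alternative containment $\varrho(v_j)\subseteq\varrho(v_i)$ would place $v_j$ into $\varrho(v_i)$ and thus force $v_i$ to be below $v_j$ in the tree order, contradicting the preceding observation. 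From $\varrho(v_i)\subseteq\varrho(v_j)$ it follows immediately that $\ell_i\le\ell_j$. Conversely, if $v_i\notin\varrho(v_j)$, then $\varrho(v_i)\not\subseteq\varrho(v_j)$, and the same observation rules out $\varrho(v_j)\subseteq\varrho(v_i)$, so the two ornaments are disjoint. Since $v_j$ is strictly below $v_i$ in the tree order, it precedes $v_i$ in $\lambda$, and the disjoint consecutive factors $\varrho(v_j)$ and $\varrho(v_i)$ must appear in that order in $\lambda$, giving $\ell_j<\ell_i$.

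The main technical hurdle is the careful structural analysis of ornaments in the broom: recording that claw leaves admit only singleton ornaments, that chain vertices sit strictly below all claw leaves in the tree order, and that the nested-versus-disjoint dichotomy interacts cleanly with the chain position of $v_j$. Once these facts are in hand, each implication of the lemma follows directly from the initial reduction of $\Inv(\Omega(\lambda,\varrho))$ to comparisons between the $\ell_u$'s.
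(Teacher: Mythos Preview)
Your proof is correct and follows essentially the same approach as the paper. Both define the index $\ell_u$ (the paper writes $\ell(u)$) as the position in $\lambda$ of the rightmost entry of $\varrho(n+1-u)$, reduce the inversion condition to $\ell_i\le\ell_j$, and then treat the two cases via the structure of the broom. Your argument in the case $j\ge k+1$ is somewhat more explicit than the paper's---you spell out the nesting argument that yields $\varrho(v_i)\subseteq\varrho(v_j)$ and the disjointness argument that forces $\varrho(v_j)\prec_\lambda\varrho(v_i)$, whereas the paper simply asserts the dichotomy ``either $n+1-i\in\varrho(n+1-j)$ or $\varrho(n+1-j)\prec_\lambda\varrho(n+1-i)$''---but the content is the same.
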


\begin{proof}
Let us write $\lambda=w_1\cdots w_n$. Note that $w_r=r$ for $1\leq r\leq n-k$ and that \[{\{w_{n-k+1},\ldots,w_{n}\}=\{n-k+1,\ldots,n\}}.\] For $m\in[n]$, let 
\[
\ell(m)=\max\{s\in[n]:w_s\in\varrho(n+1-m)\}.
\]
Then $\ell(m)$ is the unique element of $[n]$ such that $m\in B_{(\lambda,\varrho)}(w_{\ell(m)})$. It follows that $(i,j)$ is an inversion of $\Omega(\lambda,\varrho)$ if and only if $\ell(i)\leq\ell(j)$. 

If $j\leq k$, then $\varrho(n+1-i)=\{n+1-i\}$ and $\varrho(n+1-j)=\{n+1-j\}$, so $w_{\ell(i)}=n+1-i$ and $w_{\ell(j)}=n+1-j$. In this case, $\ell(i)\leq\ell(j)$ if and only if $(n+1-j,n+1-i)\in\Inv(\lambda)$. 

Now suppose that $j\geq k+1$. Then $n+1-j\leq_{\Broom_{k,n}} n+1-i$, so either $n+1-i\in\varrho(n+1-j)$ or $\varrho(n+1-j)\prec_{\lambda}\varrho(n+1-i)$. Hence, $\ell(i)\leq\ell(j)$ if and only if $n+1-i\in\varrho(n+1-j)$. 
\end{proof}

\begin{lemma}\label{lem:Omega_image}
Fix positive integers $k\leq n$. If $(\lambda,\varrho)\in\Theta(\Broom_{k,n}^\times)$, then \[\s(\Omega(\lambda,\varrho))\in\Delta_{\Weak(\mathfrak S_n)}(\wo(k,n)).\]     
\end{lemma}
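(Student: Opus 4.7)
My plan is to convert the claim into a statement about inversions, extract a forbidden configuration from Lemmas \ref{lem:stack_inversions} and \ref{lem:Omega_inversions}, and then derive a contradiction from the tree-geometry of $\Broom_{k,n}$. By the characterization \eqref{eq:Delta_inversions}, the desired membership $\s(\Omega(\lambda,\varrho))\in\Delta_{\Weak(\mathfrak S_n)}(\wo(k,n))$ is equivalent to the assertion that every inversion $(a,b)$ of $\s(\Omega(\lambda,\varrho))$ satisfies $b\leq k$. I will argue this by contradiction, assuming a hypothetical inversion $(a,b)$ with $b\geq k+1$.

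Applying Lemma \ref{lem:stack_inversions} to such a pair supplies an index $c\in[n]$ with $c>b$ and $b\prec_{\Omega(\lambda,\varrho)}c\prec_{\Omega(\lambda,\varrho)}a$. Reading off inversions of $\Omega(\lambda,\varrho)$ directly, this means that $(b,c)\notin\Inv(\Omega(\lambda,\varrho))$ (since $b<c$ and $b$ precedes $c$) while $(a,c)\in\Inv(\Omega(\lambda,\varrho))$ (since $a<c$ and $c$ precedes $a$). Because $c>b\geq k+1$, the second case of Lemma \ref{lem:Omega_inversions} applies to both pairs and translates these into the two key statements
\[
n+1-b\notin\varrho(n+1-c)\qquad\text{and}\qquad n+1-a\in\varrho(n+1-c).
\]

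The main step is then to contradict these using the structure of $\Broom_{k,n}^\times$, whose underlying forest is the chain $1<_\TT 2<_\TT\cdots<_\TT n-k$ together with the $k$ leaves $n-k+1,\ldots,n$ all attached to $n-k$. From $c>b\geq k+1$ we get $n+1-c<n+1-b\leq n-k$, so $n+1-c$ and $n+1-b$ both lie in the chain portion, while $n+1-a>n+1-b$ lies either in the chain (if $n+1-a\leq n-k$) or at one of the top leaves (if $n+1-a>n-k$). In either subcase the unique tree-path in $\Broom_{k,n}^\times$ from $n+1-c$ to $n+1-a$ passes through $n+1-b$: it is the consecutive segment $n+1-c,n+2-c,\ldots,n+1-a$ when $n+1-a\leq n-k$, and the segment $n+1-c,n+2-c,\ldots,n-k$ followed by a single edge to the leaf $n+1-a$ otherwise. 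Since $\varrho(n+1-c)$ is a connected subset of $\Broom_{k,n}^\times$ containing both endpoints $n+1-c$ and $n+1-a$, it must contain the tree-path between them, and so $n+1-b\in\varrho(n+1-c)$, contradicting the display above. I do not expect a real obstacle here; the only point requiring a moment of care is the leaf subcase, where the path leaves the chain at $n-k$ before taking one additional step.
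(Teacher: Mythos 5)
Your proof is correct. The setup is identical to the paper's: reduce via \eqref{eq:Delta_inversions} to bounding the second coordinate of inversions of $\s(\Omega(\lambda,\varrho))$, assume a bad inversion $(a,b)$ with $b\geq k+1$, invoke \cref{lem:stack_inversions} to produce the witness $c$ with $b<c$ and $b\prec_{\Omega(\lambda,\varrho)}c\prec_{\Omega(\lambda,\varrho)}a$, and then translate via the second case of \cref{lem:Omega_inversions}. Where you diverge is in which pair of inversion facts you feed into \cref{lem:Omega_inversions} and how you extract the contradiction. The paper reads off that both $(a,b)$ and $(a,c)$ are inversions of $\Omega(\lambda,\varrho)$, obtains $n+1-a\in\varrho(n+1-b)\cap\varrho(n+1-c)$, invokes the nesting axiom for ornamentations to get $\varrho(n+1-b)\subseteq\varrho(n+1-c)$, and then pushes this through the function $\ell$ (the last position in $\lambda$ touching a given ornament) to contradict $b\prec_{\Omega(\lambda,\varrho)}c$. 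You instead read off that $(a,c)$ is and $(b,c)$ is not an inversion, giving $n+1-a\in\varrho(n+1-c)$ and $n+1-b\notin\varrho(n+1-c)$, and then contradict these by observing that the unique tree-path in $\Broom_{k,n}^\times$ from $n+1-c$ to $n+1-a$ passes through $n+1-b$, so connectivity of the single ornament $\varrho(n+1-c)$ forces $n+1-b\in\varrho(n+1-c)$. Your route is arguably more elementary: it uses only the connectivity of one ornament plus the explicit geometry of the broom, and does not need the nesting axiom or the auxiliary function $\ell$. The trade-off is a small case split on whether $n+1-a$ lies in the chain or at a leaf, which you handle correctly; the paper's route avoids that split but introduces more machinery. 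Both are valid.
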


\begin{proof}
In light of \eqref{eq:Delta_inversions}, we must show that every inversion $(a,b)$ of $\s(\Omega(\lambda,\varrho))$ satisfies $b\leq k$. Thus, assume by way of contradiction that there exists $(a,b)\in\Inv(\s(\Omega(\lambda,\varrho)))$ with $b\geq k+1$. According to \cref{lem:stack_inversions}, there exists $c\in[n]$ such that $b<c$ and $b\prec_{\Omega(\lambda,\varrho)}c\prec_{\Omega(\lambda,\varrho)}a$. Let $\ell(b)$ and $\ell(c)$ be the unique indices such that $b\in B_{(\lambda,\varrho)}(w_{\ell(b)})$ and $c\in B_{(\lambda,\varrho)}(w_{\ell(c)})$. The pairs $(a,b)$ and $(a,c)$ are both inversions of $\Omega(\lambda,\varrho)$, so we can appeal to \cref{lem:Omega_inversions} to find that $n+1-a\in\varrho(n+1-b)$ and $n+1-a\in\varrho(n+1-c)$. This implies that the ornaments $\varrho(n+1-b)$ and $\varrho(n+1-c)$ are nested; since $n+1-c\leq_{\Broom_{k,n}} n+1-b$, we must have $\varrho(n+1-b)\subseteq\varrho(n+1-c)$. It follows that \[\{u\in[n]:b\in A_\varrho(u)\}\subseteq \{u\in[n]:c\in A_\varrho(u)\}.\] Consequently, $\ell(b)\leq\ell(c)$. This implies that $c\prec_{\Omega(\lambda,\varrho)} b$, which is our desired contradiction. 
\end{proof}

\cref{lem:Omega_image} tells us that we actually have a map $\Omega\colon\Theta(\Broom_{k,n}^\times)\to\s^{-1}(\Delta_{\Weak(\mathfrak S_n)}(\wo(k,n)))$. In light of \cref{prop:isomorphism}, the following proposition implies \cref{thm:stack-sorting}. 

\begin{proposition}
Fix positive integers $k\leq n$. The map \[\Omega\colon\Theta(\Broom_{k,n}^\times)\to\s^{-1}(\Delta_{\Weak(\mathfrak S_n)}(\wo(k,n)))\] is a poset isomorphism.  
\end{proposition}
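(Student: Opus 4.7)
The plan is to show that $\Omega$ is an order-preserving bijection whose inverse is also order-preserving; since both posets are finite, this yields a poset isomorphism. The cornerstone of the argument is \cref{lem:Omega_inversions}, which characterizes $\Inv(\Omega(\lambda, \varrho))$ bijectively in terms of the data $(\Inv(\lambda), \varrho)$.

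I would first handle injectivity and order-preservation simultaneously. Since every $\lambda \in \LL(\Broom_{k,n}^\times)$ must begin with $1, 2, \ldots, n-k$ (the chain portion of $\TT^\times$), all of its inversions lie among the claw leaves $\{n-k+1, \ldots, n\}$; and since $\varrho(m) = \{m\}$ is forced for each claw leaf $m \geq n-k+1$ (these vertices being maximal in $\TT^\times$), the ornamentation is determined by its values on the chain vertices $\{1, \ldots, n-k\}$. \cref{lem:Omega_inversions} then shows that the inversions of $\Omega(\lambda, \varrho)$ with $j \leq k$ encode $\Inv(\lambda)$, while those with $j \geq k+1$ encode the values $\varrho(m)$ for chain vertices $m$. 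Hence $\Inv(\Omega(\lambda, \varrho)) \subseteq \Inv(\Omega(\lambda', \varrho'))$ holds if and only if $\lambda \leq \lambda'$ and $\varrho \leq \varrho'$, which establishes that $\Omega$ is an order-embedding and in particular injective.

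For surjectivity, given $\sigma \in \s^{-1}(\Delta_{\Weak(\mathfrak S_n)}(\wo(k,n)))$, I would construct a preimage by inverting the characterization in \cref{lem:Omega_inversions}: set $\Inv(\lambda) = \{(u,v) : n-k+1 \leq u < v \leq n,\ (n+1-v, n+1-u) \in \Inv(\sigma)\}$, which is realized by a unique linear extension of $\TT^\times$ since the prescribed inversions lie among pairwise incomparable leaves; set $\varrho(m) = \{m\} \cup \{u : m < u \leq n,\ (n+1-u, n+1-m) \in \Inv(\sigma)\}$ for each chain vertex $m \leq n-k$; and set $\varrho(m) = \{m\}$ for each claw leaf $m$. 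Once we verify that $(\lambda, \varrho) \in \Theta(\Broom_{k,n}^\times)$, the identity $\Omega(\lambda, \varrho) = \sigma$ follows immediately by comparing inversion sets via \cref{lem:Omega_inversions}.

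The main obstacle is this verification: we must check the ornamentation axioms (each $\varrho(m)$ induces a connected subgraph of $\TT^\times$ with minimum $m$, and any two ornaments are either nested or disjoint) together with the consecutive-factor condition (each $\varrho(m)$ occupies consecutive positions in $\lambda$). The hypothesis $\s(\sigma) \in \Delta_{\Weak(\mathfrak S_n)}(\wo(k,n))$ translates via \cref{lem:stack_inversions} into the combinatorial statement that there is no triple $(a, b, c)$ with $a < b < c$, $b \geq k+1$, and $b \prec_\sigma c \prec_\sigma a$. I expect the plan to be to show that every potential failure of the required axioms produces exactly such a forbidden triple, yielding a contradiction. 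For instance, if ornaments $\varrho(m_1)$ and $\varrho(m_2)$ with $m_1 < m_2 \leq n-k$ share an element $u > m_2$ while $m_2 \notin \varrho(m_1)$, unpacking the definitions yields $n+1-u < n+1-m_2 < n+1-m_1$ together with $n+1-m_2 \prec_\sigma n+1-m_1 \prec_\sigma n+1-u$, which is a forbidden triple since $n+1-m_2 \geq k+1$. Parallel arguments---always isolating a forbidden triple---should handle the connectivity of each $\varrho(m)$ (both along the chain and between the chain and the claw leaves) as well as the consecutive-factor condition.
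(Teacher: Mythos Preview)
Your proposal is correct and follows essentially the same route as the paper. Both arguments use \cref{lem:Omega_inversions} to obtain injectivity and order-preservation (you package these together as an order-embedding, the paper treats them separately), then construct an explicit preimage $(\lambda,\varrho)$ of a given $\sigma$ and verify the ornamentation and consecutive-factor axioms by reducing each potential failure to a pattern $b\prec_\sigma c\prec_\sigma a$ with $a<b<c$ and $b\geq k+1$, which \cref{lem:stack_inversions} and \eqref{eq:Delta_inversions} forbid. One small remark: the consecutive-factor condition actually follows directly from the definition of $\lambda$ without needing a forbidden triple (as the paper's proof shows), but your approach would still go through there.
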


\begin{proof}
Let us first argue that $\Omega$ is a bijection. It is immediate from \cref{lem:Omega_inversions} that $\Omega$ is injective. To prove surjectivity, let us choose an arbitrary $\sigma=\sigma(1)\cdots\sigma(n)\in\s^{-1}(\Delta_{\Weak(\mathfrak S_n)}(\wo(k,n)))$. Let $v_1,\ldots,v_k$ be the elements of $\{n-k+1,\ldots,n\}$, listed so that 
\begin{equation}\label{eq:listed}
n+1-v_k\prec_{\sigma}\cdots\prec_\sigma n+1-v_1.
\end{equation} 
Let $\lambda=12\cdots(n-k)v_1\cdots v_k$. Define an ornamentation $\varrho$ of $\Broom_{k,n}^\times$ as follows. For $1\leq j \leq k$, we must define $\varrho(n+1-j)=\{n+1-j\}$. For $k+1\leq j\leq n$, let \[\varrho(n+1-j)=\{n+1-j\}\cup\{n+1-i:(i,j)\in\Inv(\sigma)\}.\] 

Let us first show that the map $\varrho$ is indeed an ornamentation. Let $j\in[n]$; we will prove that $\varrho(n+1-j)$ induces a connected subgraph of $\Broom_{k,n}$. If $1\leq j\leq k$, then this is obvious because $\varrho(n+1-j)=\{n+1-j\}$. Now assume $k+1\leq j\leq n$. Suppose $i$ and $i'$ are vertices satisfying 
\begin{equation}\label{eq:jii'}
n+1-j<_{\Broom_{k,n}} n+1-i<_{\Broom_{k,n}} n+1-i'
\end{equation} and $n+1-i'\in\varrho(n+1-j)$; we must show that $n+1-i\in\varrho(n+1-j)$. The fact that $n+1-i'\in\varrho(n+1-j)$ implies that $(i',j)\in\Inv(\sigma)$. It follows from \eqref{eq:jii'} that $i'<i<j$ and that $i\geq k+1$. Since $\s(\sigma)\in\Delta_{\Weak(\mathfrak S_n)}(\wo(k,n))$, we know that $(i',i)\not\in\Inv(\s(\sigma))$. According to \cref{lem:stack_inversions}, we cannot have $i\prec_\sigma j\prec_\sigma i'$. But we know that $j\prec_{\sigma}i'$, so we must have $j\prec_{\sigma} i$. This shows that $(i,j)\in\Inv(\sigma)$, so $n+1-i\in\varrho(n+1-j)$, as desired. 

We now must show that for all vertices $j,j'\in[n]$, the sets $\varrho(j)$ and $\varrho(j')$ are either nested or disjoint. Suppose $j,j'\in[n]$ are such that $j\leq j'$ and $\varrho(n+1-j)\cap\varrho(n+1-j')\neq\emptyset$; we will show that $\varrho(n+1-j)\subseteq\varrho(n+1-j')$. If $1\leq j\leq k$, then this is immediate since $\varrho(n+1-j)$ is a singleton set. Now suppose $j\geq k+1$. Then $n+1-j\in\varrho(n+1-j')$, so $(j,j')\in\Inv(\sigma)$. It follows that if $i\in[n]$ is such that $(i,j)\in\Inv(\sigma)$, then $(i,j')\in\Inv(\sigma)$. Hence, $\varrho(n+1-j)\subseteq\varrho(n+1-j')$. This completes the proof that $\varrho$ is an ornamentation. 

If we can show that $(\lambda,\varrho)\in\Theta(\Broom_{k,n}^\times)$, then it will follow from \cref{lem:Omega_inversions} that $\Omega(\lambda,\varrho)=\sigma$, which will prove that $\Omega$ is surjective. Thus, we must show that for every $j\in[n]$, the elements of $\varrho(n+1-j)$ form a consecutive factor of $\lambda$. If $1\leq j\leq k$, then this is trivial since $\varrho(n+1-j)$ is a singleton set, so assume $k+1\leq j\leq n$. Referring to the definition of $\lambda$, we see that we must demonstrate that the elements of the set $\Gamma=\{n+1-i:1\leq i\leq k\text{ and }(i,j)\in\Inv(\sigma)\}$ form a prefix of the word $v_1\cdots v_k$. Thus, suppose $1\leq i<i'\leq k$ and $v_{i'}\in\Gamma$. Then $(n+1-v_{i'},j)\in\Inv(\sigma)$, so $j\prec_\sigma n+1-v_{i'}\prec_{\sigma} n+1-v_i$ (by \eqref{eq:listed}). It follows that $(n+1-v_i,j)\in\Inv(\sigma)$, so $v_i\in\Gamma$. 

We have shown that $\Omega$ is bijective. It is a straightforward consequence of \cref{lem:Omega_inversions} that $\Omega$ and $\Omega^{-1}$ are order-preserving. 
\end{proof}

\section{Future Directions}\label{sec:conclusion}

In \cref{thm:semidistributive,thm:trim}, we characterized the operahedron lattices that are semidistributive and the operahedron lattices that are trim. We also mentioned in \cref{rem:semidistrim} that an operahedron lattice is semidistrim if and only if it is semidistributive. It is natural to investigate other structural properties of operahedron lattices. 

As mentioned in \cref{sec:intro}, operahedra are special examples of poset associahedra. Laplante-Anfossi~\cite{laplante2022diagonal} also noted that they are the \emph{graph associahedra} of the line graphs of trees. Graph associahedra were introduced by Carr and Devadoss \cite{CarrDevadoss} and further popularized by Postnikov \cite{Postnikov_GP} as examples of \emph{generalized permutohedra}. It would be interesting to further investigate which poset associahedra or graph associahedra have $1$-skeletons that can be oriented in some natural way to produce lattices (say, using the realizations in \cite{sack2023realization} or \cite{devadoss2009realization}). Barnard and McConville already have work along these lines \cite{BarnardMcConville}, but there are further avenues worth pursuing. For example, one could consider graph associahedra of particular families of graphs such as block graphs or chordal graphs. 

In order to understand the operahedron lattice of a tree $\TT$, we first had to introduce the ornamentation lattice $\OO(\TT^\times)$. While ornamentation lattices are generally less complicated than operahedron lattices, it could still be interesting to consider ornamentation lattices in their own right by asking more refined questions than those asked here about operahedron lattices. For example, ornamentation lattices could provide a fruitful landscape for generalizing results about Tamari lattices---which are ornamentation lattices of chains---such as those in \cites{BarnardComplex,Bostan,Clement,Ceballos1,ChatelPons,DefantMeeting,FishelNelson,Hong}.

It is natural to ask if there are even stronger connections between operahedron lattices and stack-sorting. In \cref{thm:stack-sorting}, we found an isomorphism between the operahedron lattice of a broom and the subposet of $\Weak(\mathfrak S_{n})$ consisting of the stack-sorting preimages of a certain set of permutations. Are there families of trees more general than brooms for which similar isomorphisms exist?

\section*{Acknowledgments}
Colin Defant was supported by the National Science Foundation under Award No.\ 2201907 and by a Benjamin Peirce Fellowship at Harvard University. Andrew Sack was supported by the National Science Foundation
Graduate Research Fellowship Program under Grant No. DGE-2034835 and National Science Foundation Grants No. DMS-1954121 and DMS-2046915. Any
opinions, findings, and conclusions or recommendations expressed in this material
are those of the authors and do not necessarily reflect the views of the National
Science Foundation.

\bibliographystyle{amsalpha}
\bibliography{main}

\end{document}